\newcommand{\vrum}{\vspace{0.1cm}}
\newcommand{\masfR}{\mathsf R}
\newcommand{\aaf}{A_a^{\varphi,\phi}}
 \def\cN{\mathcal{N}}
\newcommand{\field}[1]{\mathbb{#1}}
\newcommand{\bR}{\field{R}}        
\newcommand{\bN}{\field{N}}        
\def\rdd{{\bR^{2d}}}
\def\R{\mathbb{R}}
\def\N{\mathbb{N}}
\def\sch{\mathcal{S}}
\def\Ren{\mathbb{R}^d}
\def\la{\langle}
\def\ra{\rangle}
\def\rd{\bR^d}
\def\cC{\mathcal{C}}
\def\Ren{\mathbb{R}^d}
\def\Renn{\mathbb{R}^{2d}}
\def\cD{\mathcal{D}}
\def\cE{\mathcal{E}}
\def\cS{\mathcal{S}}
\def\Nn{\mathbb{N}^{d}}
\def\Om{\Omega}
\def\o{\omega}
\def\cM{\mathcal{M}}
\def\inv{^{-1}}
\def\lrd{L^2(\rd)}
\def\Mmpq{M_m^{p,q}}
\def\phas{(x,\omega )}
\def\ni{\noindent}
\def\a{\alpha}                    
\newcommand{\tf}{time-frequency}
\newcommand{\stft}{short-time Fourier transform}
\def\Fur{\mathcal{F}}
\def\be{\begin{equation}}
\def\ee{\end{equation}}
\def\bena{\begin{eqnarray*}}
\def\ena{\end{eqnarray*}}
\def\mR{\mathbb{R}} \def\mN{\mathbb{N}}  \def\mC{\mathbb{C}}
\def\f{\varphi}
\def\gaw{A_a^{\f_1,\f_2}}
\def\phas{(x,\omega )}
\newcommand{\modsp}{modulation space}
\def\Nn{\mathbb{N}^{d}}
\def\N{\mathbb{N}}
\begin{document}

\title*{The Grossmann-Royer transform, Gelfand-Shilov spaces, and continuity properties of localization operators on modulation spaces}
\titlerunning{The Grossmann-Royer transform,...}

\author{Nenad Teofanov}

\institute{Nenad Teofanov \at Department of Mathematics and Informatics,
University of Novi Sad, Novi Sad, Serbia, \email{nenad.teofanov@dmi.uns.ac.rs}}

%
%

\maketitle

\abstract{
This paper offers a review of the results concerning  localization operators on modulation spaces, and related topics. However, our approach, based on the Grossmann-Royer transform, gives a new insight and (slightly) different proofs.
We define the Grossmann-Royer transform as interpretation of the  Grossmann-Royer operator in the weak sense.
Although such transform  is essentially the same as the cross-Wigner distribution,
the proofs of several known results are simplified when  it is used instead of other
time-frequency representations.
Due to the importance of their role in applications when dealing with ultrafast decay properties in phase space,
we give a detailed account on the Gelfand-Shilov spaces and their dual spaces,
and extend the  Grossmann-Royer transform and its properties in such context.
Another family of spaces, modulation spaces, are recognized as appropriate background
for time-frequency analysis. In particular, the Gelfand-Shilov spaces are revealed
as  projective and inductive limits of modulation spaces.
For the continuity and compactness properties of localization operators we employ the norms in modulation spaces.
We define localization operators
in terms of the Grossmann-Royer transform, and show that such definition coincides  with the
usual definition based on the short-time Fourier transform.
}

\section{Introduction}

The Grossmann-Royer operators originate from the problem of
physical interpretation of the Wigner distribution, see \cite{Grossmann, Royer}.
It is shown that the Wigner distribution is related to the expectation values of such operators
which describe reflections about  the phase space point.
Apart from this physically plausible interpretation of the Grossmann-Royer operators (sometimes called the parity operators),
they are closely related to the Heisenberg operators, the well known objects from quantum mechanics as  "quantized" variants of phase space translations. We refer to \cite{deGosson2011, deGosson2016, deGosson2017} for such relation, basic properties
of those operators, and the
definition of the cross-Wigner distribution $ W(f,g)$ in terms of  the Grossmann-Royer operator $R f $.

As it is observed in de Gosson \cite{deGosson2011} it is a pity that the Grossmann-Royer operators are not universally known.
One of the aims of this paper
is to promote those operators by defining the Grossmann-Royer transforms $R_g f$ as their weak interpretation.
The Grossmann-Royer transform is in fact the cross-Wigner distribution multiplied by $2^d$, see Definition \ref{transformacije}, Lemma  \ref{GRandRelatives}
and \cite[Definition 12]{deGosson2017}. Thus all the results valid for  $ W(f,g)$ can be formulated in terms of  $R_g f$.
Nevertheless, due to its physical meaning, we treat the Grossmann-Royer transform  as one of the time-frequency representations and perform a detailed exposition of its properties.
Our calculations confirm the remark from \cite[Chapter 8.3]{deGosson2011}
accorindg to which the use of $R_g f$  (instead of $V_g f, W(f,g)$ and $A(f,g),$ see  Definition \ref{transformacije})
"allows one to considerably simplify many proofs".

One aim of this paper is to give the interpretation of localization operators by the means of
the Grossmann-Royer transforms,  and to reveal its role in time-frequency analysis, but
we believe that, due to its intrinsic physical interpretation (as reflections in phase space),
the study of the Grossmann-Royer transform is of an  interest by itself.

To that end, we extend the definition of the Grossmann-Royer transform from the duality between the Schwartz space $ \mathcal{S} (\mR^d) $ and its dual space of tempered distributions $ \mathcal{S}' (\mR^d) $ to the whole range of Gelfand-Shilov spaces and their dual spaces. According to \cite{NR}, such spaces are "better adapted to the study of the problems of Applied Mathematics",
since they describe simultaneous estimates of exponential type for a function and its Fourier transform.
Here we give a novel (albeit expected)
characterization of Gelfand-Shilov spaces in terms of the Grossmann-Royer transform.

Gelfand-Shilov spaces can be described as the projective and the inductive limits of modulation spaces,
which are recognized as the right spaces for time-frequency analysis \cite{Gro}.
The modulation space norms traditionally  measure
the joint time-frequency distribution of $f\in \sch '$,
since their weighted versions are usually equipped with weights of at most polynomial growth at infinity (\cite{F1}, \cite[Ch.~11-13]{Gro}). The general approach to modulation spaces introduced already in  \cite{F1}
includes the weights of sub-exponential growth (see (\ref{BDweight})).
However, the study of ultra-distributions requires  the use of  the weights of exponential or even superexponential growth, cf. \cite{CPRT1, Toft-2017}. In our investigation of localization operators it is essential to use the (sharp) convolution estimates for modulation spaces, see Section \ref{Modulation Spaces}.

Localization operators are closely related to the above mentioned cross-Wigner distribution. They were first introduced by Berezin in the study of general Hamiltonians satisfying the so-called Feynman inequality, within a
quantization problem in quantum mechanics, \cite{Berezin71, Shif}.
Such operators and their modifications are also called Toeplitz or Berezin-Toeplitz operators,
anti-Wick operators and Gabor multipliers, see \cite{folland89,Toft2007b,Toft-2012}.
We do not intend to discuss the terminology here, and  refer to, e.g.
\cite{Englis} for the relation between Toeplitz operators and localization operators.

\par

In signal analysis, different localization techniques are used to describe signals which are as concentrated as possible in general regions of the phase space.
In particular, localization operators are related to localization technique developed by Slepian-Polak-Landau,
where  time and frequency are considered to be two separate spaces, see e.g. the survey article
\cite{Slepian}. A different construction is proposed by Daubechies in \cite{Daube88},
where time-frequency plane  is treated as one geometric whole (phase space).
That fundamental contribution which contains localization in phase space together with
basic facts on localization operators with references to applications in  optics and signal analysis,
initiated further study of the topic.
More precisely, Daubechies studied
localization operators $A_a ^{\varphi_1, \varphi_2} $ with Gaussian windows
$$\varphi_1 (x) = \varphi_2 (x) =  \pi^{-d/4} \text{exp} (-x^2/2), \;\;\; x\in \mathbb{R}^d, \;\;\;
\text{and with a radial symbol} \;\;\; a \in L^1 (\mathbb{R}^{2d} ).
$$
Such operators are named Daubechies' operators afterwards.
The eigenfunctions  of Daubechies' operators are $d-$dimensional Hermite functions
\begin{equation} \label{herm1}
H_k (x) = H_{k_1} (x_1)  H_{k_2} (x_2) \dots  H_{k_d} (x_d) = \prod_{j=1} ^d H_{k_j} (x_j), \;\;\;
x \in\bR^d, \;\;\; k \in\bN^d _0,
\end{equation}
and
$$
H_n (t) =  (-1)^n \pi ^{-1/4} (2^n n!)^{-1/2} \text{exp} (t^2/2) (\text{exp} (-t^2))^{(n)},
\;\;\; t \in \mathbb{R}, \;\;\; n = 0,1,\dots,
$$
and corresponding eigenvalues can be explicitly calculated.
Inverse problem for a simply connected localization domain $ \Omega $ is studied in
\cite{AbreuDorfler12} where it is proved that if one of the eigenfunctions of Daubechies' operator
is a Hermite function, then $ \Omega $ is a disc centered at the origin.
Moreover, the Hermite functions belong to
test function spaces for ultra-distributions, both in non-quasianalytic and in quasianalytic case,
and give rise to important representation theorems, \cite{L06}.
In our analysis this fact is used in Theorem \ref{kernelteorema}.

The eigenvalues and eigenfunctions in signal analysis are discussed in \cite{RamTop93},
where the localization operators of the form
$  \langle L_{\chi_\Omega} f, g \rangle = \iint_\Omega W(f,g), $ were observed.
Here $\Omega $ is a quite general open and bounded subset in $ \mathbb{R}^2 $,
$\chi _\Omega $ is its characteristic function, and
$ W(f,g)$ is the cross-Wigner transform (see Definition \ref{transformacije}).
In \cite{RamTop93} it is proved that the eigenfunctions of $ L_{\chi_\Omega} $ belong to the Gelfand-Shilov space
$ {\mathcal S}^{( 1)} (\mathbb{R}^d)$, see Section \ref{Gelfand} for the definition.

\par

Apart from applications in signal analysis,
the study of localization operators is motivated (and in fact initiated by Berezin)
by the problem of quantization.
An exposition of different quantization theories and connection between localization operators and Toeplitz operators is
given in e.g. \cite{Englis, Englis02}.
The quantization problem is in the background of the study of localization operators on
$ L^p (G)$, $ 1\leq p \leq \infty $, where $G$ is a locally compact group,
see \cite{BOW, Wong02}, where one can find product formulae and Schatten-von Neumann properties of localization operators in that context.

In the context of time-frequency analysis,  the groundbreaking contribution is given by Cordero and Gr\"ochenig, \cite{CG02}. Among other things, their results emphasize the role of modulation spaces in the study of localization operators.
Since then, localization operators in the context of modulation spaces and Wiener-amalgam spaces were studied
by many authors, cf. \cite{FeiNowak2003, CorGro2006, Toft2007a, Toft2007b, Weisz}.
See also the references given there.

In this paper we rewrite the definition of localization operators and Weyl pseudodifferential operators in terms of the
Grossmann-Royer transform and recover the main relations between them. Thereafter we
give an exposition of results concerning the continuity of localization operators on modulation spaces with polynomial weights, and their compactness properties both for polynomial and exponential weights.

\subsection{The content of the paper}

This paper is mainly a review of the results on the topic of localization operators on modulation spaces with some new insights and (slightly) different proofs.
More precisely, we propose a novel approach  based on the Grossmann-Royer transform $R_g f$,
defined in  Section \ref{Grossmann-Royer} as the weak interpretation of the Grossmann-Royer operator. Therefore $R_g f$
is  the cross-Wigner distribution in disguise (Definition \ref{transformacije}
and \cite[Definition 12]{deGosson2017}), so all the results valid for  $ W(f,g)$ can be reformulated in terms of  $R_g f$.
In fact,  $R_g f$ is also closely related to other time-frequency representations,
$V_g f$ and $A(f,g),$ see Lemma  \ref{GRandRelatives}.
We  perform a  detailed exposition of the properties of
the Grossmann-Royer transform, such as marginal densities and the weak uncertainty principle, Lemma \ref{marginal} and Proposition \ref{uncertainty}, respectively.
The relation between the Grossmann-Royer transform and the Heisenberg-Weyl (displacement) operators via the symplectic Fourier transform is revealed in Proposition \ref{symplectic}.

We start Section \ref{Gelfand} with a motivation for the study of Gelfand-Shilov spaces, and proceed with a
review of their main properties, including their different characterizations (Theorem \ref{GS-characterization}),
the kernel theorem (Theorem \ref{kernelteorema}), and a novel (albeit expected)
characterization in terms of the Grossmann-Royer transform (Theorem \ref{nec-suf-cond}).

 In Section \ref{Modulation Spaces} we first discuss weight functions, and then define modulation spaces as a subset of  $ (\cS^{1/2}_{1/2})' (\rd)$, the dual space of the Gelfand-Shilov space $  \cS^{1/2}_{1/2} (\rd)$, see Definition \ref{defmodnorm}. This section is an exposition of known results relevant for the rest of the paper.
 In particular, we recall that Gelfand-Shilov spaces can be described as the projective and the inductive limits of modulation spaces, Theorem \ref{modproerties} and Remark \ref{GSandmod}. We also recall the (sharp) convolution estimates for modulation spaces (Proposition \ref{mconvmp} and Theorem \ref{mainconvolution}) which will be used in the study of localization operators in Section \ref{localization}.

In Section \ref{localization} we rewrite the definition of localization operators and Weyl pseudodifferential operators in terms of the Grossmann-Royer transform (Definition \ref{locopdef} and Lemma \ref{locopsame}), and recover the main relations between them (Lemmas \ref{Weyl psido char} and \ref{Weyl connection lemma}). Thereafter we
give an exposition of known results concerning the continuity of localization operators on modulation spaces
(Subsection \ref{cont-prop}),
and their compactness properties when the weights are chosen to be polynomial or exponential (Subsection \ref{schvonneumann}).
As a rule, we only sketch the proofs, and use the Grossmann-Royer transform formalism whenever convenient.

Finally, in Section \ref{extensions} we point out some directions that might be relevant for future investigations. This includes product formulas,  multilinear localization operators, continuity over quasianalytic classes, and extension to quasi-Banach modulation spaces.

\subsection{Notation} We define
$xy=x\cdot y$,  the scalar product on $\Ren$. Given a vector
$x=(x_1,\dots,x_d)\in\rd$, the partial derivative with respect to
$x_j$ is denoted by  $\partial _j = \frac{\partial}{\partial x_j}$.
 Given a multi-index
$p=(p_1,\dots,p_d)\geq 0$, i.e., $p \in\bN^d_0$ and $p_j \geq 0$, we write
$\partial^p=\partial^{p_1}_1\cdots\partial^{p_d}_d$ and  $x^p =(x_1,\dots,x_d)^{(p_1,\dots,p_d)}=\prod_{i=1}^d x_i^{p_i}$.
We write $h\cdot |x|^{1/\a}=\sum_{i=1}^d h_i |x_i|^{1/\a_i}$. Moreover, for $p \in\bN^d_0$ and $\a \in\R^d_+$, we set $(p!)^\a=(p_1!)^{\a_1}\dots (p_d!)^{\a_d}$, while as standard $p!=p_1!\dots p_d!$.
In the sequel, a real number $r\in\R_+$ may play the role of the vector
with constant components $r_j=r$, so for $\a\in\R^d_+$, by writing $\a>r$ we mean $\a_j>r$ for all $j=1,\dots,d$.

The Fourier transform is normalized to be ${\hat   {f}}(\o)=\Fur f(\o)=\int f(t)e^{-2\pi i t\o}dt$.
We use the brackets $\la f,g\ra$ to denote the extension of the inner product $\la f,g\ra=\int f(t){\overline
{g(t)}}dt$ on $L^2(\Ren)$ to the dual pairing between a test function space $ \mathcal A $ and its dual $ {\mathcal A}' $:
$ \langle \cdot, \cdot \rangle  = $ $ _{{\mathcal A}'}\langle \cdot, \overline{\cdot} \rangle _{\mathcal A}.$

By $ \check{f} $ we denote the reflection $\check{f} (x) = f (-x),$
and $\langle \cdot \rangle ^s$  are polynomial weights
$$
 \la \phas \ra^s= \la z\ra^s=(1+x^2+\o^2)^{s/2},\quad
   z=(x,\o)\in\Renn, \,\quad s\in\bR ,\,
 $$
 and $\langle x \rangle = \langle 1 + |x|^2\rangle ^{1/2}$, when $ x
\in \mathbb{R}^d.$

Translation and modulation operators, $T$ and $M$ are defined by
\begin{equation} \label{trans-mod}
T_x f(\cdot) = f(\cdot - x) \;\;\; \mbox{ and } \;\;\;
 M_x f(\cdot) = e^{2\pi i x \cdot} f(\cdot), \;\;\; x \in \mathbb{R}^d.
$$
The following relations hold
$$
M_y T_x  = e^{2\pi i x \cdot y } T_x M_y, \;\;
 (T_x f)\hat{} = M_{-x} \hat f, \;\;
  (M_x f)\hat{} = T_{x} \hat f, \;\;\;
  x,y \in \mathbb{R}^d,  f,g \in L^2 (\mathbb{R}^d).
\end{equation}

The singular values
$\{s_k(L)\}_{k=1}^\infty$ of a compact operator $L\in B(L^2(\Ren))$ are the
eigenvalues of the positive self-adjoint operator $\sqrt{L^*L}.$ For $1\leq
p<\infty$, the Schatten class $S_p$ is the space of all compact operators
whose singular values lie in $l^p$. For  consistency,  we define
$S_\infty : =B(L^2(\Ren))$ to be the space of bounded operators on
$\lrd $. In particular, $S_2$ is the space of Hilbert-Schmidt
operators, and
$S_1$ is the space of trace class operators.

Throughout the paper, we shall use the notation $A\lesssim B$ to
indicate $A\leq c B$ for a suitable constant $c>0$, whereas $A
\asymp B$ means that $c\inv A \leq B \leq c A$ for some $c\geq 1$. The
symbol $B_1 \hookrightarrow B_2$ denotes the continuous and dense embedding of
the topological vector space $B_1$ into $B_2$.

\subsection{Basic spaces} \label{basicspaces}

In general a weight $ w(\cdot) $ on $ \mathbb{R}^{d}$ is a non-negative and continuous function.
By $ L^p _w (\mathbb{R}^d)$, $ p \in [1,\infty] $ we denote the weighted Lebesgue space defined by the norm
$$
\| f \|_{L^p _w } = \| f w \|_{L^p} = \left ( \int |f(x)|^p w(x) ^p dx \right )^{1/p},
$$
with the usual modification when $ p=\infty$.

Similarly, the weighted  mixed-norm space $ L^{p,q} _w (\mathbb{R}^{2d})$, $ p,q \in [1,\infty] $
consists of (Lebesgue) measurable functions on $ \mathbb{R}^{2d}$ such that
$$
\| F \|_{ L^{p,q} _w} = \left ( \int_{ \mathbb{R}^{d}} \left ( \int_{ \mathbb{R}^{d}}
| F(x,\omega)|^p w(x,\omega) ^p dx \right )^{q/p} d\omega \right )^{1/q} < \infty.
$$
where $ w(x,\omega ) $ is a weight on $ \mathbb{R}^{2d}$.

In particular, when $ w(x,\omega) = \langle  x
\rangle ^t \langle \omega \rangle ^s, $ $s,t\in \mathbb{R},$ we use
the notation $ L^{p,q} _w (\mathbb{R}^{2d}) = L^{p,q} _{t,s}
(\mathbb{R}^{2d})$, and when $ w(x) = \langle  x \rangle ^t $, $t\in
\mathbb{R},$ we use the notation $ L^p _t (\mathbb{R}^d) $ instead.

The space of smooth functions with compact support on
$\rd$  is  denoted by $\cD(\rd)$. The Schwartz class is denoted by  $\sch(\Ren)$, the space of
tempered distributions by  $\sch'(\Ren)$.
Recall,  $\sch(\Ren)$ is a Fr\'echet space, the projective limit of spaces $\sch_p (\Ren)$,
$ p \in\bN _0,$ defined by the norms:
$$
\| \varphi \|_{\sch_p} = \sup_{|\alpha| \leq p} (1+ |x|^2)^{p/2} | \partial^\alpha \phi (x)| <\infty, \;\;\;
p\in\bN _0.
$$
Note that $\cD(\rd) \hookrightarrow \sch(\Ren)$.

The spaces ${\mathcal S} (\mathbb{R}^d) $ and  $ {\mathcal S}' (\mathbb{R}^d) $
play an important role in various applications since
the Fourier transform is a topological isomorphism between $
{\mathcal S} (\mathbb{R}^d) $ and $ {\mathcal S} (\mathbb{R}^d) $ which extends to a continuous linear transform
from $ {\mathcal S}' (\mathbb{R}^d) $ onto itself.

In order to deal with particular problems in applications
different generalizations of the Schwartz type spaces were proposed.
An example is given by the Gevrey classes given below. Gelfand-Shilov spaces are another important example, see  Section
\ref{Gelfand}.

By $\Omega$ we denote an open set in $ \bR^d,$ and $ K \Subset \Omega $ means that $K$ is  compact subset in
$ \Omega.$ For $ 1 < s < \infty $ we define the Gevrey class $ G^s (\Omega) $ by
$$
G^s (\Omega) = \{ \phi \in C^\infty(\Omega) \;\; | \;\; (\forall K \Subset \Omega ) (\exists C > 0)
( \exists h > 0 ) \;\;\;
\sup_{x \in K} \left | \partial^\alpha  \phi (x) \right| \leq C h^{|\alpha|} |\alpha| ! ^s
\}.
$$
We denote by $ G_0 ^s (\Omega) $  a subspace of $ G^s (\Omega) $ which consists of compactly supported functions.
We have
$ \displaystyle {\mathcal A} (\Omega) \hookrightarrow \cap_{s > 1} G^s (\Omega) $ and
$ \displaystyle \cup_{s \geq 1} G^s (\Omega) \hookrightarrow C^\infty (\Omega),$
where $ {\mathcal A} (\Omega)  $ denotes the space of analytic functions defined by
$$
{\mathcal A} (\Omega) = \{  \phi \in C^\infty(\Omega) \;\; | \;\; (\forall K \Subset \Omega ) (\exists C > 0)
( \exists h > 0 ) \;\;\;
\sup_{x \in K} \left | \partial^\alpha   \phi(x) \right| \leq C h^{|\alpha|} |\alpha| ! \}.
$$

\par

We end this section with test function spaces for the spaces of ultradistributions due to Komatsu \cite{K}.
In fact, both the Gevrey classes and the Gelfand-Shilov spaces can be viewed as particular cases of Komatsu's construction.
Let there be given an open set
$\Omega \subset \bR^d,$
and a sequence $ (N_q)_{q \in \mN_0} $
which satisfies (M.1) and (M.2), see Section \ref{Gelfand}.
The function $ \phi \in C^{\infty} (\Omega) $ is called
{\em ultradifferentiable} function of Beurling class $ (N_q) $ (respectively of Roumieu class  $ \{ N_q \} $)
if, for any $ K \subset\subset\Omega $
and for any $h > 0$  (respectively for some $ h >0 $),
$$
\| \phi \|_{N_q, K, h} = \sup_{x \in K, \alpha \in \Nn _0} \frac{|\partial^\alpha \phi (x)|}{h^{|\alpha|} N_{|\alpha|}}
< \infty.
$$
We say that $ \phi \in  \cE^{N_q, K, h } (\Om) $ if $ \| \phi \|_{N_q, K, h} < \i $ for given $K$ and $h>0$,
and define the following spaces of ultradifferentiable test functions:
$$
\cE ^{(N_q)} (\Om)
:= {\rm proj} \lim_{ K \subset\subset\Om} {\rm proj} \lim_{h \rightarrow 0} \cE^{N_q, K, h } (\Om);
$$
$$
\cE ^{\{N_q\}} (\Om) : = {\rm proj} \lim_{ K \subset\subset\Om}  {\rm ind} \lim_{h \rightarrow \infty}
\cE^{N_q, K, h } (\Om).
$$

\section{The Grossmann- Royer transform} \label{Grossmann-Royer}

In this section we consider  the Grossmann-Royer operator $ R f  (x,\omega) $ and introduce the corresponding transform. We refer to \cite{deGosson2011} for the basic properties of $ R f (x,\omega) $ and its relation to the Heiseberg-Weyl operator. In fact,
the Grossmann-Royer operator can be viewed as a kind of reflection (therefore we use the letter $R$ to denote it),
more precisely the conjugate of a reflection operator by a Heisengerg-Weyl operator, and the product formula (\cite[Proposition 150]{deGosson2011}) reveals that the product of two reflections is a translation. Moreover, it satisfies the symplectic covariance property, \cite[Proposition 150]{deGosson2011}.

Those physically plausible interpretations motivate us to define the corresponding transform, which is just the cross-Wigner distribution in disguise. As we shall see, in many situations it is more convenient to choose the the Grossmann-Royer transform
instead of some of its relatives, the cross-Wigner distribution, the cross-ambiguity function and the short-time Fourier transform.

The  Grossmann-Royer operator $ R : L^2 (\mathbb{R}^d) \rightarrow L^2 (\mathbb{R}^{2d})  $ is given by
\be \label{GROp}
R f (x,\omega) =  R  (f(t) )(x,\omega)  = e^{4\pi i\omega (t-x)} f(2x - t), \;\;\; f \in L^2 (\mathbb{R}^d),\; x,\omega \in \mathbb{R}^d.
\ee

\begin{definition} \label{transformacije}
Let there be given $f, g \in L^2 (\mathbb{R}^d).$ The
Grossmann-Royer transform of $f$ and $g$ is given by
\be \label{GRT}
R_g f (x,\omega) = R (f,g) (x,\omega)  = \langle R f, g \rangle   = \int e^{4\pi i  \omega(t-x)} f(2x- t) \overline{ g(t)} dt, \;\;\; x,\omega \in \mathbb{R}^d.
\ee
The short-time Fourier transform of
$ f  $ with respect to the window $g$  is given by
\be \label{GT}
V_g f (x,\omega) = \int e^{-2\pi i t \omega} f(t)  \overline{ g(t-x)}dt, \;\;\; x,\omega \in \mathbb{R}^d.
\ee
The cross--Wigner distribution of  $f$ and $g$ is
\be \label{WD}
W(f,g) (x,\omega)  = \int e^{-2\pi i \omega t} f(x+ \frac{t}{2})
\overline{ g(x- \frac{t}{2})} dt, \;\;\; x,\omega \in \mathbb{R}^d,
\ee
and the cross--ambiguity function of  $f$ and $g$ is
\be \label{FWT}
A (f,g) (x, \omega) = \int e^{-2\pi i \omega t}
f(t+ \frac{x}{2}) \overline{ g(t- \frac{x}{2})} dt, \;\;\; x,\omega \in \mathbb{R}^d.
\ee
\end{definition}

Note that $ R  (f(t) )(x,\omega)  = e^{-4\pi i\omega x}  M_{2\omega}(T_{2x} f)^{\check{}} (t) $ so that
\be \label{GRT-trans-mod}
R_g f (x,\omega)  =  e^{-4\pi i\omega x} \langle M_{2\omega} (T_{2x} f)^{\check{}} , \overline{g} \rangle.
\ee

\begin{lemma} \label{GRandRelatives}
Let  $f, g \in L^2 (\mathbb{R}^d).$ Then we have:
\begin{eqnarray*}
W(f,g) (x,\omega)  & = & 2^d R_g f (x,\omega), \\
V_g f (x,\omega) & = &  e^{-\pi i x \omega} R_{\check{g}} f (\frac{x}{2},\frac{\omega}{2}),
\\
A(f,g) (x,\omega) & = & R_{\check{g}} f (\frac{x}{2},\frac{\omega}{2}), \;\;\;
x,\omega \in \mathbb{R}^d.
\end{eqnarray*}
\end{lemma}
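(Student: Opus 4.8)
The plan is to verify the three identities by direct substitution into the integral definitions (\ref{GRT}), (\ref{GT}), (\ref{WD}), (\ref{FWT}), performing in each case a single change of variables in the integration variable. All three are elementary, so the "main obstacle" is merely bookkeeping of the exponential factors and the factors of $2$; no genuine difficulty arises.

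For the first identity, I would start from $W(f,g)(x,\omega) = \int e^{-2\pi i\omega t} f(x+\tfrac t2)\overline{g(x-\tfrac t2)}\,dt$ and substitute $t \mapsto 2(x-s)$, i.e.\ set $s = x - \tfrac t2$, so $t = 2x-2s$ and $dt = 2^d\,ds$ (the Jacobian on $\rd$). Then $x+\tfrac t2 = 2x - s$, $x - \tfrac t2 = s$, and $e^{-2\pi i\omega t} = e^{-2\pi i\omega(2x-2s)} = e^{-4\pi i\omega x}e^{4\pi i\omega s}$, which gives $W(f,g)(x,\omega) = 2^d\int e^{4\pi i\omega(s-x)} f(2x-s)\overline{g(s)}\,ds = 2^d R_g f(x,\omega)$ by (\ref{GRT}).

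For the second identity, I would expand $R_{\check g} f(\tfrac x2,\tfrac\omega2) = \int e^{4\pi i\frac\omega2(t-\frac x2)} f(x-t)\overline{\check g(t)}\,dt = \int e^{2\pi i\omega(t-\frac x2)} f(x-t)\overline{g(-t)}\,dt$, then substitute $t \mapsto -u$ (so $t = -u$, $dt = du$ on $\rd$): this yields $\int e^{2\pi i\omega(-u-\frac x2)} f(x+u)\overline{g(u)}\,du = e^{-\pi i x\omega}\int e^{-2\pi i\omega u} f(x+u)\overline{g(u)}\,du$. Comparing with (\ref{GT}), $V_g f(x,\omega) = \int e^{-2\pi i t\omega} f(t)\overline{g(t-x)}\,dt$; here a substitution $t \mapsto x+u$ in the definition of $V_g f$ gives $V_g f(x,\omega) = \int e^{-2\pi i(x+u)\omega} f(x+u)\overline{g(u)}\,du = e^{-2\pi i x\omega}\int e^{-2\pi i u\omega} f(x+u)\overline{g(u)}\,du$. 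Matching the two expressions shows $e^{-\pi i x\omega} R_{\check g} f(\tfrac x2,\tfrac\omega2) = e^{-\pi i x\omega}\cdot e^{\pi i x\omega} V_g f(x,\omega) = V_g f(x,\omega)$, as claimed. (Alternatively, one may read the second identity off from the first together with the known relation between $V_g f$ and $W(f,g)$, but the direct computation is cleanest and self-contained.)

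For the third identity, the computation of $R_{\check g} f(\tfrac x2,\tfrac\omega2)$ just performed already equals $e^{-\pi i x\omega}\int e^{-2\pi i\omega u} f(x+u)\overline{g(u)}\,du$; alternatively, starting directly from (\ref{FWT}), $A(f,g)(x,\omega) = \int e^{-2\pi i\omega t} f(t+\tfrac x2)\overline{g(t-\tfrac x2)}\,dt$, I substitute $t \mapsto s - \tfrac x2$ to get $\int e^{-2\pi i\omega(s-\frac x2)} f(s)\overline{g(s-x)}\,ds = e^{\pi i\omega x} V_g f(x,\omega)$. Combining with the second identity, $A(f,g)(x,\omega) = e^{\pi i x\omega} V_g f(x,\omega) = e^{\pi i x\omega}\cdot e^{-\pi i x\omega} R_{\check g} f(\tfrac x2,\tfrac\omega2) = R_{\check g} f(\tfrac x2,\tfrac\omega2)$. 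Thus all three relations follow from the same one-variable substitutions, and the only thing to watch is the consistent use of the $2^d$ Jacobian and the cancellation of the quadratic phase $e^{\pm\pi i x\omega}$.
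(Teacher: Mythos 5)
Your proposal is correct and follows essentially the same route as the paper: direct elementary changes of variables in the defining integrals, with the $2^d$ Jacobian producing the factor in the first identity and the quadratic phases $e^{\pm\pi i x\omega}$ accounted for in the other two. The only cosmetic difference is that you obtain the third identity by routing through $A(f,g)=e^{\pi i x\omega}V_g f$ and the second identity, whereas the paper substitutes directly in the ambiguity integral; both are the same one-line computation.
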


\begin{proof}
$
W(f,g) (x,\omega)  = 2^d R_g f (x,\omega)$
immediately follows from the change of variables $t \mapsto 2(x-t)$ in \eqref{WD}, so  the
cross--Wigner distribution and the  Grossmann-Royer transform are essentially the same.

The change of variables $x-t \mapsto s$ in \eqref{GT} gives
\begin{eqnarray*}
V_g f (x,\omega) & = &  \int e^{-2\pi i (x-s) \omega}   f(x-s) \overline{\check{g}(s)} ds \\
 & = &  \int e^{4\pi i \frac{\omega}{2} (s-x) }   f(2 \frac{x}{2}-s) \overline{\check{g}(s)} ds
\\
 & = &  e^{-\pi i x \omega} \int e^{4\pi i \frac{\omega}{2} (s-\frac{x}{2}) }   f(2 \frac{x}{2}-s) \overline{\check{g}(s)} ds
\\
 & = &  e^{-\pi i x \omega} R_{\check{g}} f (\frac{x}{2},\frac{\omega}{2}).
\end{eqnarray*}
Therefore, $  R_g f (x,\omega) =   e^{\pi i x \omega} V_{\check{g}} f (2x,2\omega).$
Finally, the change of variables $t - x/2 \mapsto - s$ in \eqref{FWT} gives
\begin{eqnarray*}
A(f,g) (x,\omega)   & = &   \int e^{2\pi i \omega (s- \frac{x}{2})}
f(x-s) \overline{ \check{g}(s)} ds\\
  & = &   \int e^{4\pi i \frac{\omega}{2} (s- \frac{x}{2})}
f(2 \frac{x}{2} - s) \overline{ \check{g}(s)} ds\\
 & = &
 R_{\check{g}} f (\frac{x}{2},\frac{\omega}{2}).
\end{eqnarray*}
Therefore $  R_g f (x,\omega) =   A(f, \check{g}) (2x,2\omega).$
\qed
\end{proof}

By Lemma \ref{GRandRelatives} we recapture the well-known formulas (\cite{Gro, Wong1998}:
$$
A(f,g) (x,\omega)  = e^{\pi i x \omega} V_g f (x,\omega),
$$
$$
W(f,g) (x,\omega)  = 2^d e^{4\pi i x \omega} V_{\check{g}} f (2x,2\omega),
$$
$$
W(f,g) (x,\omega)  = ({\mathcal F} A(f,g) )(x,\omega), \;\;\;
x,\omega \in \mathbb{R}^d.
$$
The quadratic expressions $ A f := A (f,f) $ and  $ W f := W (f,f) $
are called the (radar) ambiguity function and the Wigner distribution of $f.$

We collect the elementary properties of the   Grossmann-Royer transform in the next proposition.

\begin{proposition} \label{svojstva}
Let  $f, g \in L^2 (\mathbb{R}^d).$ The Grossmann-Royer operator is self-adjoint, uniformly continuous on
$\mathbb{R}^{2d}$, and the following properties hold:
\begin{enumerate}
\item $ \| R_g f \|_\infty \leq \| f\| \|g\|$;
\item $  R_g f = \overline{R_f g}$;
\item  The covariance property:
$$
R_{T_x M_\omega g} T_x M_\omega f (p,q) = R_g f (p-x, q-\omega), \;\;\;\; x,\omega,p,q \in \mathbb{R}^d;
$$
\item $  R_{ \hat g} \hat f ( x, \omega) =  R_{ g} f (-\omega, x)$;
\item For $ f_1, f_2, g_1, g_2  \in L^2 (\mathbb{R}^d),$ the Moyal identity holds:
$$ \langle R_{g_1} f_1, R_{ g_2} f_2 \rangle = \langle f_1, f_2 \rangle \overline{\langle g_1, g_2 \rangle}. $$
\item $ R_g f$  maps $ {\mathcal S} (\mathbb{R}^d) \times
{\mathcal S} (\mathbb{R}^d) $ into $  {\mathcal S} (\mathbb{R} ^d \times \mathbb{R} ^d)$ and extends to
a map from $ {\mathcal S}' (\mathbb{R}^d) \times {\mathcal S}' (\mathbb{R}^d) $ into
$ {\mathcal S}' (\mathbb{R} ^d \times \mathbb{R} ^d).$
\end{enumerate}
\end{proposition}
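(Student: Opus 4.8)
The plan is to prove the six claims of Proposition~\ref{svojstva} essentially by direct computation, exploiting the relation \eqref{GRT-trans-mod} and Lemma~\ref{GRandRelatives} to transfer well-known facts about $V_g f$ to $R_g f$ wherever a change of variables would be tedious. First I would record that the Grossmann-Royer \emph{operator} $R$ is self-adjoint: since $R f(x,\omega) = e^{4\pi i \omega(t-x)} f(2x-t)$ is, for each fixed $(x,\omega)$, the composition of the unitary reflection $f(t)\mapsto f(2x-t)$ with multiplication by a unimodular function, it is unitary, hence in particular $\langle Rf,g\rangle = \langle f, Rg\rangle$ follows from the change of variable $t\mapsto 2x-t$ (which is its own inverse) together with the fact that $e^{4\pi i\omega(t-x)}$ is turned into its conjugate under this substitution; uniform continuity on $\mathbb{R}^{2d}$ of $(x,\omega)\mapsto R_g f(x,\omega)$ then follows from density of $\mathcal{S}$ in $L^2$ together with the uniform bound in item~(1), by the standard $\varepsilon/3$ argument.

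Next I would dispatch the individual items. Item~(1) is Cauchy--Schwarz applied to the integral in \eqref{GRT}, using that $t\mapsto f(2x-t)$ has the same $L^2$ norm as $f$. Item~(2) follows from conjugating \eqref{GRT} and substituting $t\mapsto 2x-t$: $\overline{R_g f(x,\omega)} = \int e^{4\pi i\omega(x-t)}\overline{f(2x-t)} g(t)\,dt = \int e^{4\pi i\omega(t-x)} g(2x-t)\overline{f(t)}\,dt = R_f g(x,\omega)$. For the covariance property~(3) I would write $R_g f(x,\omega) = e^{\pi i x\omega} V_{\check g} f(2x,2\omega)$ from Lemma~\ref{GRandRelatives} and invoke the standard covariance of the STFT, namely $V_{T_x M_\omega g}(T_x M_\omega f)(p,q) = e^{-2\pi i x(q-\omega)} V_g f(p-x,q-\omega)$, being careful to track how the reflection $\check{\ }$ interacts with $T_x M_\omega$ (one has $(T_x M_\omega g)^{\check{\ }} = M_\omega T_{-x}\check g = e^{\ldots} T_{-x} M_\omega \check g$); after collecting the unimodular prefactors these should cancel to give exactly $R_g f(p-x,q-\omega)$. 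Alternatively, and perhaps more cleanly, one computes directly from \eqref{GRT} by substituting and using $M_y T_z = e^{2\pi i yz} T_z M_y$. Item~(4), the Fourier-transform intertwining, is again fastest via the STFT identity $V_{\hat g}\hat f(x,\omega) = e^{-2\pi i x\omega} V_g f(-\omega,x)$ pushed through Lemma~\ref{GRandRelatives}, or directly from the Fourier-inversion-type computation in \eqref{GRT}. Item~(5), the Moyal identity, follows from the Moyal identity for the STFT, $\langle V_{g_1} f_1, V_{g_2} f_2\rangle = \langle f_1,f_2\rangle\overline{\langle g_1,g_2\rangle}$, together with the substitution $(x,\omega)\mapsto(x/2,\omega/2)$ in the inner product over $\mathbb{R}^{2d}$ (which produces a Jacobian $2^{2d}$) and the relation $R_g f(x,\omega) = e^{\pi i x\omega} V_{\check g} f(2x,2\omega)$, noting the unimodular factors $e^{\pm\pi i x\omega}$ cancel inside $|\cdot|^2$-type pairings and that $\langle\check g_1,\check g_2\rangle = \langle g_1,g_2\rangle$.

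For item~(6), continuity and extension to $\mathcal{S}'$, I would argue that $R$ maps $\mathcal{S}(\mathbb{R}^d)\times\mathcal{S}(\mathbb{R}^d)$ into $\mathcal{S}(\mathbb{R}^{2d})$: this is most economically done by transferring the analogous (standard) statement for $V_g f$ through the linear change of variables and unimodular twist in Lemma~\ref{GRandRelatives}, since a linear change of variables and multiplication by $e^{\pi i x\omega}$ both preserve $\mathcal{S}(\mathbb{R}^{2d})$, and the bilinear map $(f,g)\mapsto V_g f$ is continuous $\mathcal{S}\times\mathcal{S}\to\mathcal{S}$. The extension to $\mathcal{S}'\times\mathcal{S}'$ then follows by duality: for $f,g\in\mathcal{S}'$ one \emph{defines} $R_g f \in \mathcal{S}'(\mathbb{R}^{2d})$ by $\langle R_g f, \Phi\rangle := \langle f\otimes\bar g,\, K_R\Phi\rangle$ where $K_R$ is the kernel operator dual to $(f,g)\mapsto R_g f$ on test functions — concretely, using the weak-sense interpretation already built into Definition~\ref{transformacije}, one checks that $\Phi\mapsto \iint R_{g}f\,(x,\omega)\,\overline{\Phi(x,\omega)}\,dx\,d\omega$ extends continuously, again most easily by reduction to the STFT case where this is classical (e.g.\ \cite{Gro}). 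The main obstacle, such as it is, is purely bookkeeping: keeping the unimodular prefactors, the factors of $2$, the reflections $\check{\ }$, and the Jacobians consistent when translating between $R_g f$, $V_g f$, $W(f,g)$ and $A(f,g)$ — there is no conceptual difficulty, but it is easy to drop a factor, so I would state each transfer identity from Lemma~\ref{GRandRelatives} explicitly before using it and verify the prefactors cancel at the end of each item.
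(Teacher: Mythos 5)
Your overall plan—direct computation for the self-adjointness and items (1), (2), and transfer of standard STFT facts through Lemma~\ref{GRandRelatives} for (3)--(6)—is reasonable, and it differs in flavor from the paper, which proves \emph{every} item by direct manipulation of \eqref{GRT} (change of variables for (2), (3), the distributional Fourier inversion $\int e^{2\pi i x\omega}\,d\omega=\delta(x)$ for (4), (5), and the factorization $R_g f=(\mathcal F_2\circ\tau^*)(f\otimes\bar g)$ for (6), which is essentially your kernel/duality extension). Two small points on your first paragraph: unitarity of $R(x,\omega)$ does not by itself give self-adjointness — it is your change-of-variables computation $t\mapsto 2x-t$ that proves $\langle Rf,g\rangle=\langle f,Rg\rangle$, exactly as in the paper; and your $\varepsilon/3$ argument for uniform continuity still needs the base case (uniform continuity of $R_g f$ for Schwartz $f,g$), which you never argue, whereas the paper gets uniform continuity directly from \eqref{GRT-trans-mod} and the strong (uniform) continuity of translations and modulations on $L^2$.

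The genuine problems are in the transfer identities. (i) The STFT covariance you quote in (3) is false: shifting \emph{both} the function and the window by the same time-frequency shift gives $V_{T_xM_\omega g}(T_xM_\omega f)(p,q)=e^{2\pi i(\omega p-xq)}\,V_g f(p,q)$ — only a phase, no argument shift. The shift $(p-x,q-\omega)$ for $R_g f$ appears precisely because the window enters Lemma~\ref{GRandRelatives} reflected, and $(T_xM_\omega g)^{\check{}}=e^{-2\pi i\omega x}M_{-\omega}T_{-x}\check g$ (not $M_\omega T_{-x}\check g$ as you wrote); so the STFT route for (3) does not work as sketched, and your fallback direct computation (the paper's proof) is the one to use. (ii) Inverting Lemma~\ref{GRandRelatives} gives $R_g f(x,\omega)=e^{4\pi i x\omega}V_{\check g}f(2x,2\omega)$ — compare $W(f,g)=2^d e^{4\pi i x\omega}V_{\check g}f(2x,2\omega)$ with $W(f,g)=2^d R_g f$ — not $e^{\pi i x\omega}V_{\check g}f(2x,2\omega)$; this is harmless where the phases cancel, but it breaks any phase-tracking argument in (3) and (4) run through the STFT. (iii) In the Moyal identity (5) the dilation $(x,\omega)\mapsto(2x,2\omega)$ has Jacobian $2^{-2d}$, not $2^{2d}$, and a careful execution of your own transfer yields $\langle R_{g_1}f_1,R_{g_2}f_2\rangle=2^{-2d}\langle f_1,f_2\rangle\overline{\langle g_1,g_2\rangle}$, as forced by $R_g f=2^{-d}W(f,g)$ and the Wigner Moyal formula; so the assertion that ``the prefactors cancel'' does not close this item as written — you must track the constants explicitly and reconcile the normalization with the identity as stated (the paper obtains (5) by a direct $\delta$-function computation in which these dilation factors are exactly the delicate point). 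Items (1), (2), (4) (via the direct computation) and (6) are fine and essentially coincide with the paper's arguments.
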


\begin{proof}
We first show that $  \langle R f, g \rangle   =  \langle  f, R g \rangle:$
\begin{eqnarray*}
\langle R f, g \rangle
& = & \int e^{4\pi i  \omega(t-x)} f(2x- t) \overline{ g(t)} dt =
\int e^{4\pi i  \omega(x-s)} f(s) \overline{ g(2x-s)} ds
\\
& = & \int  f(s)  \overline{ e^{4\pi i  \omega(s-x)}  g(2x-s)} ds  = \langle  f, R g \rangle.
\end{eqnarray*}

The uniform continuity and the estimate in {\em 1.} follow from \eqref{GRT-trans-mod}, the continuity of  translation, modulation and reflection operators on $ L^2 (\mathbb{R}^d)$ and the Cauchy-Schwartz inequality.

The property {\em 2.} follows from the self-adjointness.

For the covariance we have:
\begin{eqnarray*}
R_{T_x M_\omega g} T_x M_\omega f (p,q)
& = & \int e^{4\pi i q(t-p)} T_x M_\omega f (2p -t) \overline{ T_x M_\omega g} (t) dt \\
& = & \int e^{4\pi i q(t-p)} e^{2\pi i \omega (2p-t)}  f (2p -t-x) e^{-2\pi i \omega t}  \overline{ g} (t-x) dt \\
& = & \int e^{4\pi i q(s+x-p)} e^{2\pi i \omega (2p-s-x)}  f (2(p-x)-s) e^{-2\pi i \omega (s+x)}  \overline{ g} (s) ds \\
& = & \int e^{4\pi i (q-\omega)(s-(p-x))}  f (2(p-x)-s)   \overline{ g} (s) ds \\
& = &  R_g f (p-x, q-\omega), \;\;\;\; x,\omega,p,q \in \mathbb{R}^d.
\end{eqnarray*}

To prove {\em 4.} we note that the integrals below are absolutely convergent and that the change of order of integration
is allowed. Moreover, when suitably interpreted, certain oscillatory integrals are meaningful in
$  {\mathcal S}^{'} (\mathbb{R}^{d})$. We refer to e.g. \cite{Shubin91, Wong1998} for such interpretation. In particular, if $\delta$ denotes the Dirac distribution, then
the Fourier inversion formula in the sense of distributions gives
$
\int e^{2\pi i x \omega} d \omega = \delta (x),
$
wherefrom $ \iint \phi (x)  e^{2\pi i (x-y) \omega} d x d \omega = \phi (y),$ when $\phi \in
{\mathcal S}(\mathbb{R}^{d})$.

Therefore we have:
\begin{eqnarray*}
R_{ \hat g} \hat f ( x, \omega)
& = & \int e^{4\pi i  \omega(t-x)} \hat{f}(2x- t) \overline{ \hat{g}(t)} dt \\
& = & e^{-4\pi i  \omega x} \int \int \int e^{4\pi i  \omega t }
e^{-2\pi i  \eta (2x-t) } f(\eta) e^{2\pi i  y t }  \overline{ g(y)} d\eta dy dt \\
& = & e^{-4\pi i  \omega x} \int \int \int
e^{-2\pi i t( -2 \omega - \eta -y) } e^{-4\pi i  \eta x} f(\eta) \overline{ g(y)} d\eta dy dt \\
& = & e^{-4\pi i  \omega x} \int \int
\delta ( -2 \omega - \eta -y)  e^{-4\pi i  \eta x} f(\eta) \overline{ g(y)} d\eta dy \\
& = & e^{-4\pi i  \omega x} \int
e^{-4\pi i  ( -2 \omega-y) x}  f( -2 \omega-y) \overline{ g(y)}  dy \\
& = &  \int
e^{4\pi i  (y+ \omega) x}  f( -2 \omega-y) \overline{ g(y)}  dy \\
& = &   R_{ g} f (-\omega, x), \;\;\;\; x,\omega \in \mathbb{R}^d.
\end{eqnarray*}

For the Moyal identity, we again use the Fourier transform of Dirac's delta:
\begin{eqnarray*}
\langle R_{g_1} f_1, R_{ g_2} f_2 \rangle
& = & \int_{\mathbb{R}^{2d}} \big (
\int e^{4\pi i  \omega(t-x)} f_1 (2x- t) \overline{ g_1(t)} dt  \\
& \times &
\int e^{-4\pi i  \omega(s-x)} \overline{f_2} (2x- s) g_2(s) ds  \big ) dx d\omega \\
& = &
\int_{\mathbb{R}^{2d}} \iint  e^{-2\pi i  \omega (-2(t-s))}
 f_1 (2x- t)  \overline{f_2} (2x- s) \overline{ g_1(t)} g_2(s) dt ds
  dx d\omega \\
& = &
\int \int \int \delta (-2(t-s)) f_1 (2x- t)  \overline{f_2} (2x- s) \overline{ g_1(t)} g_2(s) dt ds
 dx  \\
 & = &
 \langle f_1, f_2 \rangle \overline{\langle g_1, g_2 \rangle}.
\end{eqnarray*}

To prove {\em 6.} we note that the Grossman-Royer transform can be written in the form
\be \label{GRrewritten}
R_g f (x,\omega) = (\mathcal{F}_2 \circ \tau^*) f \otimes \overline{ g} (x,\omega), \;\;\;x,\omega \in \mathbb{R}^d,
\ee
where $ \mathcal{F}_2 $ denotes the partial Fourier transform with respect to the second variable, and
$ \tau^*$ is the pullback of the operator $ \tau : (x,s) \mapsto (2x-s,s).$ The theorem now follows
from the invariance of ${\mathcal S} (\mathbb{R}^{2d}) $ under  $ \tau^* $ and  $ \mathcal{F}_2 $,
and $f \otimes \overline{ g} \in  {\mathcal S} (\mathbb{R}^{2d}) $ when $f,g \in  {\mathcal S} (\mathbb{R}^d),$ and
the extension to tempered distributions is straightforward.
\qed
\end{proof}

The Moyal identity formula implies the inversion formula:
$$
f = \frac{1}{\langle g_2, g_1 \rangle} \int \int  R_{g_1} f (x, \omega)
R g_2 (x,\omega)  dx d\omega,
$$
We refer to \cite{Gro} for details (explained in terms of the short-time Fourier transform).

For  the Fourier transform of $R_g f $ we have:

\begin{proposition}
Let  $f, g \in L^2 (\mathbb{R}^d).$ Then $\Fur (R_g f) (x,\omega) = R_{\check{g}} f (- \frac{\omega}{2}, \frac{x}{2}), $  $ x,\omega \in \mathbb{R}^d.$
\end{proposition}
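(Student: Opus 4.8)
The plan is to compute $\Fur(R_g f)$ directly from Definition~\ref{transformacije}, in exactly the spirit of the proof of property~\emph{4} in Proposition~\ref{svojstva}. First I would insert the integral defining $R_g f$ into the two-dimensional Fourier integral,
$$
\Fur(R_g f)(\xi,\eta)=\iint_{\mathbb R^{2d}}\Big(\int e^{4\pi i\omega(t-x)}f(2x-t)\overline{g(t)}\,dt\Big)\,e^{-2\pi i(x\xi+\omega\eta)}\,dx\,d\omega ,
$$
and interchange the order of integration. For $f,g\in\mathcal S(\mathbb R^d)$ this is legitimate, and — as in Proposition~\ref{svojstva} — the oscillatory integrals appearing below are to be read in the sense of tempered distributions, cf.~\cite{Shubin91,Wong1998}; so the identity is first established on $\mathcal S(\mathbb R^d)\times\mathcal S(\mathbb R^d)$ and then extended to $L^2(\mathbb R^d)$ (and, if desired, to $\mathcal S'$) by density.

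Next I would carry out the $\omega$-integration. The $\omega$-dependent factor is $e^{4\pi i\omega(t-x)-2\pi i\omega\eta}=e^{2\pi i\omega(2(t-x)-\eta)}$, so the Fourier inversion formula $\int e^{2\pi i\omega v}\,d\omega=\delta(v)$ produces $\delta\!\big(2(t-x)-\eta\big)$. Integrating this in $t$ — using $\delta(2\,\cdot\,)=2^{-d}\delta(\,\cdot\,)$, just as in Proposition~\ref{svojstva} — forces $t=x+\eta/2$ and leaves the single integral
$$
\Fur(R_g f)(\xi,\eta)=2^{-d}\int e^{-2\pi ix\xi}\,f\big(x-\tfrac\eta2\big)\,\overline{g\big(x+\tfrac\eta2\big)}\,dx .
$$
Finally I would recognize the right-hand side as a Grossmann--Royer transform with reflected window. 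Writing out $R_{\check g}f(-\tfrac\eta2,\tfrac\xi2)=\int e^{2\pi i\xi(t+\eta/2)}f(-\eta-t)\overline{g(-t)}\,dt$ and substituting $t\mapsto -t$ turns it into the same integral as above after the change of variables $x\mapsto x-\eta/2$; this yields $\Fur(R_g f)(\xi,\eta)=R_{\check g}f(-\tfrac\eta2,\tfrac\xi2)$ (with the normalizing power of $2$ from $\delta(2\,\cdot\,)$ carried along), which is the assertion after relabelling $(\xi,\eta)\mapsto(x,\omega)$.

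Alternatively, one could avoid the direct computation and pass through Lemma~\ref{GRandRelatives}: write $R_g f$ via $W(f,g)=\Fur A(f,g)$, use the involutivity $\Fur^2 h=\check h$, and then $A(f,g)(x,\omega)=R_{\check g}f(x/2,\omega/2)$; but the accompanying bookkeeping of dilations and reflections is no shorter than the direct argument. In either case the computation is entirely routine; the only genuine points are the rigorous justification of the delta-function manipulation — handled exactly as in Proposition~\ref{svojstva}, by the distributional reading of the oscillatory integrals together with a density argument — and keeping the several changes of variables (and the factor $2^{-d}$ coming from $\delta(2\,\cdot\,)$) straight, in particular which of the two output variables becomes the ``position'' slot and which the ``frequency'' slot of $R_{\check g}f$.
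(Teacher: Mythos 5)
Your strategy is exactly the paper's: plug the defining integral of $R_g f$ into the Fourier integral, interchange the order of integration, use $\int e^{2\pi i\omega v}\,d\omega=\delta(v)$ in the distributional sense, and finish with a change of variables that exhibits $R_{\check g}f$; the only cosmetic difference is that you collapse the delta in the $t$-variable while the paper collapses it in the $x'$-variable. Up to your penultimate display the computation is carried out correctly and, in fact, more carefully than in the paper, because you record the Jacobian $\delta(2\,\cdot\,)=2^{-d}\delta(\,\cdot\,)$.

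Precisely there, however, your final step does not follow from your own formula. Writing out the right-hand side of the claim from the definition, one has
$R_{\check g}f\bigl(-\tfrac\eta2,\tfrac\xi2\bigr)=\int e^{-2\pi i x\xi}f\bigl(x-\tfrac\eta2\bigr)\overline{g\bigl(x+\tfrac\eta2\bigr)}\,dx$
with no numerical prefactor, so your display
$\Fur(R_g f)(\xi,\eta)=2^{-d}\int e^{-2\pi ix\xi}\,f\bigl(x-\tfrac\eta2\bigr)\,\overline{g\bigl(x+\tfrac\eta2\bigr)}\,dx$
yields $\Fur(R_g f)(\xi,\eta)=2^{-d}\,R_{\check g}f\bigl(-\tfrac\eta2,\tfrac\xi2\bigr)$, not the stated identity; the parenthetical remark that the power of $2$ is ``carried along'' does not reconcile the two, since the factor has nowhere to be absorbed. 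For what it is worth, the paper's own proof silently drops the same Jacobian when integrating $\delta(\omega-2t+2x')$ in $x'$, and the version with the factor $2^{-d}$ is the one consistent with Lemma~\ref{GRandRelatives}: from $R_g f=2^{-d}W(f,g)$, $\Fur W(f,g)(x,\omega)=A(f,g)(-\omega,x)$ and $A(f,g)(-\omega,x)=R_{\check g}f\bigl(-\tfrac\omega2,\tfrac x2\bigr)$ one gets exactly $\Fur(R_g f)(x,\omega)=2^{-d}R_{\check g}f\bigl(-\tfrac\omega2,\tfrac x2\bigr)$. So either state the conclusion with the factor $2^{-d}$, or explicitly identify (and justify) the normalization convention under which it disappears; as written, the last line of your argument contradicts the display immediately preceding it.
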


\begin{proof} We again use the  interpretation of oscillatory integrals in distributional sense.
\begin{eqnarray*}
\Fur (R_g f) (x,\omega)
& = & \int \int  e^{-2\pi i  (x x' + \omega \omega')} R_g f (x', \omega') dx' d\omega' \\
& = & \int \int \int   e^{-2\pi i  (x x' + \omega \omega') + 4\pi i \omega' (t-x')} f (2x' -t) \overline{g} (t) dt dx' d\omega' \\
& = & \int \int \int e^{-2\pi i  \omega' (\omega-2t+2x')} e^{-2\pi i x x'} f (2x' -t) \overline{g} (t) dt dx' d\omega' \\
& = & \int \int \delta (\omega-2t+2x') e^{-2\pi i x x'} f (2x' -t) \overline{g} (t) dt dx' \\
& = & \int \int  e^{-2\pi i x (t - \omega/2)} f (t-\omega) \overline{g} (t) dt  \\
& = & \int \int  e^{-4\pi i \frac{x}{2} (t - \omega/2)} f (t-\omega) \overline{g} (t) dt  \\
& = & \int \int  e^{4\pi i \frac{x}{2} (s -(- \omega/2))} f (2\cdot (- \omega/2) -s) \overline{\check{g}} (s) ds  \\
& = &  R_{\check{g}} f (- \frac{\omega}{2}, \frac{x}{2}), \;\;\;  x,\omega \in \mathbb{R}^d,
\end{eqnarray*}
which proves the proposition.
\qed
\end{proof}

The following lemma shows that the Grossmann-Royer transform yields the correct marginal densities.
This follows from the well-known  marginal densities of the (cross-)Wigner distribution,
since the transforms are essentially the same. However, we give an independent proof below.
We refer to \cite{Gro, deGosson2011}
for the discussion on the probabilistic interpretation of the (cross-)Wigner distribution.

\begin{lemma} \label{marginal}
Let $f,g \in L^1 \mathbb{R}^{d} \cap L^2 \mathbb{R}^{d}.$ Then
$$
\int_{\mathbb{R}^d} R_g f (x,\omega) d\omega = 2^{-d} f(x) \overline{g} (x),
$$
$$
\int_{\mathbb{R}^d} R_g f (x,\omega) dx = 2^{-d} \hat{f}(\omega) \overline{\hat{g}} (\omega),
$$
and, in particular $ 2^{d } \int R_f f (x,\omega) d\omega = |f(x)|^2, $ and
$
2^{d } \int R_f f (x,\omega) dx = |\hat{f}(\omega)|^2.
$
\end{lemma}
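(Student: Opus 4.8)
The plan is to compute the two marginal integrals directly from the integral representation \eqref{GRT}, using the Fourier inversion formula in the form $\int e^{2\pi i x\omega}\,d\omega=\delta(x)$ that was already employed in the proof of Proposition \ref{svojstva}; the hypotheses $f,g\in L^1\cap L^2$ are exactly what is needed to justify the interchange of order of integration via Fubini. First I would write
\[
\int_{\mathbb R^d}R_g f(x,\omega)\,d\omega
=\int_{\mathbb R^d}\!\int_{\mathbb R^d}e^{4\pi i\omega(t-x)}f(2x-t)\overline{g(t)}\,dt\,d\omega
=\int_{\mathbb R^d}\Big(\int_{\mathbb R^d}e^{4\pi i\omega(t-x)}\,d\omega\Big)f(2x-t)\overline{g(t)}\,dt.
\]
The inner integral equals $2^{-d}\delta(t-x)$ (rescaling the inversion formula by the factor $2$ in the exponent produces the Jacobian $2^{-d}$), so evaluating the remaining $t$-integral at $t=x$ gives $2^{-d}f(x)\overline{g(x)}$, which is the first claimed identity.

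For the second identity I would integrate in $x$ instead. After Fubini,
\[
\int_{\mathbb R^d}R_g f(x,\omega)\,dx
=\int_{\mathbb R^d}\!\int_{\mathbb R^d}e^{4\pi i\omega(t-x)}f(2x-t)\overline{g(t)}\,dx\,dt,
\]
and I would substitute $s=2x-t$ (so $x=(s+t)/2$, $dx=2^{-d}\,ds$) to turn the $x$-integral into $2^{-d}\int e^{4\pi i\omega((t-s)/2 - t/2)}f(s)\,ds=2^{-d}\int e^{-2\pi i\omega s}f(s)\,ds = 2^{-d}\hat f(\omega)$ for each fixed $t$ — wait, more carefully one keeps the $t$-dependence: the exponent is $4\pi i\omega(t-x)=2\pi i\omega(t-s)$, so the double integral factors as $2^{-d}\big(\int e^{-2\pi i\omega s}f(s)\,ds\big)\big(\int e^{2\pi i\omega t}\overline{g(t)}\,dt\big)=2^{-d}\hat f(\omega)\overline{\hat g(\omega)}$, using that $\int e^{2\pi i\omega t}\overline{g(t)}\,dt=\overline{\int e^{-2\pi i\omega t}g(t)\,dt}=\overline{\hat g(\omega)}$.

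The two ``in particular'' statements follow at once by setting $g=f$ and multiplying by $2^d$. The main obstacle — really the only nontrivial point — is the rigorous justification of the manipulations: the first computation involves an integral against the Dirac distribution and must be read in the sense of oscillatory integrals (as flagged in the proof of Proposition \ref{svojstva}, citing \cite{Shubin91, Wong1998}), while in the second computation Fubini genuinely applies because $(x,t)\mapsto f(2x-t)\overline{g(t)}$ is integrable on $\mathbb R^{2d}$ when $f,g\in L^1$ (after the change of variables it becomes $f\otimes\overline g$). I would state that the $L^1\cap L^2$ hypothesis guarantees both that $R_g f$ is a bounded continuous function (so the marginals make pointwise sense) and that the Fubini/substitution steps are legitimate, and note that the identities then hold for general $f,g\in L^2$ in the distributional sense by the density of $L^1\cap L^2$ in $L^2$ together with Proposition \ref{svojstva}(6).
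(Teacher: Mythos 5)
Your proposal is correct and follows essentially the same route as the paper's proof: for the $\omega$-marginal you use the distributional identity $\int e^{2\pi i x\omega}\,d\omega=\delta(x)$ (with the scaling factor $2^{-d}$, which the paper obtains equivalently via the substitution $s=x-t$ and $\delta(2s)$), and for the $x$-marginal you use the substitution $s=2x-t$ to factor the double integral into $2^{-d}\hat f(\omega)\overline{\hat g(\omega)}$, exactly as in the paper. Your additional remarks on Fubini and the oscillatory-integral interpretation only make explicit what the paper leaves implicit.
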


\begin{proof}
We will use the change of variables $s = x - t $ to obtain
\begin{eqnarray*}
\int_{\mathbb{R}^d} R_g f (x,\omega) d\omega
& = & \int \int    e^{ 4\pi i \omega (t-x)} f (2x -t) \overline{g} (t)  dt d\omega \\
& = & \int \int e^{-2\pi i  \omega (2s)} f (x+s) \overline{g} (x-s) ds  d\omega \\
& = & \int \delta (2s)  f (x+s) \overline{g} (x-s) ds\\
& = &  2^{-d} f (x) \overline{g} (x), \;\;\;  x \in \mathbb{R}^d,
\end{eqnarray*}
and similarly
\begin{eqnarray*}
\int_{\mathbb{R}^d} R_g f (x,\omega) dx
& = & \int \int    e^{ -2\pi i \omega (2x - 2t)} f (2x -t) \overline{g} (t)  dt dx \\
& = & \int \int    e^{ -2\pi i \omega (2x - t)} f (2x -t) \overline{ e^{ -2\pi i \omega t} g(t)}  dt dx \\
& = & 2^{-d} \hat{f}(\omega) \overline{\hat{g}} (\omega), \;\;\;  \omega \in \mathbb{R}^d.
\end{eqnarray*}
The particular case is obvious.
\qed
\end{proof}

\begin{remark} If $f$ belongs to the dense subspace of $ L^2 ( \mathbb{R}^{d})$ such that $ R_f f \in L^1 (\mathbb{R}^{2d})$ and the Fubini theorem holds, then  Plancherel's theorem follows from Lemma \ref{marginal}:
$$
\| \hat f\|^2 = 2^{d} \int \int R_f f (x,\omega) dx d\omega = 2^{d} \int \int R_f f (x,\omega)  d\omega dx
= \|  f\|^2.
$$
\end{remark}

We end the section with the weak form of the uncertainty principle for the Grossmann-Royer transform.

\begin{proposition} \label{uncertainty}
Let there be given  $f, g \in L^2 (\mathbb{R}^d)\setminus 0,$ and let $ U\subset \mathbb{R}^{2d} $  and $\varepsilon > 0 $
be such that
$$
\int_{U} | R_g f (x, \omega) |^2 dx d\omega \geq (1 - \varepsilon) \| f\|  \|g\|.
$$
Then $|U| \geq 1- \varepsilon.$
\end{proposition}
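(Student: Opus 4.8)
The plan is to read off the statement from two facts about $R_g f$ already established in Proposition~\ref{svojstva}: the pointwise bound $\|R_g f\|_\infty \le \|f\|\,\|g\|$ (item 1), and the Moyal identity (item 5), which with $f_1 = f_2 = f$ and $g_1 = g_2 = g$ gives the $L^2$ identity
$$\|R_g f\|_{L^2(\mathbb{R}^{2d})}^2 = \langle R_g f, R_g f\rangle = \langle f, f\rangle\,\overline{\langle g, g\rangle} = \|f\|^2\,\|g\|^2 .$$
Everything else is an elementary comparison of $\int_U |R_g f|^2$ with the measure $|U|$.

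First I would normalize, replacing $f$ by $f/\|f\|$ and $g$ by $g/\|g\|$, so that $\|f\| = \|g\| = 1$; then $\|R_g f\|_{L^2(\mathbb{R}^{2d})} = 1$ and $\|R_g f\|_\infty \le 1$, while the hypothesis becomes $\int_U |R_g f(x,\omega)|^2\,dx\,d\omega \ge 1-\varepsilon$ (the right-hand side of the hypothesis being understood in the scale-consistent form, i.e.\ with $\|f\|^2\|g\|^2$, which coincides with $\|f\|\,\|g\|$ precisely after this normalization). The conclusion then follows from the single chain
$$1-\varepsilon \;\le\; \int_U |R_g f(x,\omega)|^2\,dx\,d\omega \;\le\; \|R_g f\|_\infty^2 \int_U dx\,d\omega \;\le\; |U| ,$$
where the first step is the hypothesis, the second pulls the sup norm out of the integral over $U$, and the third uses $\|R_g f\|_\infty \le 1$.

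I do not expect any genuine obstacle: the substantive work (the $L^\infty$ bound and the Moyal identity) has already been done in Proposition~\ref{svojstva}, and only the trivial estimate $\int_U |F|^2 \le |U|\,\|F\|_\infty^2$ together with the normalization bookkeeping remain. If one prefers, the same argument can be phrased for the short-time Fourier transform via Lemma~\ref{GRandRelatives}, recovering Gr\"ochenig's weak uncertainty principle; the Grossmann-Royer version is obtained here with no extra effort.
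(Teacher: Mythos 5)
Your proof is correct and follows essentially the same route as the paper: one compares the hypothesis with $\int_U |R_g f|^2 \le \|R_g f\|_\infty^2\,|U|$ and the bound $\|R_g f\|_\infty \le \|f\|\,\|g\|$ from Proposition \ref{svojstva} (the Moyal identity you cite is never actually used). Your normalization and the squared sup-norm in fact repair a homogeneity slip in the paper's own one-line chain, which bounds $\int_U |R_g f|^2$ by $\|R_g f\|_\infty\,|U|$ unsquared against the unnormalized right-hand side $(1-\varepsilon)\|f\|\,\|g\|$; as literally stated the proposition is only valid when $\|f\|\,\|g\|\le 1$, i.e.\ under the scale-consistent reading you adopt.
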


\begin{proof}
From Proposition \ref{svojstva} {\em 1.} we have
$$
(1 - \varepsilon) \| f\|  \|g\| \leq \int_{U} | R_g f (x, \omega) |^2 dx d\omega \leq
\| R_g f \|_\infty  |U| \leq  \| f\| \|g\| |U|,
$$
and the claim follows.
\qed
\end{proof}

We end this section with the relation between the Grossmann-Royer operator and the Heisenberg-Weyl  operator, also known as displacement operators since it describes translations in phase space. Notice that  in \cite{deGosson2017}
the Grossmann-Royer operators are defined in terms of the Heisenberg-Weyl  operators,
so our definition \eqref{GROp} is formulated as a proposition  there.

\begin{definition} Let there be given $f, g \in L^2 (\mathbb{R}^d).$
The Heisenberg-Weyl  operator $ T : L^2 (\mathbb{R}^d) \rightarrow L^2 (\mathbb{R}^{2d})  $ is given by
\be \label{HWOp}
T f (x,\omega) =  T  (f(t) )(x,\omega)  = e^{2\pi i\omega (t-\frac{x}{2})} f(t-x), \;\;\;  x,\omega \in \mathbb{R}^d,
\ee
and the Heisenberg-Weyl transform is defined to be
\be \label{HWT}
T_g f (x,\omega) = T (f,g) (x,\omega)  = \langle T f, g \rangle   = \int e^{2\pi i\omega (t-\frac{x}{2})} f(t-x) \overline{ g(t)} dt, \;\;\; x,\omega \in \mathbb{R}^d.
\ee
\end{definition}

\begin{proposition}
Let there be given  $f, g \in L^2 (\mathbb{R}^d).$ Then we have
\begin{enumerate}
\item $ R f (x,\omega) = T (x,\omega)  R(0) T  f (-x,-\omega)$;
\item $ R (x,\omega) R f (p,q) = e^{-4\pi i \sigma ((x,\omega), (p,q))}  T  f (2(x-p,\omega-q)), $
where $\sigma$ is the standard symplectic form on phase space $ \mathbb{R}^{2d}$:
$$
\sigma ((x,\omega), (p,q)) = \omega \cdot  p - x \cdot q.
$$
\end{enumerate}
\end{proposition}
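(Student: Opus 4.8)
The plan is to verify the two identities by direct substitution at the level of the operators $R(x,\omega)$ and $T(x,\omega)$ acting on a generic $f\in L^2(\mathbb{R}^d)$, using only the explicit formulas \eqref{GROp} and \eqref{HWOp}. The single preliminary remark is that $R(0)=R(0,0)$ is the reflection operator: by \eqref{GROp}, $R(0,0)f(t)=f(-t)=\check f(t)$.

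For part 1, I would unwind the right-hand side from the inside out. By \eqref{HWOp} at the point $(-x,-\omega)$, the function $Tf(-x,-\omega)$ is $t\mapsto e^{-2\pi i\omega(t+x/2)}f(t+x)$; applying $R(0)$ replaces $t$ by $-t$, giving $t\mapsto e^{2\pi i\omega(t-x/2)}f(x-t)$; finally applying $T(x,\omega)$, i.e. replacing $t$ by $t-x$ and multiplying by $e^{2\pi i\omega(t-x/2)}$, produces the phase $e^{2\pi i\omega(t-x/2)}e^{2\pi i\omega(t-x-x/2)}=e^{4\pi i\omega(t-x)}$ and the argument $f(x-(t-x))=f(2x-t)$. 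By \eqref{GROp} this is exactly $Rf(x,\omega)(t)$, so part 1 reduces to collecting these three exponential factors.

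For part 2, I would start from $Rf(p,q)(t)=e^{4\pi iq(t-p)}f(2p-t)$ and apply $R(x,\omega)$, which amounts to substituting $t\mapsto 2x-t$ and multiplying by $e^{4\pi i\omega(t-x)}$; this gives $e^{4\pi i[(\omega-q)t-\omega x+2qx-qp]}f(t+2(p-x))$. On the other side, \eqref{HWOp} gives $Tf(2(x-p),2(\omega-q))(t)=e^{4\pi i(\omega-q)(t-x+p)}f(t+2(p-x))$, and multiplying by $e^{-4\pi i\sigma((x,\omega),(p,q))}=e^{-4\pi i(\omega p-xq)}$ yields the same phase: the cross-term $\omega p$ carried by $\sigma$ cancels the $\omega p$ produced when $(\omega-q)(t-x+p)$ is expanded, and what remains on both sides is $e^{4\pi i[(\omega-q)t-\omega x+2qx-qp]}f(t+2(p-x))$. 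Comparing phases and arguments of $f$ proves the identity.

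The computations are elementary, and the only real care needed is in (i) respecting the order of composition (innermost operator first) and (ii) tracking the half-shift $x/2$ built into the Heisenberg--Weyl operator \eqref{HWOp} together with the dilation factors $2(x-p)$, $2(\omega-q)$ in the statement --- indeed, it is precisely the interplay of these that forces the symplectic phase $e^{-4\pi i\sigma(\cdot,\cdot)}$ in part 2. One could alternatively argue through the commutation relations between translations, modulations and reflection (as already used for \eqref{GRT-trans-mod}), but the direct substitution above is the shortest route.
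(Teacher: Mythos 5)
Your proposal is correct, and it follows essentially the same route as the paper: part 1 is obtained by unwinding the composition $T(x,\omega)R(0)Tf(-x,-\omega)$ from the inside out using the explicit formulas \eqref{GROp} and \eqref{HWOp}, and part 2 by computing both sides directly and matching the common phase $e^{4\pi i[(\omega-q)t-\omega x+2qx-qp]}$ and argument $f(t+2(p-x))$. The phase bookkeeping in both parts checks out, so no gaps.
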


\begin{proof} {\em 1.}
Note that $R f(0) = f(-t)$, wherefrom
$$
T  f (-x,-\omega) =  e^{-2\pi i\omega (t+\frac{x}{2})} f(t+x) \;\;\; \Rightarrow
 R(0) T  f (-x,-\omega) =  e^{2\pi i\omega (t -\frac{x}{2})} f(x-t).
$$
Hence,
\begin{eqnarray*}
T (x,\omega)  R(0) T  f (-x,-\omega) & = & T (x,\omega)  e^{2\pi i\omega (t -\frac{x}{2})} f(x-t) \\
& = & e^{-\pi i\omega x}  T ( e^{2\pi i\omega t} f(x-t) ) (x,\omega) \\
& = & e^{-\pi i\omega x} e^{2\pi i\omega (t-\frac{x}{2})}   e^{2\pi i\omega (t-x)} f(x-(t-x))   \\
& = &  e^{4\pi i\omega (t-x)} f(2x-t)   \\
& = &   R f (x,\omega).
\end{eqnarray*}
To prove {\em 2.} we calculate both sides:
\begin{eqnarray*}
R (x,\omega) R f (p,q)  & = & R (e^{4\pi iq (t-p)} f(2p-t)) (x,\omega) \\
& = & e^{-4\pi iqp} R (e^{4\pi iq t} f(2p-t)) (x,\omega) \\
& = & e^{-4\pi iqp} e^{4\pi i\omega (t-x)}   e^{4\pi iq(2x-t)} f(2p-(2x-t)) \\
& = & e^{4\pi i\omega (t-x)}   e^{4\pi iq(2x-t-p)} f(t -2(x-p)),
\end{eqnarray*}
and
\begin{multline*}
e^{-4\pi i (\omega  p - x q)}  T  f (2((x,\omega) - (p,q)))  =
e^{-4\pi i (\omega  p - x q)}  e^{2\pi i 2(\omega - q) (t- (x-p))} f(t-2(x-p))\\
 =  e^{-4\pi i (\omega  p - x q)}  e^{4\pi i (\omega - q) t} e^{-4\pi i (\omega - q)(x-p)} f(t-2(x-p))\\
 =  e^{4\pi i \omega(- p +t- x-p)}   e^{-4\pi i p q}  e^{4\pi i q(x-t +x)}  f(t-2(x-p))\\
 =  e^{4\pi i\omega (t-x)}   e^{4\pi iq(2x-t-p)}   f(t-2(x-p)),
\end{multline*}
which shows that
$ R (x,\omega) R f (p,q) = e^{-4\pi i \sigma ((x,\omega), (p,q))}  T  f (2(x-p,\omega-q)). $
\qed
\end{proof}

Finally, if $\Fur _\sigma $ denotes the symplectic Fourier transform in $ L^2 (\mR^{2d}) $:
\begin{eqnarray*}
\Fur _\sigma ( F(x,\omega)) (p,q) & = & \int_{\mR^{2d}}
e^{-2\pi i \sigma ((x,\omega), (p,q))} F(x,\omega)
dx d\omega \\
& = & \int_{\mR^{2d}} e^{-2\pi i (\omega  p - x  q)} F(x,\omega)
dx d\omega,
\end{eqnarray*}
then the Grossmann-Royer operator and the Heisenberg-Weyl  operator are the symplectic Fourier transforms of each other.

\begin{proposition} \label{symplectic}
Let there be given  $f, g \in L^2 (\mathbb{R}^d)$ and let $\Fur _\sigma $ be the symplectic Fourier transform. Then
$$
R f (p,q) = 2^{-d} \Fur _\sigma  (T  (f(t) )(x,\omega) ) (-p,-q).
$$
\end{proposition}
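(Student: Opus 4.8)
The identity is purely computational: it follows by unfolding the definitions of $\Fur_\sigma$ and of the Heisenberg-Weyl operator \eqref{HWOp} and carrying out the resulting double integral, in exactly the distributional spirit already used in the proof of Proposition~\ref{svojstva}(4) and in the formula for $\Fur(R_g f)$ above. The plan is as follows. View $(x,\omega)\mapsto T(f(t))(x,\omega)$ as an $L^2(\mathbb{R}^d_t)$-valued function, substitute its explicit form $T(f(t))(x,\omega)=e^{2\pi i\omega(t-x/2)}f(t-x)$ from \eqref{HWOp} into the definition of the symplectic Fourier transform, and evaluate at the phase-space point prescribed in the statement. For each fixed $t$ this produces an integral over $(x,\omega)\in\mathbb{R}^{2d}$ whose integrand is a single pure exponential times $f(t-x)$.

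Then I would carry out the $(x,\omega)$-integration in two moves. First, collect the part of the exponent that is linear in $\omega$: the symplectic phase, together with the $e^{-\pi i\omega x}$ and $e^{2\pi i\omega t}$ coming from \eqref{HWOp}, combines into a single phase of the form $e^{2\pi i\omega(t-x/2+c)}$, with the constant $c$ fixed by the chosen evaluation point, while the remaining factor depends on $x$ alone. Integrating first in $\omega$ and using $\int e^{2\pi i\omega\cdot u}\,d\omega=\delta(u)$ in $\mathcal{S}'$ — exactly as elsewhere in this section — collapses the $\omega$-integral to a Dirac mass $\delta(t-x/2+c)$. Second, rescale: since the argument of the delta carries the factor $\tfrac12$, one has $\delta(t-x/2+c)=2^{d}\,\delta\bigl(x-2(t+c)\bigr)$ on $\mathbb{R}^d$, and it is precisely this Jacobian $2^{d}$ that accounts for the normalizing constant $2^{-d}$ in the statement. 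Performing the remaining $x$-integration then evaluates the $x$-dependent factor at $x=2(t+c)$ and multiplies by $2^{d}$; a one-line simplification of the surviving phase gives $2^{d}\,e^{4\pi iq(t-p)}f(2p-t)$, which is $2^{d}R f(p,q)$ by \eqref{GROp}. Dividing by $2^{d}$ is the assertion.

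There is nothing conceptually hard here; the one step requiring care — and the only place where the result could go wrong — is the bookkeeping of the $2\pi$-normalized Fourier transform together with the change of variables $x\mapsto x/2$ hidden inside the Dirac mass, since this is what produces the factor $2^{-d}$ on the right-hand side. As in the companion proofs, one should also record at the outset that the double integral is not absolutely convergent, so strictly one first tests against a Schwartz function on $\mathbb{R}^{2d}$ and interchanges the order of integration by Fubini before invoking the Fourier-inversion identity.
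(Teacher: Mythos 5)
Your proposal is correct and follows essentially the same route as the paper: substitute the explicit form of $T$ from \eqref{HWOp} into the definition of $\Fur_\sigma$ evaluated at $(-p,-q)$, integrate in $\omega$ first via the distributional identity $\int e^{2\pi i\omega\cdot u}\,d\omega=\delta(u)$, and let the rescaling $\delta(t-p-\tfrac{x}{2})=2^{d}\delta\bigl(x-2(t-p)\bigr)$ supply the factor $2^{d}$ before evaluating to $2^{d}e^{4\pi iq(t-p)}f(2p-t)=2^{d}Rf(p,q)$. The only difference is that you make the oscillatory-integral justification (testing against Schwartz functions before interchanging integrals) explicit, which the paper handles with its standing remark about interpreting such integrals distributionally.
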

\begin{proof}
\begin{eqnarray*}
\Fur _\sigma  (T  (f(t) )(x,\omega) ) (-p,-q) & = &  \int_{\mR^{2d}} e^{2\pi i (x  q -\omega  p  )}
e^{2\pi i \omega (t- \frac{x}{2})} f(t-x) dx d\omega
\\
& = &  \int_{\mR^{d}} \left ( \int_{\mR^{d}} e^{2\pi i \omega(t-  p -\frac{x}{2} )}  d\omega \right )
e^{2\pi i q x)} f(t-x) dx \\
& = &  \int_{\mR^{d}} \delta (t-  p -\frac{x}{2} ) e^{2\pi i q x} f(t-x) dx \\
& = &  2^d e^{4\pi i q (t-p)} f(t-2(t-p)) dx = 2^d  R f (p,q),
\end{eqnarray*}
where we again used the Fourier inversion formula in the sence of distributions.
\qed
\end{proof}

Since it is easy to see that $\Fur _\sigma $ is an involution, we also have
$$
T f (p,q) = 2^{d} \Fur _\sigma  (R  (f(t) )(x,\omega) ) (-p,-q).
$$

\begin{remark}
The Weyl operator $L_a $ with the symbol $a$ is introduced in \cite[Definition 37]{deGosson2017} by the means of the
symplectic Fourier transform $\Fur _\sigma a$ of the symbol and the Heisenberg-Weyl  operator:
$$ L_a f (t) = \int_{\mR^{2d}} \Fur _\sigma a (x, \omega) T (f(t) )(x,\omega) dx d\omega.$$
There it is shown that such definition coincides with the usual one, see Section \ref{localization},
as well as with the representation given by Lemma \ref{Weyl psido char}.
\end{remark}

\section{Gelfand--Shilov spaces} \label{Gelfand}

Problems of regularity of solutions to partial differential equations (PDEs)
play a central role in the modern theory of PDEs. When solutions of certain PDEs are smooth but not analytic,
several intermediate spaces of functions are proposed in order to describe its decrease at infinity
and also the regularity in $ \mR ^d$. In particular,
in the study of properties of solutions of certain parabolic initial-value
problems Gelfand and Shilov introduced
{\em the spaces of type $S$}
in \cite{GS55}.  Such spaces provide uniqueness and correctness classes for
solutions of Cauchy problems, \cite{GS}.
We refer to  \cite{GS} for the fundamental properties of such spaces which are
afterwards called Gelfand-Shilov spaces.

More recently, Gelfand-Shilov spaces were used in \cite{CGR-1,CGR-2}
to describe exponential decay and holomorphic extension of solutions to globally
elliptic equations, and in \cite{LMPSX} in the regularizing properties of the Boltzmann equation.
We refer to \cite{NR} for a recent overview  and for applications in quantum mechanics and traveling waves,
and to \cite{Toft-2012} for the properties of the Bargmann transform on Gelfand-Shilov spaces.
The original definition is generalized already in \cite[Ch. IV, App. 1]{GS},
and more general decay and regularity conditions can be systematically  studied
by using Komatsu's approach to ultradifferentiable functions developed in \cite{K}.
An interesting extension based on the iterates of the harmonic oscillator is studied in \cite{Toft-2017}
under the name Pilipovi\'c spaces, see also \cite{P1}.

\par

In the context of time-frequency analysis, certian Gelfand-Shilov spaces
can be described in terms of modulation spaces \cite{Gro,GZ}.
The corresponding pseudodifferential calculus is developed in \cite{Toft-2012,Toft-2}.
Gelfand-Shilov spaces are also used in the study of
time-frequency localization operators in \cite{CPRT2, CPRT1,Teof2015}, thus extending the
context of the pioneering results of Cordero and Grochenig \cite{CG02}.

In this section we introduce Gelfand-Shilov spaces
and list important equivalent characterizations. We also present the kernel theorem
which will be used in the study of localization operators.

\subsection{Definition}

The regularity and decay properties of elements of Gelfand-Shilov spaces
are initially measured with respect to sequences of the form $ M_p = p^{\alpha p},$
$ p \in \mN,$ $ \alpha > 0 $ or, equivalently, the Gevrey sequences
$ M_p = p!^{\alpha},$ $ p \in \mN,$ $ \alpha > 0 $.

We follow here Komatsu's approach \cite{K} to spaces of ultra-differentiable functions
to extend the original definition as follows.

Let $ (M_p)_{p \in \mN_0} $ be a sequence of positive numbers monotonically
increasing to infinity which satisfies:

\noindent $(M.1)\;\;$ $ M_p ^2 \leq M_{p-1} M_{p+1}, \;\;\; p \in
\mN; $

\noindent $(M.2) \;\;$ There exist positive constants $ A,H $ such
that
$$
 M_{p} \leq A H^p \mbox{ min }_{0 \leq q \leq p} M_{p-q} M_{q},
\;\; p,q \in \mN_0,
$$
or, equivalently, there exist positive constants $ A,H $ such that
$$
 M_{p+q} \leq A H^{p+q} M_{p} M_{q},
\;\; p,q \in \mN_0;
$$
We assume $ M_0 = 1, $ and that $ M_p ^{1/p } $ is bounded below by
a positive constant.

\begin{remark}
To give an example, we describe (M.1) and (M.2) as follows. Let
$(s_p)_{p \in \mN_0}$ be a sequence of positive numbers monotonically increasing to infinity
($ s_p \nearrow \infty $) so that for every $p, q
\in \mathbb{N}_0$ there exist $A, H>0$ such that
\begin{equation} \label{(M.2)}
\prod_{j=1} ^{q} s_{p+j} =  s_{p+1} \cdots s_{p+q}\leq AH^{p} s_{1} \cdots s_{q} =  AH^{p} \prod_{j=1} ^{q} s_{j}.
\end{equation}
Then  the sequence $ (S_p)_{p \in \mN_0} $ given by $S_p =  \prod_{j=1} ^{p} s_j$, $S_0=1,$
satisfies  $(M.1)$ and $(M.2) $.

Conversely, if $ (S_p)_{p \in \mN_0} $ given by $S_p =  \prod_{j=1} ^{p} s_j$, $s_j > 0, $ $ j \in \mathbb{N},$ $S_0=1,$
satisfies $(M.1)$ then $ (s_p)_{p \in \mN_0} $ increases to
infinity, and if it satisfies $(M.2)$ then (\ref{(M.2)}) holds.
\end{remark}

Let $ (M_p)_{p \in \mN_0} $ and $ (N_q)_{q \in \mN_0} $ be sequences
which satisfy $ (M.1). $ We write $ M_p \subset N_q $ ($ (M_p) \prec
(N_q), $ respectively) if there are constants $ H,C > 0 $ (for any
$H>0$ there is a constant $ C>0,$ respectively) such that $ M_p \leq
C H^p N_p, $ $ p \in \mN_0.$ Also, $ (M_p)_{p \in \mN_0} $ and $
(N_q)_{q \in \mN_0} $ are said to be equivalent if $ M_p \subset N_q
$ and $ N_q \subset M_p $ hold.

\begin{definition} \label{GSoftypeS}
Let there be given sequences of positive numbers $ (M_p)_{p \in
\mN_0} $ and $ (N_q)_{q \in \mN_0} $ which satisfy $ (M.1) $ and $
(M.2).$ Let $ \cS^{N_{q} , B} _{M_{p}, A}
(\R^d) $ be defined by
$$
 \cS^{N_{q} , B} _{M_{p}, A} (\R^d) =
\{ f \in C^\infty(\R^d) \; |\;
 \| x^{\alpha} \partial^{\beta}  f  \|_{L^\infty} \leq
 C A^{\alpha} M_{|\alpha|} B^{\beta} N_{|\beta|}, \;\;
\forall \alpha,\beta \in \N_{0} ^d \},
$$
for some positive constant $C,$ where $ A = (A_1,\dots,A_d),$ $ B =
(B_1,\dots,B_d),$ $A, B>0.$

Gelfand-Shilov spaces $ \Sigma^{N_q} _{M_p} (\R^d) $ and $  \cS^{N_q} _{M_p} (\R^d) $
are projective and inductive limits
of the (Fr\'echet) spaces $ \cS^{N_{q} , B} _{M_{p}, A}
(\R^d) $ with respect to $A$ and $B$:
$$
\Sigma^{N_q} _{M_p} (\R^d) : = {\rm proj} \lim_{A>0, B>0} \cS^{N_q , B}
_{M_p, A} (\R^d) ; \;\;\; \cS^{N_q} _{M_p} (\R^d)  : = {\rm ind} \lim_{A>0, B>0}
\cS^{N_q , B} _{M_p, A}  (\R^d).
$$

The corresponding dual spaces of $\Sigma^{N_q} _{M_p} (\R^d) $ and $\cS^{N_q} _{M_p} (\R^d) $
are the spaces of ultradistributions of Beurling and Roumier type respectively:
$$
(\Sigma^{N_q} _{M_p} )' (\R^d) : = {\rm ind} \lim_{A>0, B>0}
(\cS^{N_q , B} _{M_p, A})' (\R^d) ;
$$
$$
(\cS^{N_q} _{M_p})' (\R^d)  : = {\rm proj} \lim_{A>0, B>0}
(\cS^{N_q , B} _{M_p, A})'  (\R^d).
$$
\end{definition}

\par

Of course, for certain choices of the sequences $ (M_p)_{p \in \mN_0} $ and $ (N_q)_{q \in \mN_0} $
the spaces $ \Sigma^{N_q} _{M_p} (\R^d) $ and $  \cS^{N_q} _{M_p} (\R^d) $  are trivial, i.e. they
contain only the function $ \phi \equiv 0$. Nontrivial Gelfand-Shilov spaces are closed under translation, dilation,
multiplication with $x\in \R^d,$ and differentiation. Moreover, they are closed under the action of certain differential
operators of infinite order (ultradifferentiable operators in the terminology of Komatsu).
We refer to \cite{K} for topological properties in a more general context of test function spaces for ultradistributions.

When $ (M_p)_{p \in
\mN_0} $ and $ (N_q)_{q \in \mN_0} $ are Gevrey sequences:
$ M_p = p!^{r}, $ $ p\in \mN_{0}$  and $ N_q = q!^{s}, $  $ q\in
\mN_{0}$, for some $r,s\geq 0$, then we use the notation
$$ {\mathcal S}^{N_q} _{M_p} (\R^d) =
{\mathcal S}^{s} _{r} (\R^d) \;\;\; \text{and} \;\;\; \Sigma^{N_q} _{M_p} (\R^d) =  \Sigma^{s} _{r} (\R^d).
$$
If, in addition, $ s = r, $ then we put
$$ {\mathcal S}^{\{ s\}}  (\R^d) =
{\mathcal S}^{s} _{s} (\R^d)  \;\;\; \text{and} \;\;\; \Sigma^{(s)} (\R^d) =  \Sigma^{s} _{s} (\R^d).
$$

The choice of Gevrey sequences (which is the most often used choice in the literature)
may serve well as an illuminating example in different contexts. In particular, when discussing
the nontriviality  we have the following:
\begin{enumerate}
\item the space $ {\mathcal S}^{s} _{r} (\R^d)$
is nontrivial if and only if $ s+r > 1 $, or $ s+r = 1$ and $sr>0$,
\item if $ s+r \geq 1 $ and $s<1$, then every  $ f \in {\mathcal S}^{s} _{r} (\R^d)$
can be extended to the complex domain as an entire function,
\item if $ s+r \geq 1 $ and $s=1$, then every  $ f \in {\mathcal S}^{s} _{r} (\R^d)$
can be extended to the complex domain as a holomorphic function in a strip.
\item the space $ \Sigma^{s} _{r} (\R^d)$
is nontrivial if and only if $ s+r > 1 $, or, if $ s+r = 1$ and $sr>0$ and $ (s,r)\neq (1/2,1/2)$.
\end{enumerate}

We refer to \cite{GS} or \cite{NR} for the proof in the case of $ {\mathcal S}^{s} _{r} (\R^d)$,
and to \cite{P1} for the spaces $  \Sigma^{s} _{r} (\R^d) $, see also \cite{Toft-2012}.

Whenever nontrivial, Gelfand-Shilov spaces contain "enough functions" in the following sense.
A test function space $ \Phi $ is "rich enough" if
$$ \int f(x) \varphi (x) dx = 0, \;\;\; \forall   \varphi \in \Phi
\Rightarrow f(x) \equiv 0 (a.e.).
$$

The discussion here above shows that Gelfand-Shilov classes $ {\mathcal S}^{s} _{r} (\R^d)$
consist of quasi-analytic functions when  $s \in (0,1)$.
This is in sharp contrast with e.g. Gevrey classes
$  G^{s} (\mathbb{R}^d),$ $s>1$, another family of functions commonly used in regularity theory
of partial differential equations, whose elements are always non-quasi-analytic.
We refer to \cite{R} for microlocal analysis in Gervey classes and note that
$ G^{s} _0  (\mathbb{R}^d) \hookrightarrow {\mathcal S}_s ^s (\mathbb{R}^d)
\hookrightarrow  G^{s} (\mathbb{R}^d),$ $ s > 1.$

\par

When the spaces are nontrivial we have dense and continuous
inclusions:
$$
\Sigma^{s} _{r}  (\R^d) \hookrightarrow {\mathcal S}^{s} _{r} (\R^d) \hookrightarrow {\mathcal S}  (\R^d).
$$

\par

In fact, $ {\mathcal S}  (\R^d) $ can be revealed as a limiting case of $ S^{s} _{r} (\R^d)$, i.e.
$$
{\mathcal S}  (\R^d) = {\mathcal S}^{\infty} _{\infty} (\R^d) = \lim_{s, r \rightarrow \infty} {\mathcal S}^{s} _{r} (\R^d),
$$
when the passage to the limit  $ s, r \rightarrow \infty$ is interpreted correctly, see \cite{GS}, page 169.

We refer to \cite{Toft-2017} where it is shown how to overcome the minimality condition
($ \Sigma^{1/2} _{1/2} (\R^d) = 0$) by transferring the estimates for
$ \| x^{\alpha} \partial^{\beta}  f  \|_{L^\infty} $ into the  estimates of the form
$ \|  H^N f  \|_{L^\infty}  \lesssim h^N (N!)^{2s},$ for some (for every ) $ h>0$,
where $H = |x|^{2} -  \Delta $ is the harmonic oscillator.

\par

In what follows, the special role will be played by the Gelfand-Shilov space of analytic functions   $  {\mathcal S} ^{(1)} (\mathbb{R}^d ) := \Sigma^{1} _{1}  (\R^d)$. According to Theorem \ref{GS-characterization} here below, we have
$$ f \in  {\mathcal S}^{( 1)} (\mathbb{R}^d) \Longleftrightarrow
\sup_{x\in \mathbb{R}^d } |f(x)  e^{h\cdot |x|}| < \infty \;
\; \text{and} \;
\sup_{\omega \in \mathbb{R}^d } | \hat f (\omega)  e^{h\cdot |\omega|} | < \infty, \;\; \forall  h > 0.
$$
Any $ f \in  {\mathcal S}^{( 1)} (\mathbb{R}^d) $ can be extended to a
holomorphic function $f(x+iy)$ in the strip
$ \{ x+iy \in \mC ^d \; : \; |y| < T \} $ some $ T>0$, \cite{GS, NR}.
The dual space of  $  {\mathcal S} ^{(1)} (\mathbb{R}^d ) $ will be denoted by
$ {\mathcal S} ^{(1)'}   (\mathbb{R}^d ). $
In fact,  $  {\mathcal S} ^{(1)} (\mathbb{R}^d ) $ is isomorphic to the Sato test function space for the space of Fourier hyperfunctions $ {\mathcal S} ^{(1)'}   (\mathbb{R}^d ), $ see \cite{CCK1994}.

\subsection{Equivalent conditions}

In this subsection we recall the well known equivalent characterization of Gelfand-Shilov spaces which
 shows the important behavior of Gelfand-Shilov spaces under the action of the Fourier transform.
Already in \cite{GS} it is shown that the Fourier transform is a topological isomorphism between
$ {\mathcal S}_r ^s  (\R^d)$ and $ {\mathcal S}_s ^r  (\R^d)$
$( {\mathcal F} ( {\mathcal S}_r ^s) = {\mathcal S}_s ^r)$,
which extends to a continuous linear transform
from $ ({\mathcal S}_r ^s) '  (\R^d)$ onto $ ({\mathcal S}_s ^r)'  (\R^d)$.
In particular, if $ s = r $ and $s \geq 1/2 $
then $ {\mathcal F} ( {\mathcal S}_s ^s) (\R^d)= {\mathcal S}_s ^s (\R^d).$
Similar assertions hold for $\Sigma^{s} _{r} (\R^d).$

This invariance properties easily follow from the following theorem which also
enlightens fundamental  properties of Gelfand-Shilov spaces implicitly contained in their definition.
Among other things, it states that the decay and regularity estimates of $f \in
{\mathcal S}^{N_q} _{M_p} (\R^d)$ can be studied separately.

Before we state the theorem, we introduce another notion.
The {\em associated function} for a given sequence  $ (M_p) $ is defined by
$$
M(\rho) = \sup_{p\in \mN_0} \ln \frac{\rho^p M_0}{M_p}, \;\;\; 0 <
\rho < \infty.
$$

For example, the associated function for the Gevrey sequence  $ M_p = p!^{r}, $ $ p\in \mN_{0}$ behaves
at infinity  as $ |\cdot|^{1/r},$ cf. \cite{Pe}. In fact, the interplay between the defining sequence and its associated function
plays an important role in the theory of ultradistributions.

\begin{theorem} \label{GS-characterization}
Let there be given sequences of positive numbers $ (M_p)_{p \in
\mN_0} $ and $ (N_q)_{q \in \mN_0} $ which satisfy $ (M.1) $ and $
(M.2)$ and $ p!  \subset M_p N_p $ ($ p! \prec M_p N_p $,
respectively). Then the following conditions are equivalent:
\begin{enumerate}
\item $f \in {\mathcal S}^{N_q} _{M_p} (\R^d)$
($ f \in \Sigma^{N_q} _{M_p}(\R^d), $ respectively).
\item  There exist constants $A,B\in \R^d,$ $ A,B >0$
(for every   $A,B\in \R^d,$ $ A,B >0$ respectively), and there exist
$ C>0$ such that
$$
\| e^{M(|Ax|)} \partial^{q}  f (x) \|_{L^\infty} \leq C B^{q} N_{|q|}, \quad\forall
p,q\in \N^d_0.
$$
\item There exist constants $A,B\in \R^d,$ $ A,B >0$
(for every  $A,B\in \R^d,$ $ A,B >0,$ respectively), and there exist
$ C>0$ such that
$$ \| x^{p}  f (x) \|_{L^\infty} \leq C A^{p} M_{|p|} \quad\mbox{and}\quad
\| \partial^{q}  f (x) \|_{L^\infty} \leq C B^{q} N_{|q|}, \quad\forall
p,q\in \N^d_0. $$
\item There exist constants $A,B\in \R^d,$ $ A,B >0$
(for every  $A,B\in \R^d,$ $ A,B >0,$ respectively), and there exist
$ C>0$ such that
$$
\| x^{p}  f (x) \|_{L^\infty} \leq C A^{p} M_{|p|} \quad\mbox{and}\quad
\| \omega^{q}  \hat{f} (\omega) \|_{L^\infty} \leq C B^{q} N_{|q|}, \quad\forall
p,q\in \N^d_0.
$$
\item There exist constants $A,B\in \R^d,$ $ A,B >0$
(for every  $A,B\in \R^d,$ $ A,B >0,$ respectively), such that
$$ \|  f(x) \;  \|_{L^\infty} < \infty \quad\mbox{and}\quad
\| \hat f (\o) \; e^{N(|B\o|)} \|_{L^\infty} < \infty, $$ where $
M(\cdot) $ and $ N(\cdot) $ are the associated functions for the
sequences $ (M_p)_{p \in \mN_0} $ and $ (N_q)_{q \in \mN_0} $
respectively.
\end{enumerate}
\end{theorem}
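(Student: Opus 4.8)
The plan is to prove the cycle of implications $(1)\Rightarrow(3)\Rightarrow(4)\Rightarrow(5)\Rightarrow(2)\Rightarrow(1)$, treating the Roumieu ($\cS^{N_q}_{M_p}$) and Beurling ($\Sigma^{N_q}_{M_p}$) cases in parallel, since the quantifier changes (``there exist $A,B$'' versus ``for every $A,B$'') propagate uniformly through the estimates. The implication $(1)\Rightarrow(3)$ is essentially a restatement of the definition: from $\|x^\alpha\partial^\beta f\|_{L^\infty}\le CA^\alpha M_{|\alpha|}B^\beta N_{|\beta|}$ one recovers the two separate estimates by fixing $\beta=0$ (for the first) and, for the second, by using $(M.1)$ and $M_p^{1/p}$ bounded below to absorb the $x$-growth; the nontrivial content is the \emph{decoupling}, namely that the joint bound follows back from the separate ones, which is why $(2)\Rightarrow(1)$ is the crux.

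For $(3)\Rightarrow(4)$ I would pass from $\|\partial^q f\|_{L^\infty}\le CB^qN_{|q|}$ to the decay estimate on $\hat f$ via $\omega^q\hat f(\omega)=(2\pi i)^{-|q|}\widehat{\partial^q f}(\omega)$ together with an interpolation/Sobolev step: $\|\omega^q\hat f\|_{L^\infty}\lesssim \sum_{|r|\le d+1}\|\partial^r(\partial^q f)\|_{L^1}$, and the $L^1$ norms are controlled by the $L^\infty$ bounds on $\partial^{q+r}f$ after multiplying by $\la x\ra^{-(d+1)}$, which in turn costs only finitely many of the $\|x^p f\|_{L^\infty}$ bounds from $(3)$ (using $(M.1)$, $(M.2)$ to keep the constants in the right class). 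The step $(4)\Rightarrow(5)$ is the translation of the polynomial estimates $\|\omega^q\hat f\|_{L^\infty}\le CB^qN_{|q|}$ into the exponential estimate $\|\hat f\,e^{N(|B\omega|)}\|_{L^\infty}<\infty$: this is precisely the definition of the associated function $N(\rho)=\sup_p\ln(\rho^pN_0/N_p)$, since $e^{N(\rho)}=\sup_p \rho^p/N_p$, so $\sup_q|\hat f(\omega)|\,|B\omega|^q/N_{|q|}<\infty$ is equivalent (after adjusting $B$ by the constant $H$ from $(M.2)$ and absorbing multi-index versus scalar indexing) to the stated bound; the hypothesis $p!\subset M_pN_p$ guarantees the Fourier side lives in the right space so that $\|f\|_{L^\infty}<\infty$ makes sense.

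The implication $(5)\Rightarrow(2)$ reverses this: from the exponential decay of $\hat f$ one gets, for every (or some) $B'>0$, the bounds $\|\omega^q\hat f\|_{L^\infty}\le C(B')^qN_{|q|}$, hence $\|\partial^q f\|_{L^\infty}\lesssim\|\omega^q\hat f\|_{L^1}$, again paying finitely many derivatives and using $(M.2)$; combined with $\|f\|_{L^\infty}<\infty$ this yields the $e^{M(|Ax|)}$-weighted derivative estimate once one runs the same associated-function dictionary on the $M_p$ side, which requires extending the $L^\infty$-boundedness of $f$ to $x^pf$ by a symmetric Fourier argument. Finally $(2)\Rightarrow(1)$ unwinds $e^{M(|Ax|)}\ge |Ax|^{|p|}/M_{|p|}$ to get $\|x^p\partial^q f\|_{L^\infty}\le C'(A')^p M_{|p|}(B')^qN_{|q|}$, i.e. membership in $\cS^{N_q,B'}_{M_p,A'}$; the constants stay in the correct quantifier class thanks to $(M.2)$. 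The main obstacle throughout is bookkeeping: controlling how the constants $A,B,C$ and the auxiliary constants $A,H$ from $(M.1)$–$(M.2)$ deteriorate at each step, and checking that in the Beurling case the ``for every $A,B$'' quantifier survives each interpolation (one must choose the finitely many extra derivatives \emph{before} fixing the target constant). I expect the $(3)\!\Rightarrow\!(4)$ and $(5)\!\Rightarrow\!(2)$ interpolation steps — converting $L^\infty$ derivative bounds into $L^1$ bounds and back across the Fourier transform without losing the Gelfand--Shilov structure — to be where the real work lies, and I would lean on the standard embedding $\|u\|_{L^1}\lesssim\sum_{|r|\le d+1}\|\la x\ra^{d+1}\partial^r u\|_{L^\infty}$ to make them routine; see \cite{GS, NR, K} for the analogous arguments in the classical and Komatsu settings.
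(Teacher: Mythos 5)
Your chain $(1)\Rightarrow(3)\Rightarrow(4)\Rightarrow(5)\Rightarrow(2)\Rightarrow(1)$ locates the difficulty in the wrong place, and the two steps where the real content of the theorem lives are exactly the ones you treat as routine. The step $(2)\Rightarrow(1)$, which you call the crux, is in fact immediate: by definition of the associated function, $e^{M(|Ax|)}\geq |Ax|^{|p|}M_0/M_{|p|}$, so the weighted derivative bound in $(2)$ unwinds to $\|x^p\partial^q f\|_{L^\infty}\leq C A^{-|p|}M_{|p|}B^qN_{|q|}$ in one line (this is precisely the easy direction of the paper's own illustration of $1\Leftrightarrow 2$). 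By contrast, your $(3)\Rightarrow(4)$ does not go through as described: to bound $\|\partial^{q+r}f\|_{L^1}$ by $\|\langle x\rangle^{d+1}\partial^{q+r}f\|_{L^\infty}$ you need \emph{mixed} bounds $\|x^p\partial^{q+r}f\|_{L^\infty}$ for $|p|\leq d+1$ with constants of the form $A^pB^qM_{|p|}N_{|q|}$, and condition $(3)$ supplies only $\|x^pf\|_{L^\infty}$ (no derivatives) and $\|\partial^qf\|_{L^\infty}$ (no weight). Producing the mixed bounds from the separate ones is essentially the implication $(3)\Rightarrow(1)$, i.e.\ the nontrivial decoupling itself, so invoking "finitely many of the $\|x^pf\|_{L^\infty}$ bounds" is circular; a genuine interpolation inequality of Landau--Kolmogorov type (with constants tracked through $(M.1)$, $(M.2)$) or a detour through the Fourier side is needed here.

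The second and more serious gap is $(5)\Rightarrow(2)$. As printed, condition $(5)$ contains no decay information on $f$ whatsoever (only $\|f\|_{L^\infty}<\infty$; the weight $e^{M(|Ax|)}$ on $f$ is evidently missing from the statement, a typo you should have flagged, since otherwise any $f$ with $\hat f\in C_c^\infty$ is a counterexample to $(5)\Rightarrow(1)$). Decay of $\hat f$ yields smoothness bounds $\|\partial^qf\|_{L^\infty}\lesssim B^qN_{|q|}$, never decay of $f$; and since $(5)$ gives no control of derivatives of $\hat f$, the "symmetric Fourier argument" you invoke to recover bounds on $x^pf$ simply does not exist. Even after correcting $(5)$, the passage from $|f|\lesssim e^{-M(|Ax|)}$ plus $\|\partial^qf\|_{L^\infty}\lesssim B^qN_{|q|}$ to the \emph{joint} estimate $\|e^{M(|Ax|)}\partial^qf\|_{L^\infty}\lesssim B^qN_{|q|}$ requires a quantitative intermediate-derivative argument on cubes (using $(M.1)$ log-convexity and $(M.2)$ to halve the associated function, e.g.\ $N_{2q}\lesssim H^{2q}N_q^2$ and $M(\rho)\leq 2M(\rho/H)+c$), or the complex-analytic machinery of Chung--Chung--Kim; none of this appears in your sketch. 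Note also that the paper does not attempt the full cycle at all: it cites \cite{CCK} for the theorem and only illustrates $1\Leftrightarrow 2$ for Gevrey sequences in dimension one via the associated-function series trick, so a complete self-contained proof along your lines must supply exactly the interpolation steps you dismiss as bookkeeping.
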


\begin{proof}
Theorem \ref{GS-characterization} is for the first time proved in \cite{CCK} and reinvented many times afterwards,
see e.g. \cite{GZ,KPP, PT1, CPRT1, NR}. As an illustration, and to give a flavor of the technique, we show {\em 1.} $ \Leftrightarrow $ {\em 2}. For the simplicity, we observe the Gevrey sequences  $ M_p = p!^{r} $ and  $ N_q = q!^{s}, $ $ p\in \mN_{0}$,
$ r, s  > 0$.

Recall, $f \in {\mathcal S}^{N_q} _{M_p} (\R^d) = {\mathcal S} ^{s} _{r}  (\mR^d)$ if and only if there exist constants
$  h,C > 0 $ such that
$$
\sup_{x \in \mR^d} \left |
x^{p} \partial^q  f (x) \right| \leq C h^{|p| +
|q|} p! ^{r} q!^{s}, \;\; \forall p, q \in
\mN_{0} ^d.
$$
To  avoid the use of inequalities related to multi-indices we consider $d= 1.$
Put $F_q (x) = \partial^q  f (x) / (h^{|q|} q!^{s})$. We have
$$
\sup_{x} h^{-|p|} p !^{-r}  | x|^{p} | F_q (x) | \leq C,
$$
so that
$$
\sup_{x}  h^{-|p|/r} p !^{-1}  | x|^{p/r} | F_q (x)  |^{1/r} \leq C^{1/r}
$$
uniformly in $p$.  Therefore
$$
 \sup_{x} \sum_{p \in \mathbb{N}_0}  ( \frac{(| x|h^{-1})^{1/r}}{2} )^{|p|}  p !^{-1}
 | F_q (x)  |^{1/r} \leq C^{1/r} \sum_{p \in \mathbb{N}_0} \frac{1}{2^{|p|}}.
$$
Put $A= h^{-1/r} 2^{-1} $, and conclude that there exist constants $A,B,C>0$ such that
$$
\left | \partial^q  f (x) \right| \leq  C B^{|q|}
q!^{s} e^{-A|x|^{1/r}}, \;\;
\forall x \in \mR, \;\;
\forall q \in \mN_{0},
$$
which gives {\em 2.}

Assume now that {\em 2.} holds.
Put $F_q (x) = \partial^q  f (x) / (C^{|q|} q!^{s}).$  Then,
$ \displaystyle \left | \partial^q  f (x) \right| \leq  C^{1+ |q|}
q!^{s} e^{-A|x|^{1/r}}$ for all $ x \in \mR$ implies the following chain of inclusions.
\begin{eqnarray*}
\left | F_q (x)  \right|^{1/r} e^{\frac{A}{r} |x|^{1/r} } < \infty
& \Rightarrow & \sup_{x} \sum_{p \in \mathbb{N}_0} \frac{1}{p !}
(\frac{A}{r})^p  |x|^{p/r} \left | F_q (x)  \right|^{1/r} < \infty \\
& \Rightarrow & \sup_{x}  \frac{1}{p !}  (\frac{A}{r})^p
 |x|^{p/r} \left | F_q (x)  \right|^{1/r} < \infty \\
& \Rightarrow & \sup_{x}  \frac{1}{p !^{r}}  (\frac{A}{r})^{r p}
 |x|^{p} \left | F_q (x)  \right| < \infty \\
& \Rightarrow &  \left  | x^{p}   \partial^q  f (x)   \right| \leq \tilde C \left ( (\frac{r }{A})^{r} \right )^{|p|} C^{|q|} p !^{r} q!^{s} \\
& \Rightarrow &  \left  | x^{p}  F_q (x)  \right| \leq \tilde Ch^{|p| +|q|} p !^{r} q!^{s},
\end{eqnarray*}
so that $ f \in {\mathcal S}^{N_q} _{M_p} (\R).$

The proof for $\Sigma^{N_q} _{M_p}(\R^d)$ is (almost) the same.
\qed
\end{proof}

By the above characterization $ {\mathcal F}  {\mathcal S}^{N_q} _{M_p}(\rd)  =  {\mathcal S}^{M_p} _{N_q} (\rd).  $
Observe that when  $M_p$ and $N_q$ are chosen to be Gevrey sequences, then $\cS^{1/2} _{1/2} (\rd)$ is the smallest non-empty Gelfand-Shilov space invariant
under the Fourier transform, see also Remark \ref{fine tuning} below. Theorem \ref{GS-characterization}
implies that $f\in\cS^{1/2} _{1/2}(\rd)$ if and only if
$f\in\cC^\infty(\rd)$ and there exist constants $h>0,k>0$ such that
\begin{equation} \label{condd1}
 \|f  e^{h |\cdot|^{2}} \|_{L^\infty} < \infty \quad\mbox{and}\quad
 \| \hat f  e^{k |\cdot|^{2}} \|_{L^\infty} < \infty.
\end{equation}
Therefore the Hermite functions
given by \eqref{herm1} belong to $\cS^{1/2} _{1/2} (\rd)$. This is an important fact when dealing with Gelfand-Shilov spaces,
cf. \cite{L06, P1}. We refer to \cite{Toft-2017} for the situation below the "critical exponent" $1/2$.

\begin{remark} \label{fine tuning}
Note that $  \Sigma^{1/2} _{1/2} (\rd)= \{ 0 \}$ and $  \Sigma^{s} _{s} (\rd)$ is
dense in the Schwartz space whenever $s>1/2$. One may consider a "fine tuning", that is the
spaces $\Sigma^{N_q} _{M_p} (\rd)$ such that
$$
\{ 0 \} = \Sigma^{1/2} _{1/2} (\rd) \hookrightarrow \Sigma^{N_q} _{M_p} (\rd)\hookrightarrow
{\mathcal S}^{N_q} _{M_p} (\rd) \hookrightarrow \Sigma^{s} _{s}(\rd), \;\;\; s > 1/2.
$$
For that reason, we define
sequences $ ( M_p )_{p \in \mN_0} $  and $ ( N_q )_{q \in \mN_0} $
by
\begin{equation} \label{big-seq-cond}
M_p := p!^{\frac{1}{2}} \prod_{k =0} ^p l_k =  p!^{\frac{1}{2}}
L_{p}, \;\;\; p \in \mN_0,\;\;\; N_q := q!^{\frac{1}{2}} \prod_{k
=0} ^q r_k =  q!^{\frac{1}{2}} R_q, \;\;\; q \in \mN_0
\end{equation}
where  $(r_p)_{p \in \mN_0}  $ and $(l_p)_{p \in \mN_0} $
are sequences of positive numbers monotonically increasing to infinity
such that \eqref{(M.2)} holds with the letter $s$ replaced by $r$ and $l$ respectively
and which satisfy: For
every $\alpha\in(0,1]$ and every $k>1$ so that $kp\in\mathbb{N},
p\in \mathbb{N},$
\begin{equation} \label{seq-cond}
\max\{(\frac{r_{kp}}{r_p})^2,(\frac{l_{kp}}{l_p})^2 \} \leq
k^\alpha, \;\;\; p\in\mathbb{N}.
\end{equation}
Then $ p! \prec M_p N_p$ and the sequences
 $ (R_p)_{p \in \mN_0} $ and $ (L_p)_{p \in \mN_0} $ ($R_p=r_1
\cdots r_p$,  $L_p=l_1  \cdots l_p, p\in \mathbb{N}$ $R_0=1,$ and $L_0=1$)
satisfy conditions $(M.1)$ and $(M.2) $. Moreover,
$$\max\{R_p,L_p\}\leq p!^{\alpha/2}, p\in\mathbb{N},$$
for every $\alpha\in (0,1]$.
(For $ p,q,k \in \N_{0} ^d $ we have
$ L_{|p|} =  \prod_{|k| \leq |p|} l_{|k|},$ and $ R_{|q|} =
\prod_{|k| \leq |q|} r_{|q|}.$)
Such sequences are used in the study of localization operators in the context of quasianalytic spaces in \cite{CPRT2}.
\end{remark}

\subsection{Kernel theorem} \label{sekcijajezgro}

For the study the action of linear operators it is convenient to use their kernels. In particular, when dealing with multilinear extensions of localization operators, the kernel theorem for Glefand-Shilov spaces appears to be a crucial tool, cf. \cite{Teof2019}.
Such theorems extend the famous Schwartz
kernel theorem (see \cite{Svarc,Trev}) to the spaces of ultradistributions. We refer to
\cite{Prang} for the proof in the case of non-quasianalytic Gelfand-Shilov spaces, and here we give a sketch of the proof
for a general case from \cite{Teof2015}. The only difference is that in quasianalytic case, the
density arguments from \cite{Prang} can not be used. Instead, we use arguments based on Hermite expansions in Gelfand-Shilov spaces, see \cite{L06, LCPT}.

\par

We need additional conditions for a
sequence of positive numbers $ (M_p)_{p \in \mN_0} $:

\noindent $\{N.1\} \;\;$
There exist positive constants $ A,H $ such
that
$$
 p!^{1/2} \leq A H^p  M_{p}, \;\; p \in \mN_0,
$$
and

\noindent $(N.1) \;\;$
For every $H>0$ there exists $ A >0$ such
that
$$
 p!^{1/2} \leq A H^p  M_{p}, \;\; p \in \mN_0.
$$
The  conditions $\{N.1\} $ and $(N.1)$
are taken from  \cite{LCP} where
they are called  {\em nontriviality conditions} for the spaces
${\mathcal S}^{M_p} _{M_p} (\R^{d})$ and $\Sigma^{M_p} _{M_p} (\R^{d})$ respectively.
In fact, the following lemma is proved in \cite{L06}.

\begin{lemma}
Let there be given a sequence of positive numbers $ (M_p)_{p \in \mN_0} $ which satisfies $ (M.1) $ and

$ (M.2)' \;\;$ There exist positive constants $ A,H $ such
that $ \displaystyle M_{p+1} \leq A H^p  M_{p}, \;\; p \in \mN_0. $

Then the following are equivalent:
\begin{enumerate} \label{ekvivHerm}
\item The Hermite functions are contained in
${\mathcal S}^{N_q} _{M_p} (\R^d)$
(in $ \Sigma^{N_q} _{M_p}(\R^d), $ respectively).
\item  $ (M_p)_{p \in \mN_0} $  satisfies $ \{N.1\} $
($ (M_p)_{p \in \mN_0} $  satisfies $ (N.1) $, respectively).
\item There are positive constants $ A,B$ and $H $ such
that
$$
 p!^{1/2} M_q \leq A B^{p+q} H^{p}  M_{p+q}, \;\; p,q \in \mN_0.
$$
(There is $B>0$ such that for every $H >0$ there exists $ A >0$ such
that
$$
 p!^{1/2} M_q \leq A B^{p+q} H^{p}  M_{p+q}, \;\; p,q \in \mN_0.
$$
\end{enumerate}
\end{lemma}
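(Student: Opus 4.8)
The plan is to run the equivalences as $(2)\Leftrightarrow(3)$ (pure sequence bookkeeping) and $(1)\Leftrightarrow(3)$ (two-sided estimates on $\|x^\alpha\partial^\beta H_n\|_{L^\infty}$), the analytic substance living entirely in the latter. For $(3)\Rightarrow(2)$, set $q=0$ and use $M_0=1$. For $(2)\Rightarrow(3)$, log-convexity $(M.1)$ gives $M_pM_q\le M_0M_{p+q}=M_{p+q}$, so from $p!^{1/2}\le AH^pM_p$ we get $p!^{1/2}M_q\le AH^pM_pM_q\le AH^pM_{p+q}$, i.e. $(3)$. The Beurling/Roumieu dichotomy ($(N.1)$ ``for every $H$'' versus $\{N.1\}$ ``there exist $A,H$'') is inherited verbatim, since the quantifier on the new constants never changes; $(M.2)$, resp. $(M.2)'$, enters only to keep the routine absorptions of geometric factors legitimate.

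For $(3)\Rightarrow(1)$ --- which I expect to be the main obstacle --- reduce to $d=1$ by tensorisation, since $H_n=\prod_jH_{n_j}$ and the sup-norm factorises. In one variable use the ladder operators $A_\pm=\tfrac1{\sqrt2}\big(x\mp\tfrac{d}{dx}\big)$ adapted to \eqref{herm1}, for which $A_+H_n=\sqrt{n+1}\,H_{n+1}$, $A_-H_n=\sqrt n\,H_{n-1}$, and write $x=\tfrac1{\sqrt2}(A_-+A_+)$, $\tfrac{d}{dx}=\tfrac1{\sqrt2}(A_--A_+)$. Then $x^\alpha\partial^\beta$ is a sum of at most $2^{\alpha+\beta}$ words in $A_\pm$ of length $\alpha+\beta$, and a word applied to $H_n$ yields $c\,H_m$ with $0\le m\le n+\alpha+\beta$ and $|c|\le\big((n+\alpha+\beta)!/n!\big)^{1/2}$; since $\sup_m\|H_m\|_{L^\infty}<\infty$ (even $\asymp m^{-1/12}$, though $\lesssim m^{1/4}$ already suffices) one obtains, $n$ fixed,
$$\|x^\alpha\partial^\beta H_n\|_{L^\infty}\ \le\ C_n\,D^{\,|\alpha|+|\beta|}\,(\alpha!)^{1/2}(\beta!)^{1/2}\,,$$
using $(n+\ell)!\le C_n2^{\ell}\ell!$ (here $(M.2)'$ is handy) and $(\alpha+\beta)!\le 2^{\alpha+\beta}\alpha!\,\beta!$. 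Now invoke $(2)$, equivalently $(3)$: $\{N.1\}$ (resp. $(N.1)$) lets one replace $(\alpha!)^{1/2}$ by $M_{|\alpha|}$ and $(\beta!)^{1/2}$ by $N_{|\beta|}$ --- the latter by the same condition for $(N_q)$, which is legitimate because, $H_n$ being a Fourier eigenfunction and $\mathcal{F}\,\mathcal S^{N_q}_{M_p}=\mathcal S^{M_p}_{N_q}$ by Theorem \ref{GS-characterization}, conditions $(1)$--$(3)$ are symmetric in $(M_p)$ and $(N_q)$. This is exactly the defining estimate of $\mathcal S^{N_q}_{M_p}$, resp. $\Sigma^{N_q}_{M_p}$, for $H_n$.

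For $(1)\Rightarrow(2)$ it suffices to test on the ground state $H_0(x)=\pi^{-d/4}e^{-|x|^2/2}$. Taking $\beta=0$ in the membership estimate gives $\|x^\alpha H_0\|_{L^\infty}\le CA^\alpha M_{|\alpha|}$, while a direct maximisation (Stirling) yields $\|x^\alpha e^{-|x|^2/2}\|_{L^\infty}\ge c_0^{\,|\alpha|}(\alpha!)^{1/2}$; hence $p!^{1/2}\lesssim (A/c_0)^pM_p$, which is $\{N.1\}$ (resp. $(N.1)$) for $(M_p)$, and analogously, via $\alpha=0$ and $\partial^\beta H_0=\pi^{-d/4}(-1)^{|\beta|}\mathrm{He}_\beta(x)e^{-|x|^2/2}$ with $\|\mathrm{He}_\beta(x)e^{-|x|^2/2}\|_{L^\infty}\ge c_0^{\,|\beta|}(\beta!)^{1/2}$, for $(N_q)$. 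Together with $(2)\Leftrightarrow(3)$ this closes the cycle. (An alternative to the ladder-operator estimate in $(3)\Rightarrow(1)$ is to quote the characterisation of $\mathcal S^{N_q}_{M_p}$ by the decay of Hermite coefficients from \cite{L06,LCPT}, for which $H_n$ corresponds to the unit sequence $\delta_n$; but the elementary argument above is self-contained.)
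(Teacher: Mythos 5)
The paper does not actually prove Lemma \ref{ekvivHerm}: it refers to \cite[Remark 3.3]{L06}, so any self-contained argument is by necessity a different route. Your route is a reasonable one and, in outline, correct for what the lemma is really asserting. The bookkeeping $(2)\Leftrightarrow(3)$ is fine ($q=0$ with $M_0=1$ one way; $M_pM_q\le M_{p+q}$ from $(M.1)$ and $M_0=1$ the other way, with the Beurling quantifiers handled correctly because $B$ stays fixed while $H$ is free). The ladder-operator estimate is also sound: the coefficient of a word of length $\ell$ acting on $H_n$ is indeed at most $\big((n+\ell)!/n!\big)^{1/2}$ (every factor at step $i$ is at most $\sqrt{n+i}$), and together with $(n+\ell)!\le 2^n n!\,2^\ell \ell!$, $(\alpha+\beta)!\le 2^{\alpha+\beta}\alpha!\beta!$ and Cram\'er's bound this gives $\|x^\alpha\partial^\beta H_n\|_\infty\le C_n D^{\alpha+\beta}(\alpha!)^{1/2}(\beta!)^{1/2}$, which is exactly what is needed to feed condition $(2)$ into the defining estimate; the ground-state computation for $(1)\Rightarrow(2)$ is likewise correct. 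Two cosmetic points: $(M.2)'$ is not actually used where you invoke it (the factorial inequality is elementary), and whether the $H_n$ of \eqref{herm1} are eigenfunctions of the paper's $2\pi$-normalized Fourier transform is a matter of scaling — harmless, since Gelfand--Shilov membership is dilation invariant.

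The one genuine weak point is your treatment of the sequence $(N_q)$. You replace $(\beta!)^{1/2}$ by $N_{|\beta|}$ on the grounds that ``conditions $(1)$--$(3)$ are symmetric in $(M_p)$ and $(N_q)$''; this cannot be right as stated, because $(2)$ and $(3)$ involve only $(M_p)$ and impose nothing on $(N_q)$. Indeed, for an arbitrary independent $(N_q)$ the implication $(3)\Rightarrow(1)$ is false (take $N_q\equiv 1$: then $(2)$, $(3)$ hold for $M_p=p!$, but no Hermite function satisfies $\|\partial^\beta H_n\|_\infty\le CB^{|\beta|}$, since $\|\partial^\beta H_0\|_\infty\gtrsim c^{|\beta|}(\beta!)^{1/2}$ by your own lower bound). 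The resolution is that the lemma, as in \cite[Remark 3.3]{L06} and as it is used in the proof of Theorem \ref{kernelteorema}, concerns the Fourier-invariant spaces with $N_q=M_q$, i.e.\ ${\mathcal S}^{M_p}_{M_p}$ and $\Sigma^{M_p}_{M_p}$ — which is also why $\{N.1\}$ and $(N.1)$ are introduced in the paper as nontriviality conditions for precisely these spaces. With $N_q=M_q$ your final estimate plus condition $(2)$, applied once to $(\alpha!)^{1/2}$ and once to $(\beta!)^{1/2}$ (using $\alpha!\le|\alpha|!$ in dimension $d$), closes $(3)\Rightarrow(1)$ directly, and the Fourier-symmetry aside should simply be deleted rather than repaired.
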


\par

We note that the condition $ (M.2)' $ is weaker than the condition
$ (M.2)$, and refer to  \cite[Remark 3.3]{L06} for the proof of Lemma \ref{ekvivHerm}.

\begin{theorem} \label{kernelteorema}
Let there be given a sequence of positive numbers $ (M_p)_{p \in
\mN_0} $ which satisfies $ (M.1) $,  $ (M.2)$ and $\{N.1\} $.
Then the following isomorphisms hold:
\begin{enumerate}
\item
$ {\mathcal S}^{M_p} _{M_p} (\R^{d_1+d_2}) \cong
\displaystyle {\mathcal S}^{M_p} _{M_p} (\R^{d_1}) \hat{\otimes} {\mathcal S}^{M_p} _{M_p} (\R^{d_2}) $
$$ \cong
\mathcal{L}_b ( ({\mathcal S}^{M_p} _{M_p})' (\R^{d_1}), {\mathcal S}^{M_p} _{M_p} (\R^{d_2})),
$$
\item
$
({\mathcal S}^{M_p} _{M_p})' (\R^{d_1+d_2}) \cong
\displaystyle
({\mathcal S}^{M_p} _{M_p})' (\R^{d_1}) \hat{\otimes} ({\mathcal S}^{M_p} _{M_p})' (\R^{d_2}) $
$$ \cong
\mathcal{L}_b ( {\mathcal S}^{M_p} _{M_p} (\R^{d_1}), ({\mathcal S}^{M_p} _{M_p})' (\R^{d_2})).
$$

\par

If the sequence $ (M_p)_{p \in \mN_0} $ satisfies $ (M.1) $,  $ (M.2)$ and $(N.1) $
instead, then the following isomorphisms hold:

\item $ \displaystyle
\Sigma^{M_p} _{M_p} (\R^{d_1+d_2}) \cong
\Sigma^{M_p} _{M_p} (\R^{d_1}) \hat{\otimes} \Sigma^{M_p} _{M_p} (\R^{d_2}) $
$$
\cong \mathcal{L}_b ( (\Sigma^{M_p} _{M_p})' (\R^{d_1}), \Sigma^{M_p} _{M_p} (\R^{d_2})),
$$
\item
$ \displaystyle
(\Sigma^{M_p} _{M_p})' (\R^{d_1+d_2}) \cong
(\Sigma^{M_p} _{M_p})' (\R^{d_1}) \hat{\otimes} (\Sigma^{M_p} _{M_p})' (\R^{d_2}) $
$$ \cong
\mathcal{L}_b ( \Sigma^{M_p} _{M_p} (\R^{d_1}), (\Sigma^{M_p} _{M_p})' (\R^{d_2})).
$$
\end{enumerate}
\end{theorem}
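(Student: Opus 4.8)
The plan is to transport the statement to one about K\"othe-type sequence spaces via Hermite expansions, and then to invoke the abstract kernel theorem for nuclear locally convex spaces; in the quasianalytic (Roumieu) case this is what replaces the density arguments of \cite{Prang}, which are not available there.

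\emph{Hermite isomorphism.} First I would note that $(M.1)$ and $(M.2)$ imply $(M.2)'$, so together with $\{N.1\}$ (resp.\ $(N.1)$) Lemma \ref{ekvivHerm} applies and the Hermite functions $H_k$, $k\in\N_0^d$, from \eqref{herm1} lie in $\cS^{M_p}_{M_p}(\rd)$ (resp.\ $\Sigma^{M_p}_{M_p}(\rd)$). Since $\{H_k\}_{k\in\N_0^d}$ is an orthonormal basis of $L^2(\rd)$, the coefficient map $f\mapsto(c_k(f))_k$, $c_k(f)=\langle f,H_k\rangle$, is well defined, and by the Hermite-coefficient characterization of Gelfand-Shilov spaces (\cite{L06,LCPT}) it is a topological isomorphism of $\cS^{M_p}_{M_p}(\rd)$ (resp.\ $\Sigma^{M_p}_{M_p}(\rd)$) onto a weighted sequence space $\lambda_d$ over $\N_0^d$, with inverse $(c_k)_k\mapsto\sum_k c_k H_k$. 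Because the defining weights grow faster than any power of $|k|$ --- this is where $(M.1)$--$(M.2)$ and the nontriviality condition enter --- the space $\lambda_d$ is nuclear: a Fr\'echet--Schwartz space in the Beurling case, a (DFS)-space (inductive limit of Banach spaces with compact links) in the Roumieu case. Consequently $\cS^{M_p}_{M_p}(\rd)$, $\Sigma^{M_p}_{M_p}(\rd)$ and their strong duals are nuclear, of type (FN), resp.\ (DFN).

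\emph{Factoring and the kernel form.} Next, since $H_{(k,l)}(x,y)=H_k(x)H_l(y)$ for $k\in\N_0^{d_1}$, $l\in\N_0^{d_2}$, the coefficient array of $F\in\cS^{M_p}_{M_p}(\R^{d_1+d_2})$ is indexed by $\N_0^{d_1}\times\N_0^{d_2}$ and the weight splits, whence the standard factorization of a doubly-indexed weighted sequence space gives $\lambda_{d_1+d_2}\cong\lambda_{d_1}\hat{\otimes}\lambda_{d_2}$ (the completed tensor product being unambiguous since $\hat{\otimes}_\pi=\hat{\otimes}_\epsilon$ by nuclearity). Transporting back through the Hermite isomorphism yields the first isomorphism in 1.\ (resp.\ 3.). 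For the second I would use the abstract kernel theorem: for a nuclear Fr\'echet space or nuclear (DFS)-space $E$ and a complete space $F$ one has $E\hat{\otimes}F\cong\mathcal{L}_b(E'_b,F)$ (\cite{Trev}), applied with $E=\cS^{M_p}_{M_p}(\R^{d_1})$ (resp.\ its Beurling analogue) and $F=\cS^{M_p}_{M_p}(\R^{d_2})$. Statements 2.\ and 4.\ then follow by duality: strong duals of nuclear (FN)-spaces are nuclear (DFN)-spaces and conversely, $(E\hat{\otimes}F)'_b\cong E'_b\hat{\otimes}F'_b$ and $\mathcal{L}_b(E,F'_b)\cong(E\hat{\otimes}F)'_b$ for nuclear $E,F$, so applying 1.\ (resp.\ 3.) to the dual spaces gives the remaining isomorphisms; cf.\ \cite{Teof2015}.

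\emph{Main obstacle.} The hard part is the Hermite isomorphism: proving that the coefficient map is a topological isomorphism onto a \emph{correctly identified} weighted sequence space, \emph{uniformly} across the whole projective (resp.\ inductive) scale defining the space, and verifying that this sequence space is Fr\'echet--Schwartz (resp.\ (DFS)) and nuclear, so that the purely functional-analytic steps apply. This is precisely where the assumptions are used: $(M.2)$, via $(M.2)'$, controls $M_{p+1}/M_p$, which governs the regularity of the weight in $|k|$ through the harmonic-oscillator eigenvalue relation $HH_k=(2|k|+d)H_k$; $\{N.1\}$ (resp.\ $(N.1)$) supplies nontriviality together with the decay that forces the Grothendieck--Pietsch nuclearity criterion.
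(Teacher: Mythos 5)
Your argument is correct in outline, but it follows a genuinely different route from the paper for the central isomorphism $\cS^{M_p}_{M_p}(\R^{d_1+d_2})\cong \cS^{M_p}_{M_p}(\R^{d_1})\hat{\otimes}\cS^{M_p}_{M_p}(\R^{d_2})$. The paper uses the Hermite functions only to obtain \emph{density} of the algebraic tensor product $\cS^{M_p}_{M_p}(\R^{d_1})\otimes\cS^{M_p}_{M_p}(\R^{d_2})$ in $\cS^{M_p}_{M_p}(\R^{d_1+d_2})$ (via Lemma \ref{ekvivHerm} and the representation theorems of \cite{L06,LCP}, which is what replaces the non-quasianalytic density arguments of \cite{Prang}), and then does the real work by hand: it shows that the topology induced by $\cS^{M_p}_{M_p}(\R^{d_1+d_2})$ on the tensor product is squeezed between the $\pi$ and $\epsilon$ topologies (a separately continuous bilinear map for the $\pi$ side, estimates of $|\langle F_x\otimes\tilde F_y,\Phi(x,y)\rangle|$ over equicontinuous sets for the $\epsilon$ side), nuclearity giving $\pi=\epsilon$; the passage to $\mathcal{L}_b$ is then the same appeal to \cite[Proposition 50.5]{Trev} that you make. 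You instead push the whole problem through the Hermite coefficient map onto K\"othe echelon (resp.\ co-echelon) spaces and factor the doubly indexed sequence space. That is a legitimate and arguably cleaner structural argument, but be aware that its technical core is not only the topological Hermite isomorphism (which \cite{L06} does supply under $(M.1)$, $(M.2)'$ and $\{N.1\}$/$(N.1)$) but also the step you pass over quickly: the claim that ``the weight splits''. The K\"othe matrix of $\cS^{M_p}_{M_p}(\R^{d_1+d_2})$ at index $(k,l)$ involves the associated function evaluated at $(|k|+|l|)^{1/2}$, and its equivalence, uniformly over the projective (Beurling) or inductive (Roumieu) scale, with the product of the matrices in $k$ and in $l$ is exactly where $(M.2)$ enters (through $2M(t)\leq M(Ht)+\log A$ for the associated function); this estimate plays the same role in your proof as the $\pi$/$\epsilon$ comparison does in the paper's, so it should be carried out explicitly rather than asserted. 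Your treatment of statements 2.\ and 4.\ by duality of nuclear (F)- and (DF)-spaces is consistent with the paper, which leaves those claims to the reader.
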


\begin{proof}
By \cite[Remark 3.3]{L06} it follows that  $\{N.1\} $ is equivalent to
$$
H_k (x) \in {\mathcal S}^{M_p} _{M_p} (\R^{d_1}),  \;\; x \in\bR^{d_1},  k \in\bN^{d_1} _0,
\;\; \text{and} \;\;
H_l (y) \in {\mathcal S}^{M_p} _{M_p} (\R^{d_2}),  \;\; y \in\bR^{d_2},  l \in\bN^{d_2} _0,
$$
where $H_k (x) $ and $ H_l (y) $ are the Hermite functions given by \eqref{herm1}.
Now, by representation theorems  from \cite{L06} and \cite{LCP} and the fact that
$ H_{(k,l)} (x,y) \in {\mathcal S}^{M_p} _{M_p} (\R^{d_1+d_2}) $,  $(x,y) \in\bR^{d_1 + d_2}, $ $ (k,l)\in \bN^{d_1+d_2} _0,$
it follows that
$ {\mathcal S}^{M_p} _{M_p} (\R^{d_1}) {\otimes} {\mathcal S}^{M_p} _{M_p} (\R^{d_2}) $ is dense in
${\mathcal S}^{M_p} _{M_p} (\R^{d_1+d_2})$.

In order to obtain the isomorphism $ {\mathcal S}^{M_p} _{M_p} (\R^{d_1+d_2}) \cong
{\mathcal S}^{M_p} _{M_p} (\R^{d_1}) \hat{\otimes} {\mathcal S}^{M_p} _{M_p} (\R^{d_2}) $
it is sufficient to
prove that $ {\mathcal S}^{M_p} _{M_p} (\R^{d_1+d_2})$ induces
the $\pi = \epsilon$  topology on the product space
$ {\mathcal S}^{M_p} _{M_p} (\R^{d_1}) \hat{\otimes} {\mathcal S}^{M_p} _{M_p} (\R^{d_2})$.
The topologies $ {\mathcal S}^{M_p} _{M_p} (\R^{d_1}) {\otimes}_\pi {\mathcal S}^{M_p} _{M_p} (\R^{d_2})$
and $ {\mathcal S}^{M_p} _{M_p} (\R^{d_1}) {\otimes}_\epsilon  {\mathcal S}^{M_p} _{M_p} (\R^{d_2})$
coincide since $ {\mathcal S}^{M_p} _{M_p} (\R^{d}) $ is a nuclear space (see e.g. \cite{LCP}).
We  refer to \cite[Chapter 43]{Trev} for the definition and basic facts on the $\pi $ and $\epsilon$
topologies.

The idea of the proof is to show that the topology $\pi$ on $ {\mathcal S}^{M_p} _{M_p} (\R^{d_1}) {\otimes}  {\mathcal S}^{M_p} _{M_p} (\R^{d_2})$ is stronger than the  one induced from $ {\mathcal S}^{M_p} _{M_p} (\R^{d_1+d_2})$,
and that the $\epsilon$ topology on $ {\mathcal S}^{M_p} _{M_p} (\R^{d_1}) {\otimes}  {\mathcal S}^{M_p} _{M_p} (\R^{d_2})$  is weaker than the induced one.
This will imply  the isomorphism
$$
{\mathcal S}^{M_p} _{M_p} (\R^{d_1+d_2}) \cong
{\mathcal S}^{M_p} _{M_p} (\R^{d_1}) \hat{\otimes} {\mathcal S}^{M_p} _{M_p} (\R^{d_2}).
$$

That proof is quite technical, and we give here only a sketch, cf.  \cite{Teof2015} for details.
For the $\pi$ topology, we use a convenient separately continuous bilinear mapping
which implies the continuity of the inclusion
$ {\mathcal S}^{M_p} _{M_p} (\R^{d_1}) {\otimes}_\pi {\mathcal S}^{M_p} _{M_p} (\R^{d_2}) \rightarrow
{\mathcal S}^{M_p} _{M_p} (\R^{d_1+d_2})$.

For the $\epsilon$ topology, the proof is more technically involved. Namely, for a given equicontinuous subsets $ A' \subset {\mathcal S}^{M_p} _{M_p} (\R^{d_1}) $ and
$ B' \subset {\mathcal S}^{M_p} _{M_p} (\R^{d_2}) $, we use a particularly chosen family of norms which defines a topology
equivalent to the one given by Definition \ref{GSoftypeS} to estimate
$ | \langle F_x \otimes \tilde F_y, \Phi (x,y) \rangle | $, $ F_x \in A' $ and $
\tilde F_y \in B'$.

Next, we use the fact that
$({\mathcal S}^{M_p} _{M_p})' (\R^{d_1})$  and $ {\mathcal S}^{M_p} _{M_p} (\R^{d_2})$ are complete
and that $({\mathcal S}^{M_p} _{M_p})' (\R^{d_1})$ is barreled. Moreover,
$ {\mathcal S}^{M_p} _{M_p} (\R^{d_1})$ is nuclear and complete, so that \cite[Proposition 50.5]{Trev} implies that
$ \mathcal{L}_b ( ({\mathcal S}^{M_p} _{M_p})' (\R^{d_1}), {\mathcal S}^{M_p} _{M_p} (\R^{d_2}))$ is
complete and that
$$ \displaystyle
{\mathcal S}^{M_p} _{M_p} (\R^{d_1}) \hat{\otimes} {\mathcal S}^{M_p} _{M_p} (\R^{d_2}) \cong
\mathcal{L}_b ( ({\mathcal S}^{M_p} _{M_p})' (\R^{d_1}), {\mathcal S}^{M_p} _{M_p} (\R^{d_2})).
$$
This proves 1) and we leave the other claims to the reader, see also \cite{Prang}.
\qed
\end{proof}

The isomorphisms in Theorem \ref{kernelteorema} 2)  tells us that for a given kernel-distribution
$ k(x,y) $ on $\R^{d_1+d_2}$ we may associate a continuous linear mapping
$k$ of $  {\mathcal S}^{M_p} _{M_p} (\R^{d_2})$ into
$ ({\mathcal S}^{M_p} _{M_p})' (\R^{d_1})$ as follows:
$$
\langle k_\varphi, \phi \rangle = \langle k(x,y), \phi(x) \varphi(y) \rangle, \;\;\; \phi \in
 {\mathcal S}^{M_p} _{M_p} (\R^{d_1}),
$$
which is commonly written as $k_\varphi (\cdot) = \int k(\cdot, y )\varphi(y) dy.$ By Theorem \ref{kernelteorema} b)
it follows that the correspondence between $k(x,y) $ and $k$ is  an isomorphism.
Note also that the transpose $^t k$ of the mapping $k$ is given by
$ \displaystyle ^t k_\phi (\cdot) = \int k(x,\cdot )\phi(x) dx.$

By the above isomorphisms we conclude that for any  continuous and linear mapping
between  $ {\mathcal S}^{M_p} _{M_p } (\mathbb{R}^{2d}) $ to
$ ({\mathcal S}^{M_p} _{M_p })'  (\mathbb{R}^{2d})$ one can assign a uniquely determined kernel
with the above mentioned properties.
We will use this fact in the proof of Theorem \ref{Weyl connection lemma}, and
refer to \cite[Chapter 52]{Trev} for applications of kernel theorems in linear partial differential equations.

\begin{remark}
The choice of the Fourier transform invariant spaces of the form $  {\mathcal S}^{M_p} _{M_p} (\R^{d})$
in Theorem \ref{kernelteorema} is not accidental. We refer to \cite{GLPR} where it is proved that
if the Hermite expansion $ \sum_{k \in \bN^{d}} a_k H_k (x)$   converges to $f$
($a_k$ are the Hermite coefficients of $f$)
in the sense of
$ {\mathcal S}^{s} _{r} (\R^{d})$ ($ {\Sigma}^{s} _{r} (\R^{d})$, respectively), $ r<s,$
then it belongs to $ {\mathcal S}^{r} _{r} (\R^{d})$ ($ {\Sigma}^{r} _{r} (\R^{d})$, respectively).
\end{remark}

\subsection{Time-frequency analysis of Gelfand-Shilov spaces} \label{Sec3}

In this section we extend the action of the time-frequency representations from Section \ref{Grossmann-Royer}
to the Gelfand-Shilov spaces and their dual spaces.
To that end we observe the following modification of Definition \ref{GSoftypeS}.


\begin{definition} \label{GSoftypeS-2d}
Let there be given sequences of positive numbers
$ (M_p)_{p \in \mN_0} $, $ (N_q)_{q \in \mN_0} $,
$ (\tilde M_p)_{p \in \mN_0} $, $ (\tilde N_q)_{q \in \mN_0} $
which satisfy $ (M.1) $ and $ (M.2).$ We define
$ \cS^{N_{q} , \tilde N_{q} ,B} _{M_{p}, \tilde M_{p}, A} (\R^{2d}) $ to be the set of smooth functions
$ f \in C^\infty(\R^{2d}) $ such that
$$
\| x^{\alpha_1} \o^{\alpha_2}\partial^{\beta_1} _x \partial^{\beta_2} _\o f  \|_{L^\infty} \leq
C A^{|\alpha_1 + \alpha_2|} M_{|\alpha_1|} \tilde M_{|\alpha_2|}
B^{|\beta_1 + \beta_2|} N_{|\beta_1|} \tilde N_{|\beta_2|},
$$
$$
\forall \alpha_1, \alpha_2, \beta_1, \beta_2 \in \N_{0} ^d \},
$$
and for some $A, B, C >0.$ {\em Gelfand-Shilov spaces} are projective and inductive limits of
$ \cS^{N_{q} , \tilde N_{q} ,B} _{M_{p}, \tilde M_{p}, A} (\R^{2d}) $:
$$
\Sigma^{N_q, \tilde N_q} _{M_p, \tilde M_p} (\R^{2d}) : = {\rm proj} \lim_{A>0, B>0}
\cS^{N_{q} , \tilde N_{q} ,B} _{M_{p}, \tilde M_{p}, A} (\R^{2d});
$$
$$
\cS^{N_q, \tilde N_q} _{M_p, \tilde M_p} (\R^{2d})  : = {\rm ind} \lim_{A>0, B>0}
\cS^{N_{q} , \tilde N_{q} ,B} _{M_{p}, \tilde M_{p}, A} (\R^{2d}).
$$
\end{definition}

Clearly, the corresponding dual spaces are given by
$$
(\Sigma^{N_q, \tilde N_q} _{M_p, \tilde M_p})' (\R^{2d}) : = {\rm ind} \lim_{A>0, B>0}
(\cS^{N_{q} , \tilde N_{q} ,B} _{M_{p}, \tilde M_{p}, A})' (\R^{2d});
$$
$$
(\cS^{N_q, \tilde N_q} _{M_p, \tilde M_p})' (\R^{2d})  : = {\rm proj} \lim_{A>0, B>0}
(\cS^{N_{q} , \tilde N_{q} ,B} _{M_{p}, \tilde M_{p}, A})' (\R^{2d}).
$$

By Theorem \ref{GS-characterization},  the Fourier transform is a homeomorphism from
$ \Sigma^{N_q, \tilde N_q} _{M_p, \tilde M_p} (\R^{2d}) $
to $\Sigma_{N_q, \tilde N_q} ^{M_p, \tilde M_p} (\R^{2d}) $
and, if $\Fur_1 f$ denotes the partial Fourier transform of $f(x,\o)$
with respect to the $x$ variable, and if $\Fur_2 f$
denotes the partial Fourier transform of $f(x,\o)$
with respect to the $\o$ variable, then
$ \Fur_1 $ and $ \Fur_2$ are homeomorphisms from
$ \Sigma^{N_q, \tilde N_q} _{M_p, \tilde M_p} (\R^{2d}) $
to $\Sigma_{M_p, \tilde N_q} ^{N_q, \tilde M_p} (\R^{2d}) $ and
$ \Sigma^{N_q, \tilde M_p} _{M_p, \tilde N_q} (\R^{2d}) $, respectively.
Similar facts hold when $ \Sigma^{N_q, \tilde N_q} _{M_p, \tilde M_p} (\R^{2d}) $ is replaced by
$\cS^{N_q, \tilde N_q} _{M_p, \tilde M_p} (\R^{2d})$, $
(\Sigma^{N_q, \tilde N_q} _{M_p, \tilde M_p})' (\R^{2d})$ or
$ (\cS^{N_q, \tilde N_q} _{M_p, \tilde M_p})' (\R^{2d})$.

When $M_p = \tilde M_p $ and $ N_q = \tilde N_q $ we use usual abbreviated notation:
$\cS^{N_q} _{M_p}  (\R^{2d}) = \cS^{N_q, \tilde N_q} _{M_p, \tilde M_p} (\R^{2d})$
and similarly for other spaces.

\par

Let $ (M_p)_{p \in \mN_0} $ satisfy $ (M.1) $, $ (M.2)$
and $ \{ N.1\} $ ($ (N.1) $, respectively).
For any given  $f, g \in \cS ^{M_p}_{M_p} (\rd )$
($f, g \in \Sigma ^{M_p}_{M_p} (\rd )$, respectively)  the
Grossmann-Royer transform of $f$ and $g$ is given by
\eqref{GRT}, i.e.
$$
R_g f (x,\omega) = \int e^{4\pi i  \omega(t-x)} f(2x- t) \overline{ g(t)} dt, \;\;\; x,\omega \in \mathbb{R}^d,
$$
and the definition can be extended to  $f \in (\cS ^{M_p}_{M_p})' (\rd ) $
($f \in (\Sigma ^{M_p}_{M_p})' (\rd )$, respectively) by duality.

Similarly, another \tf\, representations, the \stft\ $  V_g $,
the cross-Wigner distribution $W(f,g)$, and the cross-ambiguity function $ A (f,g)$
given by \eqref{GT}, \eqref{WD} and \eqref{FWT},
can be extended to   $f \in (\cS ^{M_p}_{M_p})' (\rd ) $
($f \in (\Sigma ^{M_p}_{M_p})' (\rd )$, respectively) when $g \in \cS ^{M_p}_{M_p} (\rd )$
($g \in \Sigma ^{M_p}_{M_p} (\rd )$, respectively).

The following theorem and its variations is a folklore,
in particular in the framework of the duality between $\cS (\R^{2d})$ and $\cS^{'} (\R^{2d})$.
For Gelfand-Shilov spaces we refer to e.g.  \cite{GZ, T2, Teof2015, Toft-2012}.

\begin{theorem} \label{nec-suf-cond}
Let there be given  sequences $ ( M_p )_{p \in \mN_0} $ and
$ ( N_q )_{q \in \mN_0} $ which satisfy
(M.1), (M.2) and $ \{ N.1\} $, and let $ TFR (f,g) \in \{ R_g f, V_g f, W (f,g), A\}.$
If $f,g \in \cS^{N_q} _{M_p} (\R^d)$,
then $ TFR (f,g)   \in \cS^{N_q, M_p} _{M_p, N_q} (\rdd) $
and extends uniquely to a continuous map from
$ (\cS^{N_q} _{M_p})' (\R^d)\times  (\cS^{M_p} _{N_q})' (\R^d) $
into $ (\cS^{N_q,M_p} _{M_p, N_q})' (\R^{2d}).$

Conversely, if  $ TFR (f,g) \in \cS^{N_q,M_p} _{M_p,N_q} (\rdd) $
then $f,g \in \cS^{N_q} _{M_p} (\R^d).$

Let the sequences $ ( M_p )_{p \in \mN_0} $ and
$ ( N_q )_{q \in \mN_0} $ satisfy
(M.1), (M.2) and $ ( N.1) $ instead.
If $f,g \in \Sigma^{N_q} _{M_p} (\R^d), $
then $ TFR(f,g)   \in \Sigma^{N_q, M_p} _{M_p, N_q}  (\rdd) $
and extends  uniquely to a continuous  map from $ (\Sigma^{N_q} _{M_p})' (\R^d) \times  (\Sigma^{M_p} _{N_q})' (\R^d) $
into $ (\Sigma^{N_q,M_p} _{M_p, N_q})' (\R^{2d}).$

Conversely, if  $ TFR (f,g) \in \Sigma^{N_q,M_p} _{M_p, N_q} (\rdd)$
then $f,g \in \Sigma^{N_q} _{M_p} (\R^d).$
\end{theorem}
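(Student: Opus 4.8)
\emph{Reduction to the Grossmann--Royer transform.} By Lemma~\ref{GRandRelatives} the four representations $R_gf$, $V_gf$, $W(f,g)$, $A(f,g)$ differ from one another only by the substitution $(x,\omega)\mapsto(x/2,\omega/2)$, by multiplication with a phase of the form $e^{\pm\pi i x\omega}$, by the constant $2^d$, and by replacing $g$ with $\check g$. The dilation is a linear isomorphism of $\rdd$; multiplication by $e^{\pm\pi i x\omega}$ preserves $\cS^{N_q,M_p}_{M_p,N_q}(\rdd)$ and its Beurling analogue, since the derivatives of that phase grow only polynomially while the elements of the space decay faster than any exponential; and $g\mapsto\check g$ is an isomorphism of $\cS^{N_q}_{M_p}(\R^d)$ (resp. $\Sigma^{N_q}_{M_p}(\R^d)$) and of its dual. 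Hence it suffices to treat $TFR(f,g)=R_gf$, and I only discuss the Roumieu case, the Beurling one being identical with the usual modification of quantifiers.

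\emph{Direct inclusion.} Write $R_gf$ as in \eqref{GRrewritten}, i.e. $R_gf=\Fur_2\circ\tau^*(f\otimes\overline g)$ with $\tau(x,s)=(2x-s,s)$. If $f,g\in\cS^{N_q}_{M_p}(\R^d)$, then the estimates of Definition~\ref{GSoftypeS} for $f$ and $\overline g$ multiply out to give $f\otimes\overline g\in\cS^{N_q}_{M_p}(\rdd)$, the isotropic two-variable space of Definition~\ref{GSoftypeS-2d}. Next, $\tau^*$ is the pullback by an invertible linear map, and such maps preserve $\cS^{N_q}_{M_p}(\rdd)$: the Leibniz rule together with the expansion of $(2x-s)$-monomials produces finitely many terms whose sizes are controlled with the help of $(M.1)$ and $(M.2)$, which are exactly the inequalities that let one pass from $M_{|\alpha|}M_{|\beta|}$ to $M_{|\alpha|+|\beta|}$ at the cost of a geometric factor. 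Finally, by the mapping properties of the partial Fourier transform recorded after Definition~\ref{GSoftypeS-2d} --- themselves a consequence of Theorem~\ref{GS-characterization} applied in the second block of variables --- $\Fur_2$ carries $\cS^{N_q}_{M_p}(\rdd)=\cS^{N_q,N_q}_{M_p,M_p}(\rdd)$ continuously into $\cS^{N_q,M_p}_{M_p,N_q}(\rdd)$. Composing the three continuous maps yields $R_gf\in\cS^{N_q,M_p}_{M_p,N_q}(\rdd)$ with estimates depending continuously on $f,g$.

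\emph{Extension to the dual spaces.} Transpose the chain above. For $\Phi\in\cS^{N_q,M_p}_{M_p,N_q}(\rdd)$ the assignment $\Phi\mapsto({}^{t}\tau^*)({}^{t}\Fur_2)\Phi$ is, up to the obvious adjoints and Jacobian, inverse to the composition used in the direct part, hence continuous from $\cS^{N_q,M_p}_{M_p,N_q}(\rdd)$ onto $\cS^{N_q}_{M_p}(\R^d)\hat{\otimes}\cS^{M_p}_{N_q}(\R^d)$; this window bookkeeping is precisely what forces $g$ to be taken in $(\cS^{M_p}_{N_q})'(\R^d)$. For $f\in(\cS^{N_q}_{M_p})'(\R^d)$ and $g\in(\cS^{M_p}_{N_q})'(\R^d)$ one then \emph{defines}
$$
\langle R_gf,\Phi\rangle:=\big\langle f\otimes\overline g,\ ({}^{t}\tau^*)({}^{t}\Fur_2)\Phi\big\rangle,
$$
the right-hand side being the duality between $\cS^{N_q}_{M_p}(\R^d)\hat{\otimes}\cS^{M_p}_{N_q}(\R^d)$ (a nuclear space) and its dual, which contains $f\otimes\overline g$. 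Boundedness of this functional is immediate; agreement with \eqref{GRT} on test functions is a direct computation; and uniqueness follows from weak-$*$ density of test functions in the dual spaces, which in turn rests on the nontriviality hypothesis $\{N.1\}$ via Lemma~\ref{ekvivHerm}. The statements for $\cS'$, $\Sigma$, $\Sigma'$ follow in the same way.

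\emph{Converse, and the main obstacle.} Suppose $R_gf\in\cS^{N_q,M_p}_{M_p,N_q}(\rdd)$. Fix a nonzero $\gamma\in\cS^{N_q}_{M_p}(\R^d)$, which exists because $\{N.1\}$ places the Hermite functions in the space (Lemma~\ref{ekvivHerm}). From the inversion formula recorded after Proposition~\ref{svojstva},
$$
f=\frac{1}{\|\gamma\|^{2}}\iint_{\rdd} R_\gamma f(x,\omega)\,R\gamma(x,\omega)\,dx\,d\omega,
$$
where, by \eqref{GRT-trans-mod}, $R\gamma(x,\omega)$ is a reflected, translated and modulated copy of $\gamma$ times a unimodular constant. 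Applying $x^{p}\partial^{q}_{x}$ to $f$ moves monomials and derivatives onto $R\gamma(x,\omega)$ at the cost of factors polynomial in $(x,\omega)$ and of translated--modulated copies of $\partial^{\le q}\gamma$, all bounded uniformly in $(x,\omega)$ by the Gelfand--Shilov estimates for $\gamma$; the polynomial loss is absorbed by the super-decay of $R_\gamma f$ in $(x,\omega)$. Recombining the sequence factors via $(M.1)$, $(M.2)$ gives $\|x^{p}\partial^{q}f\|_{L^\infty}\le CA^{p}M_{|p|}B^{q}N_{|q|}$, i.e. $f\in\cS^{N_q}_{M_p}(\R^d)$; by $R_gf=\overline{R_fg}$ (Proposition~\ref{svojstva}) the same argument with $f$ and $g$ interchanged gives $g\in\cS^{N_q}_{M_p}(\R^d)$. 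This converse is the only step needing real work rather than bookkeeping on top of Theorem~\ref{GS-characterization}: one must justify convergence of the inversion integral in the topology of $\cS^{N_q}_{M_p}(\R^d)$ and produce the two-sided estimates uniformly in $(p,q)$, which is exactly where the defining conditions $(M.1)$--$(M.2)$ and the nontriviality condition $\{N.1\}$ (resp. $(N.1)$) are genuinely used.
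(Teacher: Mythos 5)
Your forward direction and the extension to duals follow the paper's own route: reduce to $R_g f$ via Lemma \ref{GRandRelatives}, factor $R_g f=\Fur_2\circ\tau^*(f\otimes\overline g)$ as in \eqref{GRrewritten}, use that $\tau^*$ preserves $\cS^{N_q,N_q}_{M_p,M_p}(\rdd)$ and that $\Fur_2$ maps it onto $\cS^{N_q,M_p}_{M_p,N_q}(\rdd)$, and transpose for the dual pairs. That part is fine (one small slip: elements of $\cS^{N_q}_{M_p}$ do not in general "decay faster than any exponential" --- for $M_p=p!^{r}$, $r>1$, the decay is only $e^{-c|x|^{1/r}}$ --- so absorbing the chirp $e^{\pm\pi i x\omega}$ really rests on the associated-function estimates of Theorem \ref{GS-characterization} together with (M.1)--(M.2); the paper is equally terse here, so this is minor).

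The converse, however, contains a genuine gap. You fix an auxiliary window $\gamma\in\cS^{N_q}_{M_p}(\R^d)$, write $f=\|\gamma\|^{-2}\iint_{\rdd} R_\gamma f\, R\gamma\,dx\,d\omega$, and absorb all polynomial losses "by the super-decay of $R_\gamma f$ in $(x,\omega)$". But the hypothesis controls $R_g f$, not $R_\gamma f$: a priori $f$ is only an element of the dual space, so $R_\gamma f$ has no decay you may use, and your decisive estimate never invokes the assumption $R_g f\in\cS^{N_q,M_p}_{M_p,N_q}(\rdd)$ at all. Taking $\gamma=g$ does not repair this, since the Gelfand--Shilov bounds on $Rg$ are precisely what is to be proved. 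Two ways to fix it: (i) the paper's argument, where the converse is essentially free --- because $\tau^*$ and $\Fur_2$ are continuous bijections, $R_gf\in\cS^{N_q,M_p}_{M_p,N_q}(\rdd)$ is equivalent to $f\otimes\overline g\in\cS^{N_q,N_q}_{M_p,M_p}(\rdd)$, and evaluating the tensor on a hyperplane through a point where the other factor does not vanish gives $f,g\in\cS^{N_q}_{M_p}(\R^d)$ (so this is not "the only step needing real work"; in the paper it is bookkeeping); or (ii) keep your inversion-formula strategy but use $g$ as the analysis window and $\gamma$ as the synthesis window, i.e. $f=\langle\gamma,g\rangle^{-1}\iint_{\rdd} R_g f\, R\gamma\,dx\,d\omega$ with $\gamma$ chosen so that $\langle\gamma,g\rangle\neq0$; then the hypothesis supplies the decay and derivative bounds for $R_g f$, the chosen $\gamma$ supplies those for $R\gamma$, and your estimate of $x^{p}\partial^{q}f$ can be carried out, with the symmetry $R_gf=\overline{R_fg}$ handling $g$ as you indicate.
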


\begin{proof}
Since Gelfand-Shilov spaces
are closed under reflections, dilations and modulations, by Lemma \ref{GRandRelatives}
it is enough to give the proof for the Grossmann-Royer transform, and the same conclusion holds for other \tf\, representations.
But the proof is essentially the same as the proof of Proposition \ref{svojstva} {\em 6.}
We recall \eqref{GRrewritten}:
$$
R_g f (x,\omega) = (\mathcal{F}_2 \circ \tau^*) f \otimes \overline{ g} (x,\omega), \;\;\;x,\omega \in \mathbb{R}^d,
$$
Since the pullback operator $ \tau^*$ is a continuous bijection on
$ \cS^{N_q, N_q} _{M_p, M_p} (\R^{2d})$, and
$ \mathcal{F}_2 $ is a continuous bijection between $ \cS^{N_{q}, N_q} _{M_{p}, M_p} (\R^{2d})$
and  $ \cS^{N_{q}, M_p} _{M_{p}, N_q} (\R^{2d})$,
we obtain
$$
R_g f \in  \cS^{N_{q}, M_p} _{M_p,N_q}  (\rdd) \Leftrightarrow
f \otimes \overline{g}  \in  \cS^{N_{q}, N_q} _{M_p, M_p}  (\rdd) \Leftrightarrow f,g \in
\cS^{N_q} _{M_p} (\R^d).
$$
Moreover, $ R_g f$ can be extended to a map from
$ (\cS^{N_q} _{M_p})' (\R^d)\times  (\cS^{M_p} _{N_q})' (\R^d) $
into $ (\cS^{N_q, M_p} _{M_p, N_q})' (\R^{2d})$ by duality.

To prove that
$ R_g f   \in \cS^{N_q, M_p} _{M_p, N_q} (\rdd) $ when
$f,g \in \cS^{N_q} _{M_p} (\R^d)$, we could also perform direct calculations based on the  following observations.

Asume that
$g \in \cS^{\tilde N_{q}} _{\tilde M_{p}} (\R^d) $
where $ (\tilde M_p )_{p \in \mN_0} $ and
$ (\tilde N_q )_{q \in \mN_0} $ satisfy (M.1), (M.2),
$\tilde M_p \subset M_p$ and $ \tilde N_q \subset N_q$, which is  a slightly more general situation.
Than $ f(x)\otimes g(t) \in  \cS^{N_{q}, \tilde N_q} _{M_{p}, \tilde M_p} (\R^d \times \R^d ).$

Put $ \varphi (x,t) := f(2x -t) g (t)$.
If we show
\begin{equation} \label{varphi po x}
\sup_{x,t \in \mathbb{R}^d}  | x^{\alpha}  t^\beta  \varphi (x,t) | \leq C h^{|\alpha| + |\beta|}
M_{|\alpha|} M_{|\beta|},
\end{equation}
and
\begin{equation} \label{varphi po ksi}
\sup_{x,t \in \mathbb{R}^d}  |  \partial^\alpha _x \partial^\beta _t  \varphi (x,t) |
\leq C k^{|\alpha|+ |\beta|} N_{|\alpha|}  N_{|\beta|}
\end{equation}
for some  $ h,k > 0,$ then by Theorem \ref{GS-characterization} it follows that
$ \varphi \in \cS^{N_{q}, N_q} _{M_{p}, M_p} (\R^{2d} ).$

The first inequality easily follows from assumptions on $f$ and $g$ and a change of variables:
$$
\sup_{x,t \in \mathbb{R}^d}  | x^{\alpha}  t^\beta  f(2x-t) g (t) |
\leq  2^{-|\alpha|} \sup_{y,t \in \mathbb{R}^d}  | (y+ t)^{\alpha}  t^\beta  f(y) g(t)|,
$$
and \eqref{varphi po x} follows from the assumptions on $f$, $g$ and $\tilde M_p \subset M_p$.
To prove \eqref{varphi po ksi}, we use the Leibniz formula which gives
\begin{eqnarray*}
 |\partial^\alpha _x \partial^\beta _t  \varphi (x,t)|
& =&
 | \sum_{\gamma \leq \beta}
  {\beta \choose \gamma}
 \frac{1}{2^{|\alpha| + |\beta|} }
 \partial^{\alpha} _x \partial^{\gamma} _t  f (2x - t)
 \partial^{\beta - \gamma} _t g  (t)|
\\
& \leq & C_{\alpha, \beta} \sup_{x,t \in \mathbb{R}^d}
| \partial^{\alpha} _x \partial^{\gamma} _t  f (2x - t)
\partial^{\beta - \gamma} _t g  (t)|.
\end{eqnarray*}
Next we use $ \tilde N_q \subset N_q$ and conditions (M.1) and (M.2) applied to the sequence $(N_q)$
to obtain \eqref{varphi po ksi}.
Therefore, $ \varphi \in \cS^{N_{q}, N_q} _{M_{p}, M_p} (\R^{2d} ).$

Now, the partial inverse Fourier transform of $\varphi$ with respect to the second variable
is continuous bijection between $ \cS^{N_{q}, N_q} _{M_{p}, M_p} (\R^{2d})$
and  $ \cS^{N_{q}, M_p} _{M_{p}, N_q} (\R^{2d})$, and those spaces are closed under  dilations, so that
$$
R_g f (x,\o) = e^{-4\pi i \o x}  \int e^{2\pi i \o (2t) } \varphi (x,t) dt \in
\cS^{N_{q}, M_p} _{M_p N_q} (\R^{2d})
$$
if and only if $ \varphi \in \cS^{N_{q}, N_q} _{M_{p}, M_p} (\R^{2d}). $
The extension  to a map from
$ (\cS^{N_q} _{M_p})' (\R^d)\times  (\cS^{M_p} _{N_q})' (\R^d) $
into $ (\cS^{N_q, M_p} _{M_p, N_q})' (\R^{2d})$ is straightforward.
\qed
\end{proof}


\section{Modulation Spaces} \label{Modulation Spaces}

The modulation space norms traditionally  measure
the joint time-frequency distribution of $f\in \sch '$,
we refer, for instance, to \cite{F1}, \cite[Ch.~11-13]{Gro} and
the original literature quoted there for various properties and applications.
It is usually sufficient to observe modulation spaces with weights which admit at most polynomial growth at infinity.
However the study of ultra-distributions requires a more general approach that includes the weights of exponential or even superexponential growth, cf. \cite{CPRT1, Toft-2017}.
Note that the general approach introduced already in  \cite{F1}
includes the weights of sub-exponential growth (see (\ref{BDweight})).
We refer to \cite{FG-Atom1, FG-Atom2}
for related but even more general constructions, based on the general theory of coorbit spaces.

\emph{Weight Functions.}  In the sequel $v$ will always be a
continuous, positive,  even, submultiplicative   function
(submultiplicative weight), i.e., $v(0)=1$, $v(z) =
v(-z)$, and $ v(z_1+z_2)\leq v(z_1)v(z_2)$, for all $z,
z_1,z_2\in\Renn.$ Moreover, $v$ is assumed to be even in each group of coordinates,  that is, $ v (x, \o) = v (-\o, x)= v(-x,\o), $ for any $\phas\in\rdd$. Submultipliciativity implies that $v(z)$ is \emph{dominated} by an exponential function, i.e.
\begin{equation} \label{weight}
 \exists\, C, k>0 \quad \mbox{such\, that}\quad  v(z) \leq C e^{k \|z\|},\quad z\in \rdd,
\end{equation}
and   $\|z\|$ is  the Euclidean norm of $z\in \rdd$.
For example, every weight of the form
\begin{equation} \label{BDweight}
v(z) =   e^{s\|z\|^b} (1+\|z\|)^a \log ^r(e+\|z\|)
\end{equation}
 for parameters $a,r,s\geq 0$, $0\leq b \leq 1$ satisfies the
above conditions.

 For our investigation of localization
operators we will mostly use the exponential weights   defined by
\begin{eqnarray}
   w_s (z)&=&w_s\phas = e^{s\|\phas\|},\quad
   z=(x,\o)\in\Renn \,, \label{eqc1}\\
 \tau_s(z)&=& \tau _s \phas = e^{s\|\o\|}  \,\label{eqc2}.
\end{eqnarray}
Notice  that arguing on $\bR ^{4d}$ we may read
\begin{equation} \label{tau}
\tau _s (z,\zeta ) = w_s (\zeta )   \quad \quad z,\zeta \in \Renn \,,
\end{equation}
which will be used in the sequel.

Associated to every submultiplicative weight we consider the class of
so-called  {\it
  v-moderate} weights $\cM _v$. A  positive, even
weight function  $m$ on $\Renn$ belongs to $\cM _v$ if it  satisfies
the condition
$$
 m(z_1+z_2)\leq Cv(z_1)m(z_2)  \quad   \forall z_1,z_2\in\Renn \, .
$$
 We note that this definition implies that
$\frac{1}{v} \lesssim m \lesssim v $,  $m \neq 0$ everywhere, and that
$1/m \in \cM _v$.

Depending on the
growth of the weight function $m$, different Gelfand-Shilov
classes may be chosen as fitting test function spaces for \modsp
s, see \cite{CPRT1,T2,Toft-2017}. The widest class of weights allowing to
define \modsp s is the weight class $\cN$. A weight function  $m$
on $\rdd$ belongs to $\cN$  if it is a continuous, positive
function such that
\begin{equation}\label{s12}
m(z)=o(e^{c z^2}),\,\quad\mbox{for}\,\,|z|\rightarrow\infty,\quad
\forall c>0,
\end{equation}
with $z\in\rdd$. For instance, every function $m(z)=e^{s |z|^b}$,
with $s>0$ and $0\leq b<2$,  is in $\cN$. Thus, the weight $m$ may
grow faster than exponentially at infinity. For example, the choice $ m \in \cN \setminus \cM _v$
is related to the spaces of  quasianalytic functions, \cite{CPRT2}.
We notice that  there
is a limit in enlarging the weight class for \modsp s, imposed by
Hardy's theorem:
if $m(z)\geq C e^{c z^2}$, for some $c>\pi/2$, then the corresponding \modsp s  are trivial \cite{GZ01}.
We refer to \cite{Gro07} for a
survey on the most important types of weights commonly used in time-frequency analysis.

\begin{definition}\label{defmodnorm}
Let $m\in \cN$, and $g$ a non-zero \emph{window} function in $\cS^{1/2}_{1/2}(\rd)$. For
$1\leq p,q\leq \infty$ the {\it  modulation space} $M^{p,q}_m(\Ren)$ consists of all
$f\in (\cS^{1/2}_{1/2})' (\rd)$
such that $V_gf\in L^{p,q}_m(\Renn )$
(weighted mixed-norm spaces). The norm on $M^{p,q}_m$ is
$$
\|f\|_{M^{p,q}_m}=\|V_{g}f\|_{L^{p,q}_m}=\left(\int_{\Ren}
  \left(\int_{\Ren}|V_{g} f(x,\o)|^pm(x,\o)^p\,
    dx\right)^{q/p}d\o\right)^{1/q}
$$
(with obvious changes if either $p=\infty$ or $q=\infty$). If
$p,q< \infty $, the \modsp\ $\Mmpq $ is the norm completion of
$\cS^{1/2}_{1/2}$ in the $\Mmpq $-norm. If $p=\infty $ or
$q=\infty$, then $\Mmpq $ is the completion of $\cS^{1/2}_{1/2}$
in the weak$^*$ topology.
\end{definition}

In this paper we restrict ourselves to   $v$-moderate weights $\cM _v$.
Then, for $f,g \in \cS^{(1)} (\rd )$ $ = \Sigma _1 ^1  (\rd ) $
the above integral is convergent thanks to Theorem \ref{nec-suf-cond}.
Namely, in  view of \eqref{weight}, for a given $ m\in\cM _v$ there exist $l>0$ such that
$ m (x,\o) \leq C e^{l \|\phas\|}$ and therefore
\begin{eqnarray*}
&& \left|\int_{\Ren}
  \left ( \int_{\Ren}|V_gf(x,\o)|^p m(x,\o)^p\,
    dx\right)^{q/p}d\o\right|\\
   && \quad\quad\quad\quad
     \leq
    C \left|\int_{\Ren}
  \left( \int_{\Ren}|V_gf(x,\o)|^p e^{l p\|\phas\|}\,
    dx\right) ^{q/p}  d\o\right| < \infty
\end{eqnarray*}
since by  Theorems \ref{nec-suf-cond} and Theorem \ref{GS-characterization}
we have $ |V_gf(x,\o)| < C e^{-s \|\phas\| } $ for every $s > 0.$
This implies $ \cS ^{(1)} \subset M^{p,q}_m.$

In particular, when $m$ is a polynomial weight of the form $m (x, \omega) = \langle  x \rangle ^t
\langle \omega \rangle ^s$ we will use the notation
$M^{p,q}_{s,t}(\mathbb{R}^d)$ for the modulation spaces which consists of all
$f\in \mathcal{S}'(\mathbb{R}^d)$ such that
$$
\| f \|_{M^{p,q}_{s,t}} \equiv \left  ( \int _{\mathbb{R}^d} \left ( \int _{\mathbb{R}^d}
|V_\phi f(x,\omega )\langle  x \rangle ^t
\langle \omega \rangle ^s|^p\, dx  \right )^{q/p}d\omega  \right )^{1/q}<\infty
$$
(with obvious interpretation of the integrals when $p=\infty$ or $q=\infty$).

If $p=q$, we write $M^p_m$ instead of $M^{p,p}_m$, and if $m(z)\equiv 1$ on $\Renn$, then we write $M^{p,q}$ and $M^p$ for $M^{p,q}_m$ and $M^{p,p}_m$, and so on.

In the next proposition we show that  $\Mmpq (\Ren )$ are  Banach spaces
whose definition is independent of the choice of the window
$g \in M^1_{v} \setminus \{ 0\}$.
In order to do so, we need the adjoint of the short-time Fourier transform.

For given window $ g \in  \mathcal{S} ^{(1)} $ and a
function $ F (x,\xi) \in L^{p,q} _m (\R^{2d})$ we define $ V^* _g F $ by
$$
\langle V^* _g F, f \rangle := \langle F, V_g f \rangle,
$$
whenever the duality is well defined.

In our context, \cite[Proposition 11.3.2]{Gro} can be rewritten as follows.

\begin{proposition} \label{emjedanve}
Fix $m \in  \cM _v$ and $ g, \psi \in  \mathcal{S} ^{(1)},$ with $\langle g, \psi \rangle\not= 0$. Then
\begin{enumerate}
\item $ V^* _g :  L^{p,q} _m (\R^{2d}) \rightarrow  M^{p,q} _m (\R^{d}), $ and
\begin{equation}  \label{vstar}
\|  V^* _g F  \|_{ M^{p,q} _m } \leq C \| V_\psi g \|_{ L^{1} _v }   \| F \|_{L^{p,q} _m}.
\end{equation}
\item The inversion formula holds: $ I_{ M^{p,q} _m }  = \langle g, \psi \rangle^{-1}
  V^* _g   V _\psi,$ where  $ I_{ M^{p,q} _m } $ stands for the identity operator.
\item   $\Mmpq (\Ren )$ are  Banach spaces
whose definition is independent on the choice of
$ g \in \mathcal{S} ^{(1)} \setminus \{ 0 \} $.
\item The space of admissible windows can be extended from $ {\mathcal S} ^{(1)} $ to $M^1 _v .$
\end{enumerate}
\end{proposition}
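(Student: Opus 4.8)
## Proof proposal for Proposition \ref{emjedanve}

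The plan is to follow the classical argument of \cite[Proposition 11.3.2]{Gro}, adapting each step to the present setting where the window lives in $\mathcal{S}^{(1)}(\rd)$ and $m\in\cM_v$ is $v$-moderate with $v$ of at most exponential growth. The backbone is the pointwise estimate relating $V_g$ of a function and $V_\psi$ of that same function through the window cross-correlation, namely the identity
$$
|V_\psi f(x,\o)| \leq \frac{1}{|\langle g,\psi\rangle|}\,\bigl(|V_g f| * |V_\psi g|\bigr)(x,\o),
$$
which is a direct consequence of the reproducing/inversion formula for the STFT; here the convolution is the twisted convolution on $\Renn$, but for the purpose of norm estimates on $L^{p,q}_m$ the ordinary convolution majorant suffices. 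Combined with the submultiplicativity $m(z_1+z_2)\leq C v(z_1)m(z_2)$, Young's inequality for weighted mixed-norm spaces gives
$$
\|V_\psi f\|_{L^{p,q}_m}\leq \frac{C}{|\langle g,\psi\rangle|}\,\|V_\psi g\|_{L^1_v}\,\|V_g f\|_{L^{p,q}_m},
$$
from which the independence of the $M^{p,q}_m$-norm on the window (up to equivalence) follows by symmetry in $g$ and $\psi$. This already proves the core of item 3; that $V_\psi g\in L^1_v$ when $g,\psi\in\mathcal{S}^{(1)}$ is exactly the content of Theorems \ref{nec-suf-cond} and \ref{GS-characterization}, since $V_\psi g$ decays like $e^{-s\|z\|}$ for every $s>0$ while $v$ grows at most like $e^{k\|z\|}$, so the product is integrable.

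For item 1, I would unwind the definition of $V^*_g$: for $F\in L^{p,q}_m$ and $f\in\mathcal{S}^{(1)}$, by Fubini (justified by the decay of $V_g f$ against the weighted growth of $F$ and $m$),
$$
\langle V^*_g F, f\rangle = \langle F, V_g f\rangle = \int_{\Renn} F(z)\,\overline{V_g f(z)}\,dz,
$$
and then estimate using Hölder in the $L^{p,q}_m$/$L^{p',q'}_{1/m}$ duality together with the same majorant trick: testing against $V_\psi f$ via the inversion formula reduces $|\langle V^*_g F,f\rangle|$ to a bound by $C\|V_\psi g\|_{L^1_v}\|F\|_{L^{p,q}_m}\|f\|_{M^{p',q'}_{1/m}}$ (when $p,q<\infty$), which by duality of modulation spaces gives $\|V^*_g F\|_{M^{p,q}_m}\leq C\|V_\psi g\|_{L^1_v}\|F\|_{L^{p,q}_m}$, i.e.\ \eqref{vstar}; the endpoint cases $p=\infty$ or $q=\infty$ are handled by the weak$^*$ formulation as in Definition \ref{defmodnorm}. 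Item 2 is then the reproducing formula: applying $V^*_g$ to $V_\psi f$ and using $\langle V^*_g V_\psi f, h\rangle = \langle V_\psi f, V_g h\rangle = \langle g,\psi\rangle\langle f,h\rangle$ (the orthogonality relation for the STFT, valid on $\mathcal{S}^{(1)}\times\mathcal{S}^{(1)}$ by Proposition \ref{svojstva} and Lemma \ref{GRandRelatives}, then extended by density/duality) yields $\langle g,\psi\rangle^{-1}V^*_g V_\psi = I$ on $M^{p,q}_m$.

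With items 1 and 2 in hand, item 3 follows: completeness of $M^{p,q}_m$ comes from the fact that $V_g$ is an isometry onto a \emph{closed} subspace of $L^{p,q}_m$ — closedness is a consequence of the inversion formula, since a Cauchy sequence in $M^{p,q}_m$ maps to a Cauchy sequence in $L^{p,q}_m$ whose limit is recovered by $V^*_g$ — and window-independence is the two-sided estimate derived above. Finally, for item 4 I would observe that the only properties of the window used throughout are that $V_\psi g\in L^1_v$ (for the switch of windows) and that the orthogonality relation persists; both hold for $g\in M^1_v$, because $M^1_v$ is precisely the class for which $\|V_\psi g\|_{L^1_v}<\infty$ with $\psi\in\mathcal{S}^{(1)}$, and the defining identities extend from $\mathcal{S}^{(1)}$ to $M^1_v$ by density of $\mathcal{S}^{(1)}$ in $M^1_v$ together with the continuity estimates. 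I expect the main obstacle to be the careful justification of the duality and Fubini steps in the non-polynomial weight regime — one must check that $1/m$ still defines an admissible weight (it does, since $1/m\in\cM_v$) and that all the pairings $\langle F, V_g f\rangle$ converge absolutely; once the weighted Young and Hölder inequalities are set up with $v$-moderate weights, the rest is the standard Gröchenig machinery transplanted verbatim.
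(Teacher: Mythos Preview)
Your approach is essentially the same as the paper's, which likewise defers to \cite[Proposition 11.3.2]{Gro} and adapts the standard Gr\"ochenig machinery to the $\mathcal{S}^{(1)}$/exponential-weight setting. The one point the paper singles out that you treat as routine is the density of $\mathcal{S}^{(1)}$ in $M^{p,q}_m$ (needed for item~4), which it explicitly flags as ``not obvious'' and attributes to \cite{Elena07}; you should cite or prove this rather than assume it.
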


\begin{proof} We refer to \cite{CPRT1} for the proof which is based on the proof of
\cite[Proposition 11.3.2.]{Gro}. Note that for  {\em 4.} we need the density of $ \cS ^{(1)} $ in $ M^{p,q}_m.$
This fact is not obvious, we refer to  \cite{Elena07} for the proof.
Then {\em 4.}) follows by using standard arguments of \cite[Theorem 11.3.7]{Gro}.

Note that this result actually implies that Definition \ref{defmodnorm}
coincides with the usual definition of modulation spaces with weights of
polynomial and sub-exponential growth (see, for example \cite{CG02, F1, Gro, PT2}).
\qed
\end{proof}

The following theorem lists some basic properties of modulation spaces.
We refer to \cite{F1, Gro, GZ, PT1, T3, Toft-2012} for the proof.

\begin{theorem} \label{modproerties}
Let $p,q,p_j,q_j\in [1,\infty ]$ and $s,t,s_j,t_j\in \mathbb{R}$, $j=1,2$. Then:
\begin{enumerate}
\item $M^{p,q}_{s,t}(\mathbb{R}^d)$ are Banach spaces, independent of the choice of
$\phi \in \mathcal{S}(\mathbb{R}^d) \setminus 0$;

\item if  $p_1\le p_2$, $q_1\le q_2$, $s_2\le s_1$ and
$t_2\le t_1$, then
$$
\mathcal{S}(\mathbb{R}^d)\subseteq M^{p_1,q_1}_{s_1,t_1}(\mathbb{R}^d)
\subseteq M^{p_2,q_2}_{s_2,t_2}(\mathbb{R}^d)\subseteq
\mathcal{ S}'(\mathbb{R}^d);
$$

\item $ \displaystyle
\cap _{s,t} M^{p,q}_{s,t}(\mathbb{R}^d)=\mathcal{ S}(\mathbb{R}^d),
\quad
\cup _{s,t}M^{p,q}_{s,t}(\mathbb{R}^d)=\mathcal{ S}'(\mathbb{R}^d); $
\item Let $ 1\leq p, q \leq  \infty, $ and let $ w_s $ be given by \eqref{eqc1}.
Then
$$
\Sigma_1 ^1 (\mathbb{R}^d)=  {\mathcal S}^{(1)} (\mathbb{R}^d)=  \bigcap _{s \geq  0} M _{w_{s}} ^{p,q} (\mathbb{R}^d),\;\;\;
(\Sigma_1 ^1)' (\mathbb{R}^d)
= \bigcup _{s \geq 0} M _{1/w_{s}} ^{p,q} (\mathbb{R}^d),
$$
$$
{\mathcal S}_1 ^{1} (\mathbb{R}^d) = {\mathcal S}^{\{1\}} (\mathbb{R}^d) =  \bigcup _{s >  0} M _{w_{s}} ^{p,q} (\mathbb{R}^d),\;\;\;
({\mathcal S}_1 ^{1})' (\mathbb{R}^d)
= \bigcap _{s > 0} M _{1/w_{s}} ^{p,q} (\mathbb{R}^d).
$$
\item For   $p,q\in [1,\infty )$, the dual of $ M^{p,q}_{s,t}(\mathbb{R}^d)$ is
$ M^{p',q'}_{-s,-t}(\mathbb{R}^d),$ where $ \frac{1}{p} +  \frac{1}{p'} $ $ =
 \frac{1}{q} +  \frac{1}{q'} $ $ =1.$
\end{enumerate}
\end{theorem}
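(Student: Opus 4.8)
The plan is to reduce all five assertions to properties of the short-time Fourier transform $V_g$ and of the weighted mixed-norm spaces $L^{p,q}_m(\mathbb R^{2d})$, exploiting the machinery already available: the adjoint $V^*_g$ and the inversion formula of Proposition \ref{emjedanve}, the covariance and Moyal identities of Proposition \ref{svojstva} (transferred to $V_g$ via Lemma \ref{GRandRelatives}), and the Gelfand--Shilov characterizations of Theorems \ref{nec-suf-cond} and \ref{GS-characterization}. The one computational device I would isolate at the outset is the change-of-window inequality $|V_gf(z)|\le|\langle\gamma,\gamma\rangle|^{-1}\big(|V_\gamma f|*|V_g\gamma|\big)(z)$, a consequence of the inversion formula and the covariance property, used together with Young's inequality for a $v$-moderate weight; this is what converts membership phrased with one window into membership with another, and $L^{p,q}$ growth into pointwise growth.

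For part 1 I would observe that $V_\gamma\colon M^{p,q}_m\to L^{p,q}_m$ is bounded (Proposition \ref{emjedanve}(1)) with bounded left inverse $\langle\gamma,g\rangle^{-1}V^*_g$ (inversion formula), so $M^{p,q}_m$ is linearly homeomorphic to a closed subspace of the Banach space $L^{p,q}_m$, hence complete; independence of the window, first within $\mathcal S^{(1)}$ and then extended to $M^1_v$ (and therefore to all $\phi\in\mathcal S\setminus\{0\}$, since $\mathcal S\subset M^1_v$) by Proposition \ref{emjedanve}(3)--(4), comes from the change-of-window inequality. For part 2 I would use that $g\in\mathcal S$ makes $V_gf$ have bounded local suprema (it lies in a Wiener-amalgam space), so the grid values $\sup_{|z-k|\le1}|V_gf(z)|$, $k\in\mathbb Z^{2d}$, form a sequence whose $\ell^{p,q}$ norm is comparable to $\|f\|_{M^{p,q}}$; the nesting $\ell^{p_1,q_1}\subseteq\ell^{p_2,q_2}$ for $p_1\le p_2$, $q_1\le q_2$ and the pointwise bound $\langle x\rangle^{t_2}\langle\omega\rangle^{s_2}\lesssim\langle x\rangle^{t_1}\langle\omega\rangle^{s_1}$ for $s_2\le s_1$, $t_2\le t_1$ then give $M^{p_1,q_1}_{s_1,t_1}\subseteq M^{p_2,q_2}_{s_2,t_2}$. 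The two ends of the scale I would handle directly: $\mathcal S\hookrightarrow M^{1,1}_{s,t}$ for every $s,t$ because $V_\phi\psi\in\mathcal S(\mathbb R^{2d})$ decays faster than any polynomial, and $M^{\infty,\infty}_{s,t}\hookrightarrow\mathcal S'$ because a polynomially bounded $V_\phi f$ reconstructs a tempered distribution through $f=\langle\phi,\phi\rangle^{-1}V^*_\phi V_\phi f$.

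Parts 3 and 4 are then bracketing arguments. One inclusion in each equality follows from part 2 by letting the weights run to $0$ (resp.\ to $-\infty$); for the reverse inclusion in part 3, $f\in\bigcap_{s,t}M^{p,q}_{s,t}$ forces $\langle x\rangle^N\langle\omega\rangle^NV_\phi f\in L^\infty$ for all $N$ (apply the amalgam estimate to the change-of-window inequality), i.e.\ $V_\phi f\in\mathcal S(\mathbb R^{2d})$, whence $f\in\mathcal S$ by the Schwartz-class specialization of Theorem \ref{nec-suf-cond}; dually every $f\in\mathcal S'$ has $V_\phi f$ of polynomial growth, so $f\in M^{\infty,\infty}_{-N,-N}$ for some $N$. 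Part 4 repeats the scheme with the exponential weights $w_s$ of \eqref{eqc1}: for $f,g\in\mathcal S^{(1)}$, Theorem \ref{nec-suf-cond} places $V_gf$ in $\mathcal S^{(1)}(\mathbb R^{2d})$, and Theorem \ref{GS-characterization} converts this to $|V_gf(z)|\lesssim e^{-s\|z\|}$ for every $s>0$, i.e.\ $f\in\bigcap_{s\ge0}M^{p,q}_{w_s}$; conversely, membership in $\bigcap_sM^{p,q}_{w_s}$ upgrades through the change-of-window inequality, using that $|V_\gamma\gamma|$ itself decays exponentially for $\gamma\in\mathcal S^{(1)}$, to $|V_\gamma f(z)|\lesssim e^{-s\|z\|}$ for all $s$, hence $f\in\mathcal S^{(1)}$ by the converse half of Theorem \ref{nec-suf-cond}. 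The Roumieu-type identity for $\mathcal S_1^1=\mathcal S^{\{1\}}$ is identical after replacing ``for every $s>0$'' by ``for some $s>0$'', and the statements about $(\Sigma_1^1)'$ and $(\mathcal S_1^1)'$ follow either by duality (part 5) or directly by repeating the argument with the weights $1/w_s$.

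For part 5 I would fix $\gamma\in\mathcal S^{(1)}$ and combine Moyal's identity with the inversion formula to write $\langle f,h\rangle=\langle\gamma,\gamma\rangle^{-1}\langle V_\gamma f,V_\gamma h\rangle$ whenever both sides make sense; then every $h\in M^{p',q'}_{-s,-t}$ yields a bounded functional on $M^{p,q}_{s,t}$ by Hölder's inequality on $L^{p,q}$ against the weights $\langle x\rangle^{\pm t}\langle\omega\rangle^{\pm s}$, while a functional $\ell\in(M^{p,q}_{s,t})'$ is carried by the isometry $V_\gamma$ onto a functional on the closed subspace $V_\gamma(M^{p,q}_{s,t})$ of $L^{p,q}_{\langle x\rangle^t\langle\omega\rangle^s}$, extended by Hahn--Banach, represented (for $p,q<\infty$) by some $G\in L^{p',q'}_{\langle x\rangle^{-t}\langle\omega\rangle^{-s}}$, and pulled back to $h:=\langle\gamma,\gamma\rangle^{-1}V^*_\gamma G\in M^{p',q'}_{-s,-t}$, which represents $\ell$ with comparable norm. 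The main obstacle I anticipate is precisely the converse halves of parts 3 and 4 --- the passage from ``$mV_\gamma f\in L^{p,q}$ for the whole relevant family of weights $m$'' to a pointwise envelope estimate on $V_\gamma f$ --- since this is where the change-of-window inequality, the (sub)exponential decay of $|V_\gamma\gamma|$ for $\gamma\in\mathcal S^{(1)}$, and Young's inequality for $v$-moderate weights must be combined carefully, and where one must also attend to the weak-$*$ completion defining $M^{p,q}_m$ when $p=\infty$ or $q=\infty$.
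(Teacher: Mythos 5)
The paper itself does not prove Theorem \ref{modproerties}: it records the statement as known and refers to \cite{F1, Gro, GZ, PT1, T3, Toft-2012} for the proofs. Your sketch essentially reconstructs the arguments of those references: the retract $\langle\gamma,g\rangle^{-1}V^*_gV_\gamma$ and the change-of-window inequality (Proposition \ref{emjedanve} plus Young's inequality for $v$-moderate weights) for completeness and window-independence; an amalgam/grid discretization for the monotone embeddings in part 2; bracketing arguments for parts 3 and 4, with the computation the paper itself performs after Definition \ref{defmodnorm} (Theorem \ref{nec-suf-cond} plus Theorem \ref{GS-characterization} giving $|V_gf(z)|\lesssim e^{-s\|z\|}$ for $f,g\in\mathcal S^{(1)}$) supplying one inclusion in part 4; and the Hahn--Banach/mixed-norm duality argument for part 5. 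So the route is the standard one that the paper delegates to the literature, and as an outline it is sound.

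Two steps need repair as written. First, in the converse halves of parts 3 and 4 you pass from a pointwise envelope on $V_\gamma f$ (rapid, resp.\ exponential, decay) to $f\in\mathcal S$, resp.\ $f\in\Sigma_1^1$ or $\mathcal S_1^1$, by invoking the converse half of Theorem \ref{nec-suf-cond}; but the hypothesis of that theorem is membership of the time-frequency representation in a Gelfand--Shilov space on $\mathbb R^{2d}$, which encodes derivative and Fourier-side bounds as well, not mere pointwise decay (the same issue occurs when you assert that rapid decay alone gives $V_\phi f\in\mathcal S(\mathbb R^{2d})$). The missing glue is the STFT-decay characterization of $\mathcal S$, $\Sigma_1^1$, $\mathcal S_1^1$ and their duals, which is exactly the content of \cite{GZ} and \cite{Toft-2012}; alternatively, staying inside the paper's toolkit, feed the envelope into the inversion formula of Proposition \ref{emjedanve} to get $|f(t)|\lesssim e^{-s'|t|}$ (convolving with the rapidly/exponentially decaying $|\gamma|$), obtain the same decay for $\hat f$ from Proposition \ref{svojstva}, item 4 (transferred to the STFT via Lemma \ref{GRandRelatives}), and conclude by condition 5 of Theorem \ref{GS-characterization}. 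Second, the dual-space identities in part 4 cannot be deduced ``by duality (part 5)'': part 5 is stated only for polynomial weights and $p,q<\infty$, and $\Sigma_1^1$, $\mathcal S_1^1$ are not among the spaces $M^{p,q}_{s,t}$; you need the direct argument you mention as an alternative, i.e.\ the characterization of $(\Sigma_1^1)'$ and $(\mathcal S_1^1)'$ through exponential growth bounds on $V_\gamma f$, again from \cite{GZ, Toft-2012}. With these repairs your outline is the standard proof of the theorem.
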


\begin{remark} \label{GSandmod}
Alternatively,  $ {\mathcal S}_1 ^{1} (\mathbb{R}^d)$ can also be viewed as a projective limit
(and its dual space as an inductive limit) of modulation spaces as follows:
$$
{\mathcal S}_1 ^{1} (\mathbb{R}^d) =  \bigcap _{m  \in \cap \cM _{w_s}} M _{m} ^{p,q} (\mathbb{R}^d),
\;\;\;
({\mathcal S}_1 ^{1})' (\mathbb{R}^d) =  \bigcup _{m  \in \cap \cM _{w_s}} M _{1/m} ^{p,q} (\mathbb{R}^d),
$$
where $ w_s $ is given by \eqref{eqc1}, see \cite{Toft-2017-a}.

In the context of quasianalytic Gelfand-Shilov spaces, we recall (a special case of) \cite[Theorem 3.9]{Toft-2012}:
Let $s,t> 1/2$ and set
$$
w_h (x, \omega) \equiv e^{h (|x|^{1/t} + |\omega|^{1/s})}, \;\;\; h>0, \; x,\omega \in \mathbb{R}^d.
$$
Then
$$
\Sigma_t ^s (\mathbb{R}^d)=  \bigcap _{h>  0} M _{w_{h}} ^{p,q} (\mathbb{R}^d),\;\;\;
(\Sigma_t ^s)' (\mathbb{R}^d) =  \bigcup _{h>0} M _{1/w_{h}} ^{p,q} (\mathbb{R}^d),
$$
$$
{\mathcal S}_t ^{s} (\mathbb{R}^d)
=  \bigcup _{h >  0} M _{w_{h}} ^{p,q} (\mathbb{R}^d),\;\;\;
({\mathcal S}_t ^{s})' (\mathbb{R}^d)
= \bigcap _{h > 0} M _{1/w_{h}} ^{p,q} (\mathbb{R}^d).
$$
\end{remark}

Modulation spaces include the following well-know function spaces:
\begin{enumerate}
\item $ M^2 (\mathbb{R}^d) = L^2 (\mathbb{R}^d),$  and $ M^2 _{t,0}(\mathbb{R}^d) = L^2 _t (\mathbb{R}^d);$
\item The Feichtinger algebra: $ M^1 (\mathbb{R}^d) = S_0 (\mathbb{R}^d);$
\item Sobolev spaces: $ M^2 _{0,s}(\mathbb{R}^d) = H^2 _s (\mathbb{R}^d) = \{ f \, | \,
\hat f (\omega) \langle \omega \rangle ^s \in  L^2 (\mathbb{R}^d)\};$
\item Shubin spaces: $ M^2 _{s}(\mathbb{R}^d) = L^2 _s (\mathbb{R}^d) \cap H^2 _s (\mathbb{R}^d) = Q_s (\mathbb{R}^d),$
cf. \cite{Shubin91}.
\end{enumerate}

\subsection{Convolution estimates for modulation spaces}

Different theorems concerning the convolution relation between modulation spaces can be found in the literature.
We recall the convolution estimates  given in \cite[Proposition 2.4]{CG02} and in \cite{TJPT2014}, which is sufficient for our purposes.

\begin{proposition}\label{mconvmp}
Let $m\in\cM_v$ defined on $\rdd$ and let $m_1(x)
= m(x,0) $ and $m_2(\omega ) = m(0,\omega )$,  the restrictions
to $\Ren\times\{0\}$ and  $\{0\}\times\Ren$, and likewise for $v$.
 Let $\nu (\omega )>0$ be  an arbitrary  weight function on $\Ren$ and \hfill\break\ni $1\leq
 p,q,r,s,t\leq\infty$. If
 $$\frac1p+\frac1q-1=\frac1r,\quad \,\, \text{ and } \,
 \quad\frac1t+\frac1{t'}=1\, ,$$
then
\begin{equation}\label{mconvm}
M^{p,st}_{m_1\otimes \nu}(\Ren)\ast  M^{q,st'}_{v_1\otimes
  v_2\nu^{-1}}(\Ren)\hookrightarrow M^{r,s}_m(\Ren)
\end{equation}
with  norm inequality  $\| f\ast h \|_{M^{r,s}_m}\lesssim
\|f\|_{M^{p,st}_{m_1\otimes \nu}}\|h\|_{ M^{q,st'}_{v_1\otimes
    v_2\nu^{-1}}}$.
\end{proposition}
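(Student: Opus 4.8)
The plan is to reduce the convolution estimate \eqref{mconvm} to a pointwise estimate for the short-time Fourier transform of a convolution, then to apply a weighted Young-type inequality in the mixed-norm Lebesgue spaces $L^{p,q}_m$. The starting point is the well-known identity relating the STFT of a convolution to the STFTs of the factors: if $g = g_1 \ast g_2$ is the window (one chooses $g_1, g_2 \in \cS^{(1)}$), then
$$
V_g (f \ast h)(x,\omega) = \int_{\Ren} V_{g_1} f(y, \omega)\, V_{g_2} h(x-y, \omega)\, dy,
$$
i.e., for each fixed frequency $\omega$ the map $x \mapsto V_g(f\ast h)(x,\omega)$ is the convolution in the $x$-variable of $y\mapsto V_{g_1}f(y,\omega)$ with $y\mapsto V_{g_2}h(y,\omega)$. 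This identity is the engine of the whole argument; everything else is bookkeeping with the weights.

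First I would record the weight splitting. For $m\in\cM_v$ one has the submultiplicativity-type bound
$$
m(x,\omega) \leq C\, m_1(x)\, v_2(\omega), \qquad v(x,\omega)\leq C\, v_1(x)\, v_2(\omega),
$$
obtained by writing $(x,\omega) = (x,0)+(0,\omega)$ and using $m\in\cM_v$ together with $v$ submultiplicative. This is what lets the weight $m$ on $\rdd$ be replaced, up to constants, by a tensor product of weights in the separate variables, which is exactly the form in which the hypothesis weights $m_1\otimes\nu$ and $v_1\otimes v_2\nu^{-1}$ are presented. The auxiliary weight $\nu(\omega)$ is free and is inserted and removed by writing $1 = \nu(\omega)\cdot\nu(\omega)^{-1}$ in the $\omega$-integral.

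Next, fix $\omega$ and apply the ordinary weighted Young inequality in the $x$-variable: with $\frac1p+\frac1q-1=\frac1r$,
$$
\big\| V_g(f\ast h)(\cdot,\omega)\, m_1 \big\|_{L^r_x}
\;\lesssim\;
\big\| V_{g_1}f(\cdot,\omega)\, m_1 \big\|_{L^p_x}\,
\big\| V_{g_2}h(\cdot,\omega)\, v_1 \big\|_{L^q_x},
$$
which uses $m_1(x) \lesssim m_1(y)\, v_1(x-y)$ (valid since $m_1\in\cM_{v_1}$). Then I would take the $L^s$-norm in $\omega$ of both sides, inserting $\nu(\omega)\nu(\omega)^{-1}$ and applying Hölder in $\omega$ with the split $\frac1t+\frac1{t'}=1$, i.e. $\frac1s = \frac1{st}+\frac1{st'}$: the factor carrying $\nu$ goes into the $L^{st}$-norm of the $f$-term and the factor carrying $\nu^{-1}$ into the $L^{st'}$-norm of the $h$-term. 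Collecting, this yields
$$
\|f\ast h\|_{M^{r,s}_m} \;\lesssim\; \|f\|_{M^{p,st}_{m_1\otimes\nu}}\,\|h\|_{M^{q,st'}_{v_1\otimes v_2\nu^{-1}}},
$$
after also absorbing the $v_2(\omega)$ factors appropriately into the window change and the $\nu$-Hölder step. A density argument (using $\cS^{(1)}$ dense in the relevant modulation spaces when the exponents are finite, and weak-$*$ arguments otherwise, as in Proposition \ref{emjedanve}) extends the inequality from Gelfand-Shilov functions to the whole spaces, and guarantees independence of the window choice.

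The main obstacle I anticipate is the careful tracking of the weights through the two successive applications of Young (in $x$) and Hölder (in $\omega$): one must verify that the inequalities $m(x,\omega)\lesssim m_1(y)v_1(x-y)\,v_2(\omega)$ and the placement of $\nu$ versus $\nu^{-1}$ are exactly compatible with the exponent relations $\frac1p+\frac1q-1=\frac1r$ and $\frac1t+\frac1{t'}=1$, so that no residual weight is left over. A secondary subtlety is the endpoint bookkeeping when some of $p,q,r,s,t$ equal $1$ or $\infty$, where Young's inequality degenerates and the density argument must be replaced by weak-$*$ continuity; this is handled exactly as in \cite{CG02} and \cite{TJPT2014}, to which one defers for the full details.
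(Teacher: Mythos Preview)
The paper does not actually prove this proposition; it merely recalls it from \cite[Proposition~2.4]{CG02} (and the surrounding literature) without argument. Your proposal is precisely the standard proof from that reference: the STFT convolution identity $V_{g_1\ast g_2}(f\ast h)(x,\omega)=\big(V_{g_1}f(\cdot,\omega)\ast V_{g_2}h(\cdot,\omega)\big)(x)$, followed by the weight estimate $m(x,\omega)\lesssim m_1(x)\,v_2(\omega)$, then weighted Young in $x$ (using $m_1\in\cM_{v_1}$) and finally H\"older in $\omega$ with the split $\tfrac1s=\tfrac1{st}+\tfrac1{st'}$ and the insertion $1=\nu(\omega)\cdot\nu(\omega)^{-1}$. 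The residual factor $v_2(\omega)$ that you flag as needing to be ``absorbed'' lands exactly on the $h$-term in the H\"older step, producing the weight $v_1\otimes v_2\nu^{-1}$, so nothing is left over; your bookkeeping is correct. The endpoint and density issues are routine and are handled as you indicate.
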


When the weights in Proposition \ref{mconvmp} are chosen to be of the form $ \langle  x \rangle ^t
\langle \omega \rangle ^s$, sharper continuity properties can be proved.
For such results on multiplication and convolution in modulation spaces and in weighted Lebesgue spaces we
observe  the \emph{Young functional}:
\begin{equation} \label{R-functional}
\masfR (\mathrm{p}) = \masfR (p_0,p_1,p_2) \equiv 2-\frac 1{p_0}-\frac 1{p_1}-\frac 1{p_2},\qquad \mathrm{p}=
(p_0,p_1,p_2)\in [1,\infty]^3.
\end{equation}

When $ \masfR (\mathrm{p}) = 0, $ the Young inequality for convolution reads as
$$
\| f_1 * f_2 \|_{L^{p_0 ' }} \leq \| f_1  \|_{L^{p_1 }} \| f_2 \|_{L^{p_2}}, \;\;\;
f_j \in L^{p_j }(\mathbb{R}^d), \;\; j = 1,2.
$$

%

The following theorem is an extension of the Young inequality to the case of weighted Lebesgue spaces and
modulation spaces when $ 0\le \masfR (\mathrm{p}) \le 1/2$.

\begin{theorem} \label{mainconvolution}
Let $s_j,t_j \in \mathbb R$, $p_j,q_j \in [1,\infty] $, $j=0,1,2$.
Assume that $0\le \masfR (\mathrm{p}) \le 1/2$, $\masfR (\mathrm{q})\le 1$,
\begin{eqnarray}
0&\leq t_j+t_k,   & j,k=0,1,2,  \quad j\neq k, \label{lastineq2A}
\\
0 &\leq t_0+t_1 + t_2 - d \cdot  \masfR (\mathrm{p}), & \text{and} \label{lastineq2B}
\\
0 & \le s_0+s_1+s_2, & \label{lastineq2C}
\end{eqnarray}
with strict inequality in \eqref{lastineq2B} when $\masfR (\mathrm{p})>0$ and $t_j=d\cdot
\masfR (\mathrm{p}) $ for some $j=0,1,2$.

Then  $(f_1,f_2)\mapsto
f_1*f_2$ on $C_0^\infty (\mathbb{R}^d)$ extends uniquely to a continuous
map from
\begin{enumerate}
\item  $ L^{p_1} _{t_1}(\mathbb{R}^d) \times  L^{p_2} _{t_2}(\mathbb{R}^d)$ to
$L^{p_0'} _{-t_0}(\mathbb{R}^d)$;
\item
$M^{p_1,q_1} _{s_1,t_1}(\mathbb{R}^d) \times  M^{p_2,q_2} _{s_2,t_2}(\mathbb{R}^d)$ to
$M^{p_0',q_0'} _{-s_0,-t_0}(\mathbb{R}^d)$.
\end{enumerate}
\end{theorem}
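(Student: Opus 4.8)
The plan is to put all the real work into Part (1) --- a sharp weighted Young inequality on $\mathbb{R}^d$ --- and to deduce Part (2) from it by lifting through the short-time Fourier transform. Throughout I would prove the estimates first on $C_0^\infty(\mathbb{R}^d)$ and then extend by continuity, using density of $\mathcal{S}^{(1)}$ in the \modsp s (and the $V_g^*$/weak-$*$ description in Proposition \ref{emjedanve} when an exponent equals $\infty$). For Part (1), after reducing to $f_1,f_2\ge 0$ and substituting $g_j=\langle\cdot\rangle^{t_j}f_j\in L^{p_j}$, the claim becomes, by duality against $g_0\in L^{p_0}$, the trilinear bound
\[
\int_{\mathbb{R}^{2d}}\langle x_1+x_2\rangle^{-t_0}\langle x_1\rangle^{-t_1}\langle x_2\rangle^{-t_2}\,g_0(x_1+x_2)\,g_1(x_1)\,g_2(x_2)\,dx_1\,dx_2\ \lesssim\ \prod_{j=0}^{2}\|g_j\|_{L^{p_j}}.
\]
Here I would invoke Peetre's inequality $\langle x+y\rangle^{\pm t}\lesssim\langle x\rangle^{|t|}\langle y\rangle^{|t|}$ together with the hypotheses $t_j+t_k\ge 0$ to dominate the three bracket weights by a product in which each of $x_1$, $x_2$, $x_1+x_2$ carries a nonpositive power, and then estimate the integral by one multilinear Hölder step on the hyperplane $\{x_0=x_1+x_2\}$. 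That the resulting exponents are simultaneously admissible --- i.e. that the leftover bracket power is integrable over its variable --- is exactly what $0\le\masfR(\mathrm{p})\le\tfrac12$ and \eqref{lastineq2B} encode; the strict inequality required in \eqref{lastineq2B} when some $t_j=d\cdot\masfR(\mathrm{p})$ is precisely what excludes the borderline divergence $\int\langle x\rangle^{-d}\,dx$. (Equivalently, Part (1) can be obtained by multilinear complex interpolation between the extreme points of the polytope cut out by \eqref{lastineq2A}--\eqref{lastineq2B}, where the estimate degenerates to Young's, Hölder's, or a Hausdorff--Young-type inequality.)

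For Part (2), I would choose nonzero windows $g_1,g_2\in\mathcal{S}^{(1)}(\mathbb{R}^d)$ with $g:=g_1*g_2\neq 0$ (e.g. Gaussians); since $\mathcal{S}^{(1)}=\Sigma_1^1$ is stable under convolution, $g\in\mathcal{S}^{(1)}$ is an admissible window by Proposition \ref{emjedanve}. An elementary change of variables yields the convolution identity
\[
V_{g_1*g_2}(f_1*f_2)(x,\omega)=\big(V_{g_1}f_1(\cdot,\omega)*V_{g_2}f_2(\cdot,\omega)\big)(x),
\]
so that for each fixed $\omega$, Part (1) applied in the $x$-variable gives $\|V_g(f_1*f_2)(\cdot,\omega)\|_{L^{p_0'}_{-t_0}}\lesssim F(\omega)G(\omega)$ with $F(\omega)=\|V_{g_1}f_1(\cdot,\omega)\|_{L^{p_1}_{t_1}}$ and $G(\omega)=\|V_{g_2}f_2(\cdot,\omega)\|_{L^{p_2}_{t_2}}$. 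Taking the weighted $L^{q_0'}$-norm in $\omega$ then closes the argument: when $\masfR(\mathrm{q})=1$, i.e. $1/q_1+1/q_2=1/q_0'$, Hölder's inequality together with $s_0+s_1+s_2\ge 0$ gives
\[
\|f_1*f_2\|_{M^{p_0',q_0'}_{-s_0,-t_0}}\lesssim\|\langle\cdot\rangle^{-s_0}FG\|_{L^{q_0'}}\lesssim\|\langle\cdot\rangle^{s_1}F\|_{L^{q_1}}\|\langle\cdot\rangle^{s_2}G\|_{L^{q_2}}=\|f_1\|_{M^{p_1,q_1}_{s_1,t_1}}\|f_2\|_{M^{p_2,q_2}_{s_2,t_2}};
\]
the general case $\masfR(\mathrm{q})\le 1$ reduces to this one via the embeddings $M^{p_j,q_j}_{s_j,t_j}\hookrightarrow M^{p_j,\tilde q_j}_{s_j,t_j}$ of Theorem \ref{modproerties}, with $1/\tilde q_j=\lambda/q_j$ and $\lambda=(1/q_0')(1/q_1+1/q_2)^{-1}\le 1$, which is legitimate precisely because $\masfR(\mathrm{q})\le 1$ forces $1/q_1+1/q_2\ge 1/q_0'$. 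The $p=\infty$ or $q=\infty$ endpoints are absorbed into these steps with the usual modifications.

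The mechanical part is Part (2): once the STFT convolution identity is recorded, it is bookkeeping with Hölder and the standard \modsp\ embeddings, and the frequency (the $q,s$) layer decouples completely from the time (the $p,t$) layer --- which is why only $s_0+s_1+s_2\ge 0$ and $\masfR(\mathrm{q})\le 1$ are needed there. The genuine obstacle is Part (1): organizing the three bracket weights by Peetre's inequality so that a single multilinear Hölder step succeeds, and pinning down that the sharp threshold for this is exactly $\masfR(\mathrm{p})\le\tfrac12$ together with the occasionally strict condition \eqref{lastineq2B}. I expect the endpoint bookkeeping (interpolation through the vertices of the constraint polytope, or the divergent logarithmic case) to be the most delicate accounting, but not conceptually hard.
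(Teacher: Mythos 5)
Your reduction of Part (2) to Part (1) is sound and is in fact the standard route (and essentially the route of the cited source \cite{TJPT2014}): the identity $V_{g_1*g_2}(f_1*f_2)(x,\omega)=\big(V_{g_1}f_1(\cdot,\omega)*V_{g_2}f_2(\cdot,\omega)\big)(x)$ is correct, the window $g_1*g_2\in\mathcal S^{(1)}$ is admissible by Proposition \ref{emjedanve}, and the frequency layer does decouple: weighted H\"older in $\omega$ at the exact index $1/q_1+1/q_2=1/q_0'$ plus the embedding $M^{p,q}\hookrightarrow M^{p,\tilde q}$ for $\tilde q\ge q$ handles $\masfR(\mathrm{q})\le 1$ together with $s_0+s_1+s_2\ge 0$. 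The paper itself does not reprove Part (1); it refers to \cite{TJPT2014}, where the proof proceeds by a careful decomposition of an auxiliary trilinear form over regions of $\mathbb{R}^{2d}$.

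The genuine gap is exactly at the point you flag as the "genuine obstacle": for Part (1), a single global Peetre domination followed by one multilinear H\"older step cannot work under the stated hypotheses, because \eqref{lastineq2A} only forces the \emph{pairwise} sums $t_j+t_k$ to be nonnegative, so individual $t_j$ may be negative. If, say, $t_0<0$, the factor $\langle x_1+x_2\rangle^{-t_0}$ grows and must be absorbed; but any global pointwise bound of the form $\langle x_1\rangle^{-t_1}\langle x_2\rangle^{-t_2}\langle x_1+x_2\rangle^{-t_0}\lesssim\langle x_1\rangle^{-a}\langle x_2\rangle^{-b}\langle x_1+x_2\rangle^{-c}$ with $a,b,c\ge 0$ forces (testing along $x_2=-x_1\to\infty$, along $x_1\to\infty$ with $x_2$ fixed, and along $x_2\to\infty$ with $x_1$ fixed) $a+b\le t_1+t_2$, $a+c\le t_0+t_1$, $b+c\le t_0+t_2$, hence $a+b+c\le t_0+t_1+t_2$ with equality only when $(a,b,c)=(t_1,t_2,t_0)$ --- i.e.\ you cannot redistribute a negative exponent onto the other variables without strictly decreasing the total decay. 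Concretely, for $t_0=-5$, $t_1=t_2=6$ the crude absorption $\langle x_1+x_2\rangle^{5}\le\langle x_1\rangle^{5}\langle x_2\rangle^{5}$ leaves total decay $2$ instead of $7$, so condition \eqref{lastineq2B} is no longer available to compensate $\masfR(\mathrm{p})>0$, and your H\"older step fails precisely in the range the theorem is designed to cover. The fix is what \cite{TJPT2014} does: split $\mathbb{R}^{2d}$ into regions according to the relative sizes of $|x_1|,|x_2|,|x_1+x_2|$ (so that, e.g., on $|x_1|\le|x_2|$ one uses $\langle x_1+x_2\rangle\lesssim\langle x_2\rangle$ rather than the full Peetre bound) and apply a \emph{different} H\"older/Young grouping on each region; the endpoint proviso (strict inequality in \eqref{lastineq2B} when some $t_j=d\,\masfR(\mathrm{p})$) emerges from this case analysis and is not recovered by your sketch. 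Your alternative remark about multilinear interpolation between the vertices of the constraint polytope is plausible in spirit but is not a proof either: the relevant endpoints are themselves weighted borderline estimates (some of which fail without the strictness condition), and interpolation with change of power weights needs justification.
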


For the proof we refer to \cite{TJPT2014}.
It is based on the detailed study of an auxiliary three-linear map
over carefully chosen regions in $ \mathbb{R}^d $ (see Subsections 3.1 and 3.2 in \cite{TJPT2014}).
This result extends multiplication and convolution properties obtained in \cite{PTT2}.
Moreover, the sufficient conditions from Theorem \ref{mainconvolution} are also necessary in the following sense.

\begin{theorem}\label{otpmimality1}
Let $p_j,q_j\in [1,\infty ]$ and $s_j,t_j\in \mathbb R$, $j=0,1,2$. Assume that at
least one of the following statements hold true:
\begin{enumerate}
\item the map $(f_1,f_2)\mapsto f_1*f_2$ on $C_0^\infty (\mathbb{R}^d)$ is continuously
extendable to a map from $L^{p_1}_{t_1}(\mathbb{R}^d)\times
L^{p_2}_{t_2}(\mathbb{R}^d)$ to $L^{p_0'}_{-t_0}(\mathbb{R}^d)$;

\vrum

\item the map $(f_1,f_2)\mapsto f_1*f_2$ on $C_0^\infty (\mathbb{R}^d)$ is continuously
extendable to a map from $M^{p_1,q_1}_{s_1,t_1}(\mathbb{R}^d)\times
M^{p_2,q_2}_{s_2,t_2}(\mathbb{R}^d)$ to $M^{p_0',q_0'}
_{-s_0,-t_0}(\mathbb{R}^d)$;

\end{enumerate}
Then \eqref{lastineq2A} and \eqref{lastineq2B} hold true.
\end{theorem}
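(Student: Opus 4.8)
The plan is to establish necessity by testing the (extended) convolution inequality on two explicit one‑parameter families of functions: translated fixed bumps force \eqref{lastineq2A}, while dilated bumps force \eqref{lastineq2B}. Since in both (1) and (2) the map is obtained by continuous extension from $C_0^\infty (\mathbb{R}^d)\times C_0^\infty (\mathbb{R}^d)$, and a fixed Gaussian $\phi(x)=e^{-\pi|x|^2}$ together with its translates and dilates is approximable by compactly supported smooth functions in $L^{p}_{t}(\mathbb{R}^d)$ and in $M^{p,q}_{s,t}(\mathbb{R}^d)$, the extended bilinear map agrees with the honest convolution on such functions; hence it suffices to test $\|f_1*f_2\|\lesssim\|f_1\|\,\|f_2\|$ on Gaussians. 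In the modulation‑space case one uses in addition that the weight in Definition \ref{defmodnorm} splits as $\langle x\rangle^{t}\langle\omega\rangle^{s}$ and that all the test functions below have essentially bounded frequency content, so that the $M^{p,q}_{s,t}$‑norm of a family concentrated (or spread) only in the space variable scales exactly like the corresponding $L^{p}_{t}$‑norm, the parameters $q$ and $s$ contributing only harmless constants. Thus the two cases run in parallel.

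For \eqref{lastineq2A}: to obtain $t_1+t_2\ge0$ take $f_1=T_{x_0}\phi$, $f_2=T_{-x_0}\phi$, so that $f_1*f_2=\phi*\phi$ is a fixed function while $\|f_1\|\asymp\langle x_0\rangle^{t_1}$ and $\|f_2\|\asymp\langle x_0\rangle^{t_2}$ (Peetre's inequality and Gaussian decay make these comparabilities uniform in $x_0$, for every real $t_j$); the inequality then reads $1\lesssim\langle x_0\rangle^{t_1+t_2}$, and letting $|x_0|\to\infty$ forces $t_1+t_2\ge0$. To obtain $t_0+t_1\ge0$ take $f_1=T_{x_0}\phi$ and $f_2=\phi$ fixed, so $f_1*f_2=T_{x_0}(\phi*\phi)$, $\|f_1\|\asymp\langle x_0\rangle^{t_1}$, $\|f_2\|\asymp1$, $\|f_1*f_2\|\asymp\langle x_0\rangle^{-t_0}$, whence $\langle x_0\rangle^{-t_0}\lesssim\langle x_0\rangle^{t_1}$ and $t_0+t_1\ge0$; the symmetric choice gives $t_0+t_2\ge0$.

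For \eqref{lastineq2B}: take $f_1=f_2=\phi_R$ with $\phi_R(x)=e^{-\pi|x|^2/R^2}$ and let $R\to\infty$. A direct computation gives $\phi_R*\phi_R(x)=2^{-d/2}R^{d}e^{-\pi|x|^2/(2R^2)}$, i.e. height $\asymp R^{d}$ on a ball of radius $\asymp R$; consequently $\|\phi_R\|_{L^{p_j}_{t_j}}\asymp R^{t_j+d/p_j}$ and $\|\phi_R*\phi_R\|_{L^{p_0'}_{-t_0}}\gtrsim R^{d}\,R^{-t_0+d/p_0'}$, with the analogous estimates in the modulation norms. Feeding these into $\|f_1*f_2\|\lesssim\|f_1\|\,\|f_2\|$ yields $R^{d-t_0+d/p_0'}\lesssim R^{t_1+t_2+d/p_1+d/p_2}$ for all large $R$, hence $d-t_0+d/p_0'\le t_1+t_2+d/p_1+d/p_2$; using $1/p_0'=1-1/p_0$ this rearranges precisely to $t_0+t_1+t_2\ge d(2-1/p_0-1/p_1-1/p_2)=d\,\masfR(\mathrm p)$, which is \eqref{lastineq2B}.

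\textbf{Main obstacle.}
The delicate point is the bookkeeping of parameter regimes in the last step: the scaling $\|\phi_R\|_{L^{p_j}_{t_j}}\asymp R^{t_j+d/p_j}$ holds only when $t_j+d/p_j>0$, and likewise $\int_{|x|\le R}\langle x\rangle^{-t_0p_0'}\,dx\asymp R^{d-t_0p_0'}$ only when $t_0p_0'<d$; in the opposite regimes the relevant norm saturates to a constant and the naive power count is weaker. One must then verify that the inequality produced in each degenerate regime, combined with the pairwise bounds \eqref{lastineq2A} already established, still implies \eqref{lastineq2B} — equivalently, one may replace the Gaussians by weighted truncations $\langle x\rangle^{-\gamma}\chi_{B_R}$ with an optimized exponent $\gamma$ so as to remain in the generic regime. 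Carrying out this case analysis, together with checking uniformity of all implied constants in the translation/dilation parameter, is the only substantial work; for the details we refer to \cite{TJPT2014}.
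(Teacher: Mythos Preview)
The paper does not prove this theorem at all; it simply refers the reader to \cite{TJPT2014} for the proof, as it does for Theorem \ref{mainconvolution}. Your sketch --- translated bumps to force the pairwise conditions \eqref{lastineq2A} and dilated bumps to force the sum condition \eqref{lastineq2B}, with the observation that for these families the modulation-space norms scale like the corresponding weighted Lebesgue norms --- is the standard method and is indeed what is carried out in \cite{TJPT2014}, so your proposal is consistent with the paper and you correctly identify the only real subtlety (the degenerate parameter regimes in the dilation step) before deferring to the same reference.
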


\section{Localization operators} \label{localization}

We refer to \cite{CG02, CPRT1, CPRT2, Teof2016} for the continuity properties of localization operators on modulation spaces,
and here we give a reformulation of such results by using the Grossmann-Royer transform
instead of the cross-Wigner distribution. Furthermore, we use the Grossmann-Royer transform  to
define localization operators and to show that such operators are Weyl pseudodifferential operators.
For our purposes the duality between
$\mathcal{S} ^{(1)}(\mathbb{R}^d)$ and $ (\mathcal{S} ^{(1)})'(\mathbb{R}^d)$
will suffice, and we use it here for the simplicity and for the clarity of exposition.

\begin{definition} \label{locopdef}
Let $ f \in  {\mathcal S}^{( 1)} (\mathbb{R}^d)$.
The {\em localization operator} $A_a ^{\varphi _1, \varphi _2} $ with {\em symbol}
$a  \in  {\mathcal S} ^{(1)'} (\mathbb{R}^{2d} )$ and
{\em windows} $\varphi _1, \varphi _2 \in {\mathcal S} ^{(1)} (\mathbb{R}^d ) $
is given by
\begin{equation} \label{locopGRT}
A_a ^{\varphi _1, \varphi _2} f(t)=\int_{\mathbb{R}^{2d} } a (x,\omega ) R_{\check{\varphi _1}} f (\frac{x}{2},\frac{\omega}{2})
R (\check{\varphi _2} (t)) (\frac{x}{2},\frac{\omega}{2})
\, dx d\omega.
\end{equation}
\end{definition}

In the weak sense,
\begin{eqnarray} \label{locopGRTweak}
  \langle A_a ^{\varphi _1, \varphi _2} f,g \rangle & = &\langle  a (x,\omega ) R_{\check{\varphi_1}}f (\frac{x}{2},\frac{\omega}{2}),
  R_{\check{\varphi_2}}g (\frac{x}{2},\frac{\omega}{2})  \rangle \\
   & = & \langle  a (x,\omega ), R_{\check{\varphi_1}}f (\frac{x}{2}\frac{\omega}{2}), R_{\check{\varphi_2}}g (\frac{x}{2},\frac{\omega}{2})
  \rangle,
  \quad f,g\in  {\mathcal S} ^{(1)} (\mathbb{R}^d ),
\end{eqnarray}
so that $ A_a ^{\varphi _1, \varphi _2} $ is well-defined continuous operator from
$ {\mathcal S} ^{(1)} (\mathbb{R}^d ) $ to $({\mathcal S} ^{(1)})' (\mathbb{R}^d ) $, cf. Proposition \ref{svojstva}.

\begin{lemma} \label{locopsame}
Let there be given $ f\varphi _1, \varphi _2 \in  {\mathcal S}^{( 1)} (\mathbb{R}^d)$ and
$a  \in  {\mathcal S} ^{(1)'} (\mathbb{R}^{2d} )$.
Then $ A_a ^{\varphi _1, \varphi _2} $ given by \eqref{locopGRT} coincides
with the  usual localization operator  $\tilde{A}_a ^{\varphi _1, \varphi _2} $ defined by the short-time Fourier transform:
\begin{equation*}
\tilde{A}_a ^{\varphi _1, \varphi _2}  f(t)=\int_{\mathbb{R}^{2d} } a (x,\omega ) V_{\varphi _1} f (x,\omega ) M_\omega T_x \varphi _2 (t)
\, dx d\omega.
\end{equation*}
\end{lemma}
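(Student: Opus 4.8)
The plan is to unwind both definitions and reduce the identity to the already-established dictionary between the Grossmann-Royer transform and the short-time Fourier transform from Lemma \ref{GRandRelatives}. Recall that that lemma gives $V_g f (x,\omega) = e^{-\pi i x \omega} R_{\check g} f (\frac x2, \frac\omega 2)$, equivalently $R_{\check g} f (\frac x2,\frac\omega 2) = e^{\pi i x\omega} V_g f(x,\omega)$. So the two factors appearing in the integrand of \eqref{locopGRT}, namely $R_{\check{\varphi_1}} f (\frac x2,\frac\omega 2)$ and $R(\check{\varphi_2}(t))(\frac x2,\frac\omega 2)$, can each be replaced: the first becomes $e^{\pi i x\omega} V_{\varphi_1} f(x,\omega)$, and the second, viewed as a function of $t$ with $(x,\omega)$ as parameters, must be rewritten as a modulation-translation of $\varphi_2$.

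First I would establish the pointwise identity $R(\check{\varphi_2}(t))(\tfrac x2,\tfrac\omega 2) = e^{\pi i x\omega}\, M_\omega T_x \varphi_2(t)$. This is a direct computation from the definition \eqref{GROp} of the Grossmann-Royer operator: $R(h)(\tfrac x2,\tfrac\omega 2)(t) = e^{4\pi i \frac\omega 2 (t - \frac x2)} h(2\cdot\frac x2 - t) = e^{2\pi i \omega(t-\frac x2)} h(x-t)$, and with $h = \check{\varphi_2}$ one gets $h(x-t) = \varphi_2(t-x) = T_x\varphi_2(t)$, so the whole expression equals $e^{-\pi i x\omega} e^{2\pi i \omega t} T_x\varphi_2(t) = e^{-\pi i x\omega} M_\omega T_x \varphi_2(t)$. (I should be careful with the sign of the prefactor here; it is precisely the phase $e^{-\pi i x\omega}$ that appears on the $V$-side of Lemma \ref{GRandRelatives}, which is exactly what is needed for cancellation.)

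Then I would substitute both rewrites into \eqref{locopGRT}: the integrand becomes
$$
a(x,\omega)\, \big(e^{\pi i x\omega} V_{\varphi_1} f(x,\omega)\big)\, \big(e^{-\pi i x\omega} M_\omega T_x \varphi_2(t)\big)
= a(x,\omega)\, V_{\varphi_1} f(x,\omega)\, M_\omega T_x \varphi_2(t),
$$
so that the two exponential phases cancel and the integral is exactly $\tilde A_a^{\varphi_1,\varphi_2} f(t)$. To make this rigorous at the stated level of generality ($f,\varphi_1,\varphi_2 \in \mathcal S^{(1)}$, $a \in (\mathcal S^{(1)})'$), I would interpret the equality in the weak sense, pairing against a test function $g \in \mathcal S^{(1)}(\mathbb R^d)$ and using the weak formulation \eqref{locopGRTweak}: the left side is $\langle a, R_{\check{\varphi_1}} f (\tfrac\cdot 2,\tfrac\cdot 2)\overline{R_{\check{\varphi_2}} g(\tfrac\cdot 2,\tfrac\cdot 2)}\rangle$, and applying the Lemma \ref{GRandRelatives} conversion to both slots gives the phases $e^{\pi i x\omega}$ and $\overline{e^{\pi i x\omega}} = e^{-\pi i x\omega}$, which again cancel, yielding $\langle a, V_{\varphi_1} f \cdot \overline{V_{\varphi_2} g}\rangle$, the weak form of $\tilde A_a^{\varphi_1,\varphi_2}$. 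One checks the pairing is well-defined because $V_{\varphi_1} f \cdot \overline{V_{\varphi_2} g} \in \mathcal S^{(1)}(\mathbb R^{2d})$ by Theorem \ref{nec-suf-cond} (applied as in Proposition \ref{svojstva}, item 6, adapted to Gelfand-Shilov spaces).

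I do not expect a serious obstacle here; the statement is essentially a bookkeeping exercise in changes of variables and tracking phase factors, all of which is already encoded in Lemma \ref{GRandRelatives}. The one place to be vigilant is the consistent use of the paper's Fourier normalization and the convention $\check h(x) = h(-x)$, since a sign error in the exponent would spoil the cancellation of the $e^{\pm\pi i x\omega}$ factors; so the main "hard part", such as it is, is simply to carry out the $t$-variable computation of $R(\check{\varphi_2}(t))(\tfrac x2,\tfrac\omega 2)$ with the correct signs and verify it matches the $V_{\varphi_2}$-side phase from the lemma. Once that single identity is in hand, the rest is immediate.
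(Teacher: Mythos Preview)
Your proposal is correct and follows essentially the same route as the paper: both arguments rest on the identity $R(\check{\varphi_2}(t))(\tfrac{x}{2},\tfrac{\omega}{2}) = e^{-\pi i x\omega} M_\omega T_x \varphi_2(t)$ (computed directly from \eqref{GROp}) together with the relation $V_{\varphi_1} f(x,\omega) = e^{-\pi i x\omega} R_{\check{\varphi_1}} f(\tfrac{x}{2},\tfrac{\omega}{2})$ from Lemma \ref{GRandRelatives}, after which the two phases cancel. The only cosmetic difference is that the paper starts from $\tilde A$ and arrives at $A$, whereas you go the other way; also note that your first display of the $\varphi_2$-identity has the sign $e^{+\pi i x\omega}$, but your subsequent computation and parenthetical correctly yield $e^{-\pi i x\omega}$, which is what makes the cancellation work.
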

\par

\begin{proof}
From Lemma \ref{GRandRelatives} it follows that
\begin{eqnarray*}
\tilde{A}_a ^{\varphi _1, \varphi _2}  f(t) & = &
\int_{\mathbb{R}^{2d} } a (x,\omega ) e^{-\pi i x \omega} R_{\check{\varphi _1}} f (x,\omega ) M_\omega T_x \varphi _2 (t)
\, dx d\omega \\
& = &  \int_{\mathbb{R}^{2d} } a (x,\omega ) e^{-\pi i x \omega} R_{\check{\varphi _1}} f (x,\omega )
  e^{\pi i x \omega} R (\check{\varphi _2} (t)) (\frac{x}{2},\frac{\omega}{2})
\, dx d\omega \\
 & = &  A_a ^{\varphi _1, \varphi _2} f(t),
\end{eqnarray*}
since
\begin{eqnarray*}
R (\check{\varphi _2} (t)) (\frac{x}{2},\frac{\omega}{2})
& = & e^{4 \pi i \frac{\omega}{2}(t-\frac{x}{2})} \check{\varphi _2} (2 \frac{x}{2} - t) \\
& = & e^{-  \pi i \omega x} e^{2 \pi i \omega t} \varphi _2 (t-x) \\
& = & e^{- \pi i \omega x} M_\omega T_x \varphi _2 (t).
\end{eqnarray*}
and the lemma is proved.
\qed
\end{proof}

\par

Next we show that localization operators can be represented as  Weyl pseudodifferential operators.

Recall, if  $\sigma \in {\mathcal S}^{( 1)} (\mathbb{R}^{2d})$ then the
Weyl pseudodifferential operator $ L_\sigma$ is defined as the oscillatory integral:
$$
L_\sigma f (x) = \iint \sigma (\frac{x+y}{2}, \omega) f(y) e^{2\pi (x-y)\cdot \omega} dy d\omega, \;\;\;
f \in {\mathcal S}^{( 1)} (\mathbb{R}^{2d}).
$$
It extends to each $\sigma \in {\mathcal S}^{( 1)'} (\mathbb{R}^{2d})$, and then
$L_\sigma$ is continuous from $ {\mathcal S}^{( 1)} (\mathbb{R}^{2d}) $ to
$ {\mathcal S}^{( 1)'} (\mathbb{R}^{2d})$, and $\sigma $ is called {\em the Weyl symbol} of the  pseudodifferential operator $L_\sigma$.

\par

\begin{lemma} \label{Weyl psido char}
Let  $L_\sigma$ be the Weyl pseudodifferential operator with the Weyl symbol
$\sigma \in {\mathcal S}^{( 1)'} (\mathbb{R}^{2d})$. Then we have
$$
L_\sigma f (t) = 2^d \int_{\mathbb{R}^{2d}}  \sigma (x,\omega ) (R f(t)) (x,\omega ) dx d\omega, \;\;\; t \in \mathbb{R}^d,
$$
or, in the weak sense
\begin{equation} \label{WeylpsidoGRT}
 \langle L_\sigma f,g \rangle = 2^d \langle \sigma, R_{f} g \rangle,
 \quad\quad f,g\in {\mathcal S}^{( 1)} (\mathbb{R}^d).
\end{equation}
\end{lemma}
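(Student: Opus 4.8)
The plan is to reduce \eqref{WeylpsidoGRT} to the classical identity $\langle L_\sigma f,g\rangle=\langle\sigma,W(g,f)\rangle$ relating the Weyl operator to the cross-Wigner distribution, and then to invoke Lemma \ref{GRandRelatives}. I would first take $\sigma\in\mathcal S^{(1)}(\mathbb R^{2d})$ and $f,g\in\mathcal S^{(1)}(\mathbb R^d)$, so that all integrals below converge absolutely and Fubini's theorem applies. Starting from the oscillatory-integral definition of $L_\sigma$ and computing $\langle L_\sigma f,g\rangle$, the substitution $u=\tfrac{x+y}2$, $v=x-y$ (whose Jacobian has modulus $1$) turns the triple integral into
$$
\langle L_\sigma f,g\rangle=\iiint \sigma(u,\omega)\,f\big(u-\tfrac v2\big)\,e^{2\pi iv\omega}\,\overline{g\big(u+\tfrac v2\big)}\,dv\,d\omega\,du ,
$$
and a comparison of the inner $v$-integral with \eqref{WD} shows it equals $\overline{W(g,f)(u,\omega)}$; hence $\langle L_\sigma f,g\rangle=\langle\sigma,W(g,f)\rangle$ in the duality convention of the paper. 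Applying Lemma \ref{GRandRelatives} with the roles of $f$ and $g$ interchanged gives $W(g,f)=2^dR_fg$, and therefore $\langle L_\sigma f,g\rangle=2^d\langle\sigma,R_fg\rangle$, which is \eqref{WeylpsidoGRT}.

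For the pointwise formula I would unfold the pairing on the right. By Proposition \ref{svojstva} {\em 2.} one has $\overline{R_fg(x,\omega)}=R_gf(x,\omega)=\int (Rf(t))(x,\omega)\,\overline{g(t)}\,dt$ (the last equality being \eqref{GRT}), so interchanging the order of integration in $2^d\langle\sigma,R_fg\rangle$ exhibits $\langle L_\sigma f,g\rangle$ as $\int\big(2^d\iint\sigma(x,\omega)(Rf(t))(x,\omega)\,dx\,d\omega\big)\overline{g(t)}\,dt$. Since $g$ ranges over the dense subspace $\mathcal S^{(1)}(\mathbb R^d)$, the function in brackets must coincide with $L_\sigma f$, which is the first displayed identity of the lemma.

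Finally, I would pass from $\sigma\in\mathcal S^{(1)}$ to arbitrary $\sigma\in(\mathcal S^{(1)})'(\mathbb R^{2d})$ by continuity: with $f,g\in\mathcal S^{(1)}(\mathbb R^d)$ fixed one has $R_fg\in\mathcal S^{(1)}(\mathbb R^{2d})$ by Theorem \ref{nec-suf-cond}, so both sides of \eqref{WeylpsidoGRT} are weak$^*$-continuous in $\sigma$ and the identity persists. The main obstacle is precisely justifying the Fubini interchanges above and, more fundamentally, making sense of the oscillatory integral defining $L_\sigma$ when $\sigma$ is merely an ultradistribution; I would handle this exactly as in the proofs of Proposition \ref{svojstva} {\em 6.} and Theorem \ref{nec-suf-cond} — establish the identity first on test functions, where all manipulations are elementary, and then transport it by duality, using that the pullback $\tau^*$, the partial Fourier transforms and dilations act continuously on the relevant Gelfand-Shilov spaces.
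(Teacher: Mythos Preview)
Your proof is correct. The route differs slightly from the paper's: you pass through the classical identity $\langle L_\sigma f,g\rangle=\langle\sigma,W(g,f)\rangle$ via the substitution $u=\tfrac{x+y}{2}$, $v=x-y$, and then invoke Lemma~\ref{GRandRelatives} to replace $W(g,f)$ by $2^dR_fg$. The paper instead starts from $2^d\langle\sigma,R_fg\rangle$, expands $R_fg$ by its defining integral, and applies the single change of variables $2x-y\mapsto t$ to obtain the Weyl kernel form directly, without the cross-Wigner distribution as an intermediate. Your approach has the advantage of reusing results already proved (Lemma~\ref{GRandRelatives}); the paper's is marginally more self-contained but amounts to the same computation unfolded. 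Your handling of the pointwise formula and the extension to $\sigma\in(\mathcal S^{(1)})'$ by density and duality is also fine, and in fact more explicit than what the paper writes.
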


\begin{proof}
The lemma is the same as \cite[Proposition 40]{deGosson2017}. We give here a different proof.
In fact we only use Fubini's theorem and the change of variables $ 2x - y \mapsto t  $:
\begin{eqnarray*}
2^d \langle \sigma, R_{f} g \rangle
& = & 2^d \int \int \int \sigma (x,\omega ) e^{-4 \pi i \omega(y-x)}
\overline{g} (2x -y ) f(y) dy dx d\omega \\
& = & 2^d \int \int \left ( \int \sigma (x,\omega ) e^{4 \pi i \omega x}
\overline{g} (2x -y ) dx \right ) e^{-4 \pi i \omega y} f(y) dy d\omega \\
& = &  \int \int \int \sigma (\frac{t+y}{2},\omega ) e^{2 \pi i \omega (t+y)}
\overline{g} (t ) dt e^{-4 \pi i \omega y} f(y) dy d\omega \\
& = & \int \int \int \sigma (\frac{t+y}{2},\omega ) e^{2 \pi i \omega (t-y)}
f(y) \overline{g} (t ) dt  dy d\omega,
\end{eqnarray*}
therefore $ 2^d \langle \sigma, R_{f} g \rangle  =   \langle L_\sigma f,g \rangle.$
\qed
\end{proof}

Lemmas \ref{GRandRelatives} and \ref{Weyl psido char} imply the well-known formula:
$ \langle L_\sigma f,g \rangle = \langle \sigma, W (g,f) \rangle,$ cf. \cite{folland89, Shubin91, Wong1998}.

\par

Next we establish the so called Weyl connection, which shows that the set of
localization operators is a  subclass of the set of Weyl operators. Although the same result
can be found elsewhere (\cite{folland89, BCG02, Teof2016}),
it is given here in order to be self-contained. The proof is
based on  kernel theorem for Gelfand-Shilov spaces, and direct calculation.

\begin{lemma} \label{Weyl connection lemma}
If $ a \in  {\mathcal S}^{( 1)'} (\mathbb{R}^{2d})$ and
$\varphi _1, \varphi _2 \in {\mathcal S} ^{(1)} (\mathbb{R}^d ) $, then
the localization operator $A_a ^{\varphi _1, \varphi _2}  $
is Weyl pseudodifferential operator with the Weyl symbol
$ \sigma = 2^{-d} a\ast R_{\varphi_1} \varphi_2 $, in other words,
\begin{equation} \label{Weyl connection}
A_a ^{\varphi _1, \varphi _2}  = 2^{-d} L_{a\ast R_{\varphi_1} \varphi_2}.
\end{equation}
\end{lemma}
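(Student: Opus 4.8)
The plan is to show that $A_a^{\varphi _1,\varphi _2}$ and the Weyl operator appearing on the right of \eqref{Weyl connection} have the same Schwartz kernel. Both are well-defined continuous operators $\mathcal S^{(1)}(\mathbb R^d)\to(\mathcal S^{(1)})'(\mathbb R^d)$: for $A_a^{\varphi _1,\varphi _2}$ this is the remark following \eqref{locopGRTweak}, for $L_\sigma$ it is recalled before Lemma \ref{Weyl psido char}, and $a\ast R_{\varphi_1}\varphi_2\in(\mathcal S^{(1)})'(\mathbb R^{2d})$ because $R_{\varphi_1}\varphi_2\in\mathcal S^{(1)}(\mathbb R^{2d})$ by Theorem \ref{nec-suf-cond}. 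By the kernel theorem (Theorem \ref{kernelteorema}, with a sequence $(M_p)$ for which $\mathcal S^{M_p}_{M_p}=\Sigma^1_1=\mathcal S^{(1)}$) such an operator is uniquely determined by a kernel in $(\mathcal S^{(1)})'(\mathbb R^{2d})$; moreover the passage from a Weyl symbol to the corresponding kernel is the composition of the linear substitution $(t,s)\mapsto(\tfrac{t+s}{2},t-s)$ with a partial Fourier transform, both homeomorphisms on the Gelfand--Shilov scale by Theorem \ref{nec-suf-cond}, hence $\sigma\mapsto(\text{kernel of }L_\sigma)$ is a bijection. So it suffices to compute the two kernels and compare.

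First I would compute the kernel of $A_a^{\varphi _1,\varphi _2}$. Starting from the short-time Fourier transform form of Lemma \ref{locopsame}, $A_a^{\varphi _1,\varphi _2}f(t)=\int a(x,\omega)V_{\varphi _1}f(x,\omega)M_\omega T_x\varphi _2(t)\,dx\,d\omega$, I insert the definition \eqref{GT} of $V_{\varphi _1}f$ and interchange the order of integration — legitimate once the oscillatory integrals are read in the $(\mathcal S^{(1)})'$ sense, exactly as in the proof of Proposition \ref{svojstva} — to get that the kernel of $A_a^{\varphi _1,\varphi _2}$ is
$$
K_A(t,s)=\iint a(x,\omega)\,e^{2\pi i\omega(t-s)}\,\varphi _2(t-x)\,\overline{\varphi _1(s-x)}\,dx\,d\omega .
$$

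Next I would recover the Weyl symbol associated with $K_A$ by the inversion $\sigma_A(y,\eta)=\int K_A(y+\tfrac u2,y-\tfrac u2)\,e^{-2\pi i u\eta}\,du$, which is precisely the inverse of the defining relation $K_\sigma(t,s)=\int\sigma(\tfrac{t+s}{2},\omega)e^{2\pi i(t-s)\omega}\,d\omega$ for the Weyl calculus. Performing the change of variable $v=y-x$ in the resulting triple integral and recognising the integral in $u$ as a cross-Wigner distribution \eqref{WD}, one obtains
$$
\sigma_A(y,\eta)=\iint a(y-v,\eta-\xi)\,W(\varphi _2,\varphi _1)(v,\xi)\,dv\,d\xi=(a\ast W(\varphi _2,\varphi _1))(y,\eta),
$$
and then Lemma \ref{GRandRelatives} ($W(\varphi _2,\varphi _1)=2^{d}R_{\varphi _1}\varphi _2$) rewrites $\sigma_A$ in terms of $R_{\varphi _1}\varphi _2$ with the normalisation of \eqref{Weyl connection}; by the uniqueness in the first paragraph this is the assertion of the lemma. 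A coordinate-free variant that stays within the Grossmann--Royer formalism is also available: write $A_a^{\varphi _1,\varphi _2}=\int a(x,\omega)\,\big(u\mapsto\langle u,\pi(x,\omega)\varphi _1\rangle\,\pi(x,\omega)\varphi _2\big)\,dx\,d\omega$ with $\pi(x,\omega)=M_\omega T_x$ (the phase discrepancy with $T_xM_\omega$ is immaterial, it cancels in the rank-one operator), note via \eqref{WeylpsidoGRT} that the Weyl symbol of $u\mapsto\langle u,\varphi _1\rangle\varphi _2$ is a multiple of $R_{\varphi _1}\varphi _2$, and use the covariance of the Grossmann--Royer transform, Proposition \ref{svojstva}(3), to turn the $(x,\omega)$-integral into a convolution, the Moyal identity Proposition \ref{svojstva}(5) discharging the duality brackets.

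The only real difficulty is bookkeeping: justifying the interchanges of the divergent oscillatory integrals (routine in the distributional/Gelfand--Shilov framework used throughout the paper), and, above all, keeping precise track of the numerical and dyadic-dilation constants that appear when one passes between $V_{\varphi _1}f$, $W(\varphi _2,\varphi _1)$ and $R_{\varphi _1}\varphi _2$ via Lemma \ref{GRandRelatives} and between the equivalent normalisations of the Weyl symbol in Lemma \ref{Weyl psido char}; there is no conceptual obstacle once the kernel theorem has reduced the statement to an identity between two explicit kernels.
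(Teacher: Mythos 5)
Your overall strategy is sound and, in its second half, genuinely different from the paper's. Both arguments reduce the lemma, via the kernel theorem (Theorem \ref{kernelteorema}) and uniqueness of kernels, to a comparison of two kernels; but the paper computes the kernel of $A_a^{\varphi_1,\varphi_2}$ directly from the Grossmann--Royer form \eqref{locopGRTweak} and then computes $\langle a\ast R_{\varphi_1}\varphi_2, R_f g\rangle$ by hand, using the covariance property (Proposition \ref{svojstva}, item 3) to absorb the convolution into $R_{T_xM_\omega\varphi_1}T_xM_\omega\varphi_2$ and delta-function manipulations to collapse the integrals, whereas you start from the STFT form of Lemma \ref{locopsame}, write down the kernel $K_A$, and invert the Weyl correspondence on it, landing first on the classical identity $\sigma_A=a\ast W(\varphi_2,\varphi_1)$ and only then translating into the Grossmann--Royer language. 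Your kernel $K_A(t,s)$, the inversion formula, and the identification of the $u$-integral with the cross-Wigner distribution are all correct, and your route has the merit of recovering the standard Weyl connection (as in Cordero--Gr\"ochenig) with transparent constants; your ``coordinate-free variant'' is essentially the classical covariance argument and also works.

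The gap is precisely the point you defer to ``bookkeeping''. Carried to the end, your computation gives $\sigma_A=a\ast W(\varphi_2,\varphi_1)$ and, by Lemma \ref{GRandRelatives} ($W(\varphi_2,\varphi_1)=2^{d}R_{\varphi_1}\varphi_2$), hence $A_a^{\varphi_1,\varphi_2}=2^{d}L_{a\ast R_{\varphi_1}\varphi_2}$, which is \emph{not} ``the normalisation of \eqref{Weyl connection}'': the statement asserts the factor $2^{-d}$. So the sentence in which you claim the constants match is unestablished and, as written, false; you cannot wave the dyadic factors away, because they are exactly what distinguishes $2^{-d}$ from $2^{d}$. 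To be fair, the discrepancy is not a defect of your method: the paper's own proof is internally inconsistent on this point (its final line concludes $A_a^{\varphi_1,\varphi_2}=L_{a\ast R_{\varphi_1}\varphi_2}$ with no factor at all, and a careful account of $\int e^{4\pi i q(t-s)}\,dq=2^{-d}\delta(t-s)$ together with the substitution $2p-t\mapsto s$ in its computation produces the factor $2^{d}$, in agreement with your route). But a complete proof must either establish the constant claimed in the statement or point out that the correct normalisation is $\sigma=a\ast W(\varphi_2,\varphi_1)=2^{d}\,a\ast R_{\varphi_1}\varphi_2$; simply asserting agreement with \eqref{Weyl connection} leaves the one nontrivial piece of the argument unverified.
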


\begin{proof}
By the kernel Theorem \ref{kernelteorema}
it follows that for any
linear and continuous operator $T$ from $ {\mathcal S}^{( 1)} (\mathbb{R}^{2d}) $ to
$ {\mathcal S}^{( 1)'} (\mathbb{R}^{2d})$, there exists a uniquely determined
$ k \in {\mathcal S}^{( 1)'} (\mathbb{R}^{2d})$ such that
$$
\langle Tf, g \rangle = \langle k, g \otimes \overline{f} \rangle, \;\;\;
f,g \in {\mathcal S}^{( 1)} (\mathbb{R}^{2d}),
$$
see also \cite{LCPT, Teof2016, TKNN2012}.

We first calculate the kernel of $A_a ^{\varphi _1, \varphi _2}  $, and than show that it coincides to the kernel of $ L_{\sigma} $ when
$ \sigma = 2^{-d} a\ast R_{\varphi_1} \varphi_2$.

From \eqref{locopGRTweak} and Proposition \ref{svojstva} {\em 2.} it follows:
\begin{multline*}
\langle \aaf f, g \rangle
 =  \langle  a (x,\omega ) R_{\check{\varphi_1}}f (\frac{x}{2},\frac{\omega}{2}),
  R_{\check{\varphi_2}}g (\frac{x}{2},\frac{\omega}{2})
  \rangle \\
 =  \langle  a (x,\omega ) R_{\check{\varphi_1}}f (\frac{x}{2},\frac{\omega}{2}),
  R_{\check{\varphi_2}}g (\frac{x}{2},\frac{\omega}{2})
  \rangle \\
= \iint_{\mathbb{R}^{2d}} a(x,\omega)
\left ( \int_{\mathbb{R}^{d}} f(y) \overline{R ( \check{\varphi _1} (y))} (\frac{x}{2},\frac{\omega}{2}) dy \right )
\left ( \int_{\mathbb{R}^{d}} \overline{g} (t) R ( \check{\varphi _2} (t)) (\frac{x}{2},\frac{\omega}{2}) dt \right ) dx d\omega
\\
= \int_{\mathbb{R}^{d}} \int_{\mathbb{R}^{d}} f(y) \overline{g} (t)
\left ( \iint_{\mathbb{R}^{2d}} a(x,\omega) \overline{R ( \check{\varphi _1} (y))} (\frac{x}{2},\frac{\omega}{2})
R ( \check{\varphi _2} (t)) (\frac{x}{2},\frac{\omega}{2}) dx d\omega \right )   dt  dy
\\
=
\langle k, g \otimes \overline{f} \rangle,
\end{multline*}
where
\begin{equation} \label{kernel}
k(t,y) =  \int_{\mathbb{R}^{2d}} a(x,\omega) \overline{R ( \check{\varphi _1 } (y))} (\frac{x}{2},\frac{\omega}{2})
R ( \check{\varphi _2} (t)) (\frac{x}{2},\frac{\omega}{2}) dx d\omega.
\end{equation}

Next, we calculate the kernel of $ L_{a\ast R_{\varphi_1} \varphi_2} $.
We use the covariance property of the Grossmann-Royer transform, Proposition \ref{svojstva} {\em 3.}, to obtain
\begin{multline*}
a \ast R_{ \varphi_1}  \varphi_2 (p,q)
\\[1ex]
=
\iint_{\mathbb{R}^{2d}}  a(x,\omega) R_{ \varphi_1}  \varphi_2 (p-x,q-\omega) dx d\omega
\\[1ex]
=
\iint_{\mathbb{R}^{2d}}  a(x,\omega) R_{T_x M_\omega  \varphi_1} T_x M_\omega  \varphi_2  (p,q) dx d\omega
\\[1ex]
=
\iint_{\mathbb{R}^{2d}}  a(x,\omega)
\left ( \int_{\mathbb{R}^{d}} e^{4\pi i q (t-p)} T_x M_\omega  \varphi_2( 2p-t)
\overline{ T_x M_\omega  \varphi_1}
(t) dt \right) dx d\omega.
\end{multline*}

Now,
\begin{multline*}
\langle  a\ast R_{ \varphi_1}  \varphi_2, R_f  g \rangle
\\
=
\iint_{\mathbb{R}^{4d}}   a(x,\omega)
\left ( \int_{\mathbb{R}^{d}} e^{4\pi i q (t-p)} T_x M_\omega  \varphi_2( 2p-t)
\overline{ T_x M_\omega  \varphi_1}
(t) dt \right)  dx d\omega
\\
\times \left (\int_{\mathbb{R}^{d}}
e^{-4\pi i q (s-p)}
\overline{g} (2p -s) f(s) ds \right ) dp dq
\\
=
\iint_{\mathbb{R}^{3d}}   a(x,\omega)
\left ( \int_{\mathbb{R}^{d}} e^{4\pi i q (t-s)} dq
\int_{\mathbb{R}^{d}} T_x M_\omega  \varphi_2( 2p-t)
\overline{ T_x M_\omega   \varphi_1} (t) dt \right)
\\
\times \left (\int_{\mathbb{R}^{d}}
\overline{g} (2p -s) f(s) ds \right ) dp   dx d\omega
\end{multline*}
\begin{multline*}
=
\iint_{\mathbb{R}^{3d}}   a(x,\omega)
\delta (t-s)
\int_{\mathbb{R}^{d}} T_x M_\omega  \varphi_2( 2p-t)
\overline{ T_x M_\omega  \varphi_1} (t) dt
\\
\times
\left (\int_{\mathbb{R}^{d}}
\overline{g} (2p -s) f(s) ds \right )  dp   dx d\omega
\\
=
\iint_{\mathbb{R}^{4d}}   a(x,\omega)
T_x M_\omega  \varphi_2( 2p-t)
\overline{ T_x M_\omega  \varphi_1} (t)
\overline{g} (2p -t) f(t) dt  dp  dx d\omega
\end{multline*}
\begin{multline*}
=
\iint_{\mathbb{R}^{4d}}   a(x,\omega)
T_x M_\omega  \varphi_2( 2p-t)
\overline{ T_x M_\omega  \varphi_1} (t)
\overline{g} (2p -t) f(t) dt   dp  dx d\omega
\\
=
2^{-d}  \iint_{\mathbb{R}^{4d}}   a(x,\omega)
T_x M_\omega  \varphi_2( s)
\overline{ T_x M_\omega  \varphi_1} (t)
\overline{g} (s) f(t) dt ds dx d\omega
\\=
2^{-d}  \iint_{\mathbb{R}^{4d}}   a(x,\omega)
R (\check{\varphi_2} ( s)) (\frac{x}{2},\frac{\omega}{2})
\overline{ R( (\check{\varphi_1} (s))} (\frac{x}{2},\frac{\omega}{2})
\overline{g} (s) f(t) dt ds dx d\omega,
\end{multline*}
where we deal with  the oscillatory integral as we did before, and use the change of variables
$ 2p -t \mapsto t$.

Therefore
$$
\langle L_{ a\ast R_{\varphi_1} \varphi_2 } f, g \rangle =
\langle k, g \otimes \overline{f} \rangle,
$$
where the $k$ is given by \eqref{kernel}. By the uniqueness of the kernel we conclude that
$ \aaf  = L_{a\ast R_{\varphi_1} \varphi_2 },$
and the proof is finished.
\qed
\end{proof}

Note that Lemma \ref{Weyl connection lemma} can be proved in quasianalytic case
by the same arguments. However, in this paper we do not need such an extension.
Notice also that in the literature the symbol $a$ in Lemma \ref{Weyl connection lemma} is called the
{\em anti-Wick symbol} of the  Weyl pseudodifferential  operator $L_{\sigma}$.

\par

Lemma \ref{Weyl connection lemma} describes localization operators
in terms of the convolution which is a smoothing operator. This implies
different boundedness results of localization operators even if $a$ is an ultradistribution.
In what follows we review some of these results.

\subsection{Continuity properties} \label{cont-prop}

In this subsection we recall the continuity properties from \cite{Teof2016} obtained by
using the relation between the Weyl pseudodifferential operators and localization operators,
Lemma \ref{Weyl connection lemma}, and convolution results for modulation spaces
from Theorem \ref{mainconvolution}.

We also use sharp continuity results from \cite{CN}.
There it is shown that the sufficient conditions for the continuity of the cross-Wigner distribution on
modulation spaces are also necessary (in the un-weighted case).
Related results can be found elsewhere, e.g.  in \cite{Toft2, To8, Teof2016}. In many situations such results overlap.
For example, Proposition 10 in \cite{Teof2018} coincides with certain sufficient conditions from \cite[Theorem 1.1]{CN}
when restricted to $ \masfR (\mathrm{p}) = 0,$ $t_0 = -t_1,$ and $ t_2 = |t_0|$.
For our purposes it is convenient to rewrite \cite[Theorem 1.1]{CN}
in terms of the Grossmann-Royer transform.

\begin{theorem} \label{thm-cross-Wigner}
Let there be given $ s \in \mathbb{R}$  and $ p_i, q_i, p, q \in [1,\infty],$ such that
\begin{equation} \label{uslov1}
p \leq p_i, q_i \leq q, \;\;\; i = 1,2
\end{equation}
and
\begin{equation} \label{uslov2}
\min \left \{ \frac{1}{p_1} + \frac{1}{p_2},   \frac{1}{q_1} + \frac{1}{q_2} \right \}
\geq
\frac{1}{p} + \frac{1}{q}.
\end{equation}
If $f,g \in \mathcal{S} (\mathbb{R}^{d}), $
then the map $ (f,g) \mapsto  R_g f $
extends to sesquilinear continuous map from
$ M^{p_1, q_1} _{|s|} (\mathbb{R}^{d}) \times  M^{p_2,q_2} _{s} (\mathbb{R}^{d})  $ to
$ {M}^{p,q} _{s,0} (\mathbb{R}^{2d}) $ and
\begin{equation} \label{cross-Wig-mod}
\|  R_g f \|_{ {M}^{p,q} _{s,0} } \lesssim \| f\|_{M^{p_1, q_1} _{|s|}}  \|g \|_{M^{p_2,q_2} _{s} }.
\end{equation}
Viceversa, if there exists a constant $C>0$ such that
$$
\|  R_g f \|_{ {M}^{p,q}} \lesssim \| f\|_{M^{p_1, q_1} }  \|g \|_{M^{p_2,q_2} }.
$$
then \eqref{uslov1} and \eqref{uslov2} must hold.
\end{theorem}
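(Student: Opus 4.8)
The starting point is Lemma \ref{GRandRelatives}, which gives $W(f,g)=2^{d}R_gf$ \emph{pointwise}: the Grossmann–Royer transform and the cross-Wigner distribution differ only by the constant $2^{d}$, so any continuity statement for one is equivalent to the corresponding statement for the other. Hence the theorem is, up to this harmless constant, a transcription of \cite[Theorem 1.1]{CN} into the present notation, and the plan is to recall the structure of that argument and how Lemma \ref{GRandRelatives} makes it available here. I would split the proof into the sufficiency part (\eqref{uslov1}--\eqref{uslov2} imply the bound \eqref{cross-Wig-mod}) and the necessity part (the bound forces \eqref{uslov1}--\eqref{uslov2}).

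For sufficiency, the first step is to fix once and for all a Gaussian $\phi$ and the window $\Phi:=R_{\phi}\phi\in\mathcal S(\mathbb R^{2d})$, so that by Proposition \ref{emjedanve} one may compute $\|R_gf\|_{M^{p,q}_{s,0}}$ as a weighted mixed norm of $V_{\Phi}(R_gf)$ on $\mathbb R^{4d}$. The engine is the ``magic formula'' expressing the STFT of a Grossmann–Royer transform as a product of two STFTs: there are linear bijections $\mathcal A,\mathcal B$ of $\mathbb R^{4d}$ and a constant $c$ with
\[
\bigl|V_{\Phi}(R_gf)(z,\zeta)\bigr|\;=\;c\,\bigl|V_{\phi}f(\mathcal A(z,\zeta))\bigr|\,\bigl|V_{\phi}g(\mathcal B(z,\zeta))\bigr|,
\qquad (z,\zeta)\in\mathbb R^{2d}\times\mathbb R^{2d},
\]
which one derives from the identity \eqref{GRrewritten} together with the behaviour of the STFT under the partial Fourier transform $\mathcal F_2$ and the pullback $\tau^{*}$ (the phase factor drops out after taking absolute values). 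The second step is to apportion the output weight: using submultiplicativity of $\langle\cdot\rangle^{s}$ one places $\langle\cdot\rangle^{|s|}$ on the $f$-factor and $\langle\cdot\rangle^{s}$ on the $g$-factor — this is exactly where the asymmetry $|s|$ versus $s$ in \eqref{cross-Wig-mod} comes from. The third step is to estimate the resulting mixed-norm integral of a product by iterated Hölder and Minkowski inequalities along the fibres of $\mathcal A$ and $\mathcal B$; the admissible exponent range is precisely \eqref{uslov1} (so that the intermediate $\ell^{p}$/$\ell^{q}$ summations run in the right direction) and \eqref{uslov2} (the Young-defect condition needed for the convolution-type integrations). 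I expect the bookkeeping of the changes of variables $\mathcal A,\mathcal B$ — since mixed-norm $L^{p,q}$ with $p\neq q$ is not invariant under linear changes of the output variables — together with the weight splitting to be the most technical part, but it is routine given \eqref{GRrewritten} and Proposition \ref{emjedanve}.

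For necessity, the standard device is to test the hypothetical inequality $\|R_gf\|_{M^{p,q}}\lesssim\|f\|_{M^{p_1,q_1}}\|g\|_{M^{p_2,q_2}}$ on dilated Gaussians. Taking $f=\phi$ a fixed Gaussian and $g=g_{\lambda}$ with $g_{\lambda}(t)=\phi(\lambda t)$, one computes all the modulation-space norms explicitly (Gaussians have Gaussian STFTs, so every $M^{p,q}$-norm is an elementary Gaussian integral), extracts the exact power of $\lambda$ on each side, and lets $\lambda\to0^{+}$ and $\lambda\to\infty$; matching exponents forces \eqref{uslov2} and the half of \eqref{uslov1} involving the $p_i$, while a dual choice (dilating $f$ instead, or combining with the inclusion relations of Theorem \ref{modproerties}) forces the half involving the $q_i$. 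The delicate point, and the main obstacle in this direction, is that $\|g_{\lambda}\|_{M^{p_2,q_2}}$ has genuinely different asymptotics as $\lambda\to0^{+}$ and as $\lambda\to\infty$, so one must track which of the two inequalities survives in each regime and read off the sharp constraints from the surviving ones. The full details of both directions are in \cite{CN}.
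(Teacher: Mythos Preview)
Your proposal is correct and takes essentially the same approach as the paper: both reduce to \cite[Theorem 1.1]{CN} via the identity $W(f,g)=2^{d}R_gf$ from Lemma \ref{GRandRelatives}. The paper's own proof is in fact nothing more than this reduction and a pointer to \cite[Section 3]{CN}; your sketch of the sufficiency argument (the factorization formula for $V_\Phi(R_gf)$, weight splitting, H\"older/Minkowski) and of the necessity argument (testing on dilated Gaussians) accurately describes the content of that reference, so you have simply supplied more detail than the paper chose to include.
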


\begin{proof}
We refer to \cite[Section 3]{CN} for  the proof. It is given there in terms of the
cross-Wigner distribution, which is the same as the Grossmann-Royer transform, up to the constant factor $2^d$.
\qed
\end{proof}

Let $\sigma $ be the Weyl symbol of $ L_\sigma $.
By \cite[Theorem 14.5.2]{Gro}
if $ \sigma \in M^{\infty, 1} (\mathbb{R}^{2d}) $ then $L_\sigma $ is bounded on
$ M^{p,q} (\mathbb{R}^{d})$, $ 1\leq p,q \leq \infty $.
This result has a long history starting with the Calderon-Vaillancourt theorem on boundedness of
pseudodifferential operators with smooth and bounded symbols on $ L^2 (\mathbb{R}^{d})$, \cite{CalVai}.
It is extended by Sj{\"o}strand in \cite{Sjostrand1}
where $M^{\infty,1}$ is used as appropriate symbol class.
Sj{\"o}strand's results were thereafter extended in
\cite{Grochenig0,Gro,Grochenig1b,Toft2,To8,Toft2007a}.

\par

\begin{theorem} \label{conv-a-cross-Wigner}
Let the assumptions of Theorem \ref{mainconvolution} hold.
If $\varphi _j \in M^{p_j} _{t_j} (\mathbb{R}^{d}), $ $ j=1,2$,
and $ a \in M^{\infty, r} _{u,v} ( \mathbb{R}^{2d})$ where $ 1\leq r \leq p_0 $,
$u \geq t_0 $ and $ v \geq d \masfR (\mathrm{p}) $ with
$ v > d \masfR (\mathrm{p}) $ when $ \masfR (\mathrm{p}) > 0 $,
then $ A _a ^{\varphi_1, \varphi_2} $ is bounded on $ M^{p,q} (\mathbb{R}^{d}), $
for all $ 1\leq p,q\leq \infty $ and the operator norm satisfies the uniform estimate
$$
\|A _a ^{\varphi_1, \varphi_2}\|_{op} \lesssim  \|a\|_{M^{\infty, r} _{u,v}} \|\varphi_1\|_{M^{p_1} _{t_1}}
\|\varphi_2\|_ {M^{p_2} _{t_2}}.
$$
\end{theorem}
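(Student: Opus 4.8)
The plan is to reduce the boundedness of $A_a^{\varphi_1,\varphi_2}$ on $M^{p,q}(\mathbb{R}^d)$ to the combination of two facts already available in the text: the Weyl connection (Lemma \ref{Weyl connection lemma}), which says $A_a^{\varphi_1,\varphi_2} = 2^{-d} L_{a * R_{\varphi_1}\varphi_2}$, and the Sj\"ostrand-type boundedness result quoted just above, namely that $L_\sigma$ is bounded on every $M^{p,q}(\mathbb{R}^d)$, $1 \le p,q \le \infty$, as soon as $\sigma \in M^{\infty,1}(\mathbb{R}^{2d})$, with $\|L_\sigma\|_{op} \lesssim \|\sigma\|_{M^{\infty,1}}$. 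So the whole problem collapses to showing
$$
\|a * R_{\varphi_1}\varphi_2\|_{M^{\infty,1}} \lesssim \|a\|_{M^{\infty,r}_{u,v}} \, \|\varphi_1\|_{M^{p_1}_{t_1}} \, \|\varphi_2\|_{M^{p_2}_{t_2}},
$$
and the operator norm estimate then follows by absorbing the factor $2^{-d}$ into the implicit constant.

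First I would handle the window factor. By Lemma \ref{GRandRelatives} the Grossmann-Royer transform $R_{\varphi_1}\varphi_2$ is, up to the constant $2^{-d}$ and a reflection/dilation, the cross-Wigner distribution $W(\varphi_2,\varphi_1)$; applying Theorem \ref{thm-cross-Wigner} (which is stated for $R_g f$ directly) with the index choice $p_1 = p_2$ forced to produce the target space $M^{1}_{t_0,0}$ or, more precisely, a weighted $M^{\rho}_{t_0}(\mathbb{R}^{2d})$ with $\rho$ chosen compatibly with the convolution step, one gets $R_{\varphi_1}\varphi_2 \in M^{\rho}_{\kappa}(\mathbb{R}^{2d})$ with a norm bounded by $\|\varphi_1\|_{M^{p_1}_{t_1}}\|\varphi_2\|_{M^{p_2}_{t_2}}$, provided the indices $p_1,p_2$ and the weights $t_1,t_2$ satisfy the hypotheses of Theorem \ref{mainconvolution} — which is exactly what ``Let the assumptions of Theorem \ref{mainconvolution} hold'' is there to guarantee. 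The bookkeeping of which weighted modulation space $R_{\varphi_1}\varphi_2$ lands in, as a function of $t_1,t_2$ and $\masfR(\mathrm p)$, is precisely the content of conditions \eqref{lastineq2A}--\eqref{lastineq2C}.

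Next I would invoke the convolution relation for modulation spaces, Theorem \ref{mainconvolution} 2), with the output space $M^{\infty,1}(\mathbb{R}^{2d})$: one needs a triple $(p_0,p_1,p_2)$ and $(q_0,q_1,q_2)$ with $p_0' = \infty$, $q_0' = 1$, i.e. $p_0 = 1$ and $q_0 = \infty$, so that $M^{p_0',q_0'}_{-s_0,-t_0} = M^{\infty,1}_{-s_0,-t_0}$, and one checks $0 \le \masfR(\mathrm p) \le 1/2$ and $\masfR(\mathrm q) \le 1$ are compatible with the stated ranges $1 \le r \le p_0$ on the $a$-side. The factor $a$ is taken in $M^{\infty,r}_{u,v}(\mathbb{R}^{2d})$ with $u \ge t_0$ and $v \ge d\,\masfR(\mathrm p)$ (strict when $\masfR(\mathrm p) > 0$), and $R_{\varphi_1}\varphi_2$ in the complementary weighted space from the previous paragraph; then \eqref{lastineq2A}--\eqref{lastineq2C} are exactly the inequalities that make Theorem \ref{mainconvolution} applicable and yield $a * R_{\varphi_1}\varphi_2 \in M^{\infty,1}(\mathbb{R}^{2d})$ (the target weight being trivial, i.e. $s_0 = t_0 = 0$ on the output, which is consistent since $L_\sigma$ only needs unweighted $M^{\infty,1}$) with the norm estimate $\|a * R_{\varphi_1}\varphi_2\|_{M^{\infty,1}} \lesssim \|a\|_{M^{\infty,r}_{u,v}} \|R_{\varphi_1}\varphi_2\|_{M^{\rho}_{\kappa}}$. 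Combining the three displayed estimates — Weyl connection, Theorem \ref{thm-cross-Wigner} for the windows, Theorem \ref{mainconvolution} for the convolution, and finally the Sj\"ostrand boundedness of $L_\sigma$ on $M^{p,q}$ — gives the claim.

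The main obstacle, and the only genuinely delicate point, is the index and weight bookkeeping: one must verify that the conditions imposed on $r,u,v$ and on $\varphi_1 \in M^{p_1}_{t_1}$, $\varphi_2 \in M^{p_2}_{t_2}$ in the statement are precisely what is needed to (i) place $R_{\varphi_1}\varphi_2$ via Theorem \ref{thm-cross-Wigner} into the weighted space $M^{\rho}_{\kappa}(\mathbb{R}^{2d})$ that is ``dual'' to $M^{\infty,r}_{u,v}$ with respect to the convolution Theorem \ref{mainconvolution}, and (ii) ensure the Young-type functionals $\masfR(\mathrm p), \masfR(\mathrm q)$ and the weight inequalities \eqref{lastineq2A}--\eqref{lastineq2C} hold with the right side $M^{\infty,1}$. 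I would carry this out by first writing down the forced choices $p_0 = 1$, $q_0 = \infty$, reading off $\masfR(\mathrm p) = 1 - 1/p_1 - 1/p_2$, and then checking each of \eqref{lastineq2A}, the strict-when-positive version of \eqref{lastineq2B}, and \eqref{lastineq2C} against the hypotheses $u \ge t_0$, $v \ge d\,\masfR(\mathrm p)$; the rest is a routine chaining of norm inequalities, so I would only sketch it as in the paper's stated style of ``we only sketch the proofs''.
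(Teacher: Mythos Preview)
Your approach is correct and essentially identical to the paper's: Weyl connection (Lemma \ref{Weyl connection lemma}), then Theorem \ref{thm-cross-Wigner} to place $R_{\varphi_1}\varphi_2$ in a weighted modulation space, then Theorem \ref{mainconvolution} for the convolution so that the Weyl symbol lands in $M^{\infty,1}$, and finally the Sj\"ostrand--Gr\"ochenig boundedness of $L_\sigma$ on $M^{p,q}$.

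One caution on the bookkeeping, which is the only place your sketch drifts: the triple $(p_0,p_1,p_2)$, the weights $(t_0,t_1,t_2)$ and the quantity $\masfR(\mathrm p)$ in the \emph{statement} are inherited from the hypotheses of Theorem \ref{mainconvolution} and are \emph{not} free for you to redefine. In particular your line ``the forced choices $p_0 = 1$, $q_0 = \infty$, reading off $\masfR(\mathrm p) = 1 - 1/p_1 - 1/p_2$'' overwrites the given $p_0$. The paper keeps the two roles separate: it first uses Theorem \ref{thm-cross-Wigner} (with the given $p_1,p_2,t_1,t_2$) to obtain $R_{\varphi_1}\varphi_2 \in M^{1,p_0'}_{-t_0,0}(\mathbb{R}^{2d})$, and then applies Theorem \ref{mainconvolution} with a \emph{fresh} triple --- $p$-indices $(\tilde p', \infty, 1)$ and $q$-indices $(\infty, r, p_0')$ --- so that $a * R_{\varphi_1}\varphi_2 \in M^{\tilde p,1}$ for $\tilde p \ge 2$; the stated conditions $1\le r\le p_0$, $u\ge t_0$, $v\ge d\,\masfR(\mathrm p)$ are then exactly what is needed to verify the hypotheses of that second application (and of Theorem \ref{thm-cross-Wigner}). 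Once you separate the two sets of indices, your argument goes through verbatim.
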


\begin{proof}
Let  $\varphi _j \in M^{p_j} _{t_j} (\mathbb{R}^{d}), $ $ j=1,2$.
Then, by Theorem \ref{thm-cross-Wigner} it follows that
$ R_{\varphi_1} \varphi_2 \in M^{1, p_0 '} _{-t_0,0} (\mathbb{R}^{2d}). $
This fact, together with Theorem \ref{mainconvolution} {\em (2)} implies that
$$
a *  R_{\varphi_1} \varphi_2  \in M^{\tilde p, 1} (\mathbb{R}^{2d}), \;\;\; \tilde p \geq 2,
$$
if the involved parameters fulfill the conditions of the theorem.
Concerning  the Lebesgue parameters it is easy to see that
$ \tilde p \geq 2 $ is equivalent to $ \masfR (\mathrm{p}) = \masfR (p, \infty, 1) \in [0, 1/2],$ and
that $ 1\leq r \leq p_0 $ is equivalent to $ \masfR (\mathrm{q}) = \masfR  (\infty, r , p_0 ') \leq 1$.
It is also straightforward to check that
the choice of the weight parameters $u$ and $v$ implies that
$ a * R_{\varphi_1} \varphi_2  \in M^{\tilde p, 1}  (\mathbb{R}^{2d})$, $ \tilde p \geq 2 $.

In particular, if $\tilde p= \infty$ then $ a *  R_{\varphi_1} \varphi_2 \in M^{\infty, 1}  (\mathbb{R}^{2d})$.
From \cite[Theorem 14.5.2]{Gro} (and Lemma \ref{Weyl connection lemma})
it follows that $ A _a ^{\varphi_1, \varphi_2} $ is bounded on
$ M^{p,q} (\mathbb{R}^{d}), $ $ 1\leq p,q\leq \infty $.

The operator norm estimate also follows from \cite[Theorem 14.5.2]{Gro}.
\qed
\end{proof}

\par

\begin{remark}
When $ p_1 = p_2 = 1$, $ r = p_0 = \infty $ and $ t_1=t_2 = - t_0 = s \geq 0 $,
$u = -s, $ $ v = 0 $  we recover the celebrated Cordero-Gr\"ochenig
Theorem, \cite[Theorem 3.2]{CG02}, in the case of polynomial weights, with the uniform estimate
$$
\|A _a ^{\varphi_1, \varphi_2}\|_{op} \lesssim  \|a\|_{M^{\infty} _{-s,0}} \|\varphi_1\|_{M^{1} _{s}}
\|\varphi_2\|_ {M^{1} _{s}}
$$
in our notation.

Another version of Theorem \ref{conv-a-cross-Wigner} with symbols from weighted modulation spaces can be obtained by using
\cite[Theorem 14.5.6]{Gro} instead.
We leave this to the reader as an exercise.
\end{remark}

\subsection{Schatten-von Neumann properties} \label{schvonneumann}

In this subsection we recall the compactness properties from \cite{Teof2016}.

References to the proof of the following  well known theorem can be found in \cite{CG02}.

\begin{theorem} \label{sigma-Schatten}
Let $\sigma $ be the Weyl symbol of $ L_\sigma $.
\begin{enumerate}
\item If $ \sigma \in M^1 (\mathbb{R}^{2d})$  then $ \| L_\sigma \|_{S_1} \lesssim \| \sigma \|_{M^1}.$
\item If $ \sigma \in M^p (\mathbb{R}^{2d})$, $ 1\leq p \leq 2 $,
then $ \| L_\sigma \|_{S_p} \lesssim \| \sigma \|_{M^p}.$
\item If $ \sigma \in M^{p,p'} (\mathbb{R}^{2d})$, $ 2\leq p \leq \infty $,
then $ \| L_\sigma \|_{S_p} \lesssim \| \sigma \|_{M^{p,p'}}.$
\end{enumerate}
\end{theorem}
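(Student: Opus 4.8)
The plan is to reduce the three statements to two well-understood extreme cases and then interpolate. First I would record the two endpoints. For $p=2$ the key observation is that Weyl quantization is, up to normalization, unitary from $L^2(\mathbb{R}^{2d})$ onto the Hilbert-Schmidt class $S_2$: by the Moyal identity (Proposition \ref{svojstva} {\em 5.}) together with Lemma \ref{Weyl psido char}, one has $\|L_\sigma\|_{S_2}=\|\sigma\|_{L^2}$, and since $M^2(\mathbb{R}^{2d})=L^2(\mathbb{R}^{2d})$ with equivalent norms (see the list of coincidences in Section \ref{Modulation Spaces}), this gives $\|L_\sigma\|_{S_2}\lesssim\|\sigma\|_{M^2}$. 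For $p=1$ the statement is item {\em 1.}; the standard route is to use a Gabor-type expansion of $\sigma\in M^1(\mathbb{R}^{2d})$ coming from the inversion formula $I_{M^1}=\langle g,\psi\rangle^{-1}V_g^*V_\psi$ of Proposition \ref{emjedanve}, writing $\sigma$ as an absolutely convergent superposition $\sigma=\int_{\mathbb{R}^{4d}}V_\psi\sigma(z)\,M_{z_2}T_{z_1}\psi\,dz$ with $\|V_\psi\sigma\|_{L^1}\asymp\|\sigma\|_{M^1}$. Each building block $M_{z_2}T_{z_1}\psi$ is the Weyl symbol of a rank-one (hence trace-class) operator whose $S_1$-norm is controlled uniformly by $\|\psi\|_{L^2}^2$ — this is where the phase-space covariance of the Weyl calculus and the fact that $\psi\in\mathcal{S}^{(1)}$ is used — so integrating the $S_1$ estimate over $z$ yields $\|L_\sigma\|_{S_1}\lesssim\|\sigma\|_{M^1}$.

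With the two endpoints $L\colon M^1\to S_1$ and $L\colon M^2\to S_2$ in hand, item {\em 2.} follows by complex interpolation: the modulation spaces satisfy $(M^1,M^2)_{[\theta]}=M^p$ for $1/p=1-\theta/2$, i.e. $\theta=2/p'$, $1\le p\le 2$ (interpolation of mixed-norm Lebesgue spaces on the short-time Fourier transform side), while the Schatten classes satisfy $(S_1,S_2)_{[\theta]}=S_p$ for the same $p$. Since $\sigma\mapsto L_\sigma$ is a fixed linear map bounded on both endpoints, the interpolation theorem gives $\|L_\sigma\|_{S_p}\lesssim\|\sigma\|_{M^p}$ for $1\le p\le 2$. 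For item {\em 3.}, with $2\le p\le\infty$, I would interpolate instead between the pair $L\colon M^2\to S_2$ and $L\colon M^{\infty,1}\to S_\infty=B(L^2(\mathbb{R}^d))$; the latter is the boundedness result of Sj\"ostrand / \cite[Theorem 14.5.2]{Gro} already cited in the text. Here $(M^2,M^{\infty,1})_{[\theta]}=M^{p,p'}$ for the appropriate $\theta$ (the second exponent drops from $2$ to $1$ as the first rises from $2$ to $\infty$, giving the conjugate pair $p,p'$), and $(S_2,S_\infty)_{[\theta]}=S_p$, so $\|L_\sigma\|_{S_p}\lesssim\|\sigma\|_{M^{p,p'}}$.

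The main obstacle is the $p=1$ endpoint: proving that an $M^1$ symbol gives a trace-class operator with the stated norm bound. The delicate points are (i) justifying the vector-valued integral $L_\sigma=\int V_\psi\sigma(z)\,L_{\pi(z)\psi}\,dz$ as an absolutely convergent Bochner integral in $S_1$, which requires the uniform bound $\sup_z\|L_{\pi(z)\psi}\|_{S_1}<\infty$ and measurability, and (ii) checking the $S_1$-norm of the elementary Weyl operators $L_{\pi(z)\psi}$ — one identifies $L_{\pi(z)\psi}$ with a translate-modulate of $L_\psi$ in phase space via the symplectic covariance of the Weyl transform, and $L_\psi$ itself is seen to be (a multiple of) a rank-one projection-type operator built from $\psi$, hence trace-class with norm $\asymp\|\psi\|^2$. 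Everything else — the interpolation identities for $M^p$ and for $S_p$, and invoking the two cited boundedness theorems — is standard, so I would state those and cite \cite{CG02, Gro} rather than reprove them.
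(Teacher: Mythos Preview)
Your interpolation strategy is exactly the standard one and is correct in outline; the paper itself does not give a proof at all but simply states ``References to the proof of the following well known theorem can be found in \cite{CG02}.'' So there is nothing to compare against except the cited literature, and your sketch is precisely the argument one finds there (and in \cite[Chapter~14]{Gro}): the $p=2$ endpoint via Moyal/Plancherel, the $p=\infty$ endpoint via Sj\"ostrand's theorem, the $p=1$ endpoint via an $M^1$-atomic decomposition, and then complex interpolation of the linear map $\sigma\mapsto L_\sigma$ between the appropriate pairs.

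One point in your $p=1$ argument deserves tightening. You write that each building block $M_{z_2}T_{z_1}\psi$ is the Weyl symbol of a rank-one operator, with $L_\psi$ ``a rank-one projection-type operator built from $\psi$.'' For an \emph{arbitrary} window $\psi\in\mathcal S^{(1)}(\mathbb R^{2d})$ this is false: $L_\psi$ has no reason to be rank-one. The fix is to choose the window on $\mathbb R^{2d}$ to be itself a Wigner distribution, say $\Psi=W(\phi,\phi)=2^d R_\phi\phi$ for some $\phi\in\mathcal S^{(1)}(\mathbb R^d)$; then $L_\Psi=\langle\,\cdot\,,\phi\rangle\phi$ is genuinely rank-one, and the symplectic covariance of the Weyl calculus gives $\|L_{M_{\zeta}T_{z}\Psi}\|_{S_1}=\|\phi\|_{L^2}^2$ uniformly in $(z,\zeta)$. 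Since such $\Psi$ lies in $M^1(\mathbb R^{2d})$ and is an admissible window for the inversion formula of Proposition~\ref{emjedanve}, the Bochner-integral argument then goes through cleanly. With this adjustment your plan is complete and matches the proofs in the references the paper defers to.
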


\par

The Schatten-von Neumann properties in the following Theorem are formulated in the spirit of \cite{CG02},
see also \cite{Toft2,To8}. Note that more general weights are considered in \cite{Toft2007a,Toft2007b}, leading
to different type of results. We also refer to \cite{FG2006} for compactness properties obtained by a different approach.

\begin{theorem} \label{Schatten}
Let $ \masfR (\mathrm{p}) $ be the Young functional given by \eqref{R-functional},
$s \geq 0$ and $ t \geq d \masfR (\mathrm{p})$ with
$ t > d \masfR (\mathrm{p})$ when $  \masfR (\mathrm{p}) >0$.
\begin{enumerate}
\item If $ 1 \leq p \leq 2 $ and $ p \leq r \leq  2p/(2-p) $
then the mapping $ (a,\varphi_1,\varphi_2) \mapsto A _a ^{\varphi_1, \varphi_2} $ is bounded from
$ M^{r, q} _{-s,t} \times M^{1} _s \times M^{p} _s $,  into $ S_p$, that is
$$
\| A _a ^{\varphi_1, \varphi_2} \|_{S_p} \lesssim \| a \|_{M^{r,q} _{-s,t}}
\| \varphi_1 \|_{M^{1} _s}
\| \varphi_2 \|_{M^{p} _s}.
$$
\item
If $ 2 \leq p \leq \infty $  and $ p  \leq r $
then the mapping $ (a,\varphi_1,\varphi_2) \mapsto A _a ^{\varphi_1, \varphi_2} $ is bounded from
$ M^{r, q} _{-s,t} \times M^{1} _s \times M^{p'} _s $, into $ S_p$, that is
$$
\| A _a ^{\varphi_1, \varphi_2} \|_{S_p} \lesssim \| a \|_{M^{r,q} _{-s,t}}
\| \varphi_1 \|_{M^{1} _s}
\| \varphi_2 \|_{M^{p'} _s}.
$$
\end{enumerate}
\end{theorem}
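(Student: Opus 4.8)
The plan is to reduce Theorem~\ref{Schatten} to the combination of the Weyl-connection formula (Lemma~\ref{Weyl connection lemma}), the Grossmann-Royer/cross-Wigner continuity estimate on modulation spaces (Theorem~\ref{thm-cross-Wigner}), the sharp convolution estimate for modulation spaces (Theorem~\ref{mainconvolution}), and finally the Schatten-class criterion for Weyl operators in terms of the Weyl symbol (Theorem~\ref{sigma-Schatten}). The strategy is exactly the one used in Subsection~\ref{cont-prop} for the boundedness statement (Theorem~\ref{conv-a-cross-Wigner}): first obtain membership of $R_{\varphi_1}\varphi_2$ in a suitable modulation space, then convolve with the symbol $a$, then read off the Schatten norm of $L_\sigma$ with $\sigma = 2^{-d} a\ast R_{\varphi_1}\varphi_2$.

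First I would fix $\varphi_1 \in M^1_s(\mathbb{R}^d)$ and $\varphi_2 \in M^p_s(\mathbb{R}^d)$ (case (1)) and apply Theorem~\ref{thm-cross-Wigner} with the Lebesgue exponents $(p_1,q_1)=(1,1)$, $(p_2,q_2)=(p,p)$ and weight exponent $s\ge 0$. One checks that \eqref{uslov1} and \eqref{uslov2} are satisfied with the target space $M^{p,p}_{s,0}=M^p_{s,0}$; more precisely, I would choose the output pair $(\tilde p,\tilde q)$ as large as the hypothesis \eqref{uslov2} allows, namely $\tilde q = p$ and $1/\tilde p \le 1/p' $ (coming from $1/p_1+1/p_2 = 1 + 1/p \ge 1/\tilde p + 1/\tilde q$), to conclude $R_{\varphi_1}\varphi_2 \in M^{1,p'}_{-t_0,0}(\mathbb{R}^{2d})$ after the standard change of the sign of the weight via $\check{\;}$ and Lemma~\ref{GRandRelatives}, with norm controlled by $\|\varphi_1\|_{M^1_s}\|\varphi_2\|_{M^p_s}$. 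Then I would feed $h := R_{\varphi_1}\varphi_2 \in M^{1,p'}_{-t_0,0}$ and $f_1 := a \in M^{r,q}_{-s,t}$ into Theorem~\ref{mainconvolution}(2): writing the three triples $\mathrm p = (p_0,p_1,p_2)$ and $\mathrm q=(q_0,q_1,q_2)$ so that the first factor is $a$, the second is $h$, and the output is $\sigma$, the constraint $0\le\masfR(\mathrm p)\le 1/2$ together with $\masfR(\mathrm q)\le 1$ is precisely what the conditions $1\le p\le 2$, $p\le r\le 2p/(2-p)$ encode (the Lebesgue bookkeeping being the same as in the proof of Theorem~\ref{conv-a-cross-Wigner}), while \eqref{lastineq2A}--\eqref{lastineq2C} with the choice $s_0=s_1=s_2=0$ in the $\omega$-weight slot and $t$'s summing correctly give exactly $s\ge 0$ and $t\ge d\masfR(\mathrm p)$, with strict inequality when $\masfR(\mathrm p)>0$. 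The conclusion is $\sigma = 2^{-d} a\ast R_{\varphi_1}\varphi_2 \in M^{p}(\mathbb{R}^{2d})$ (resp.\ $M^{p,p'}$ in case (2)) with
$$
\|\sigma\|_{M^{p}} \lesssim \|a\|_{M^{r,q}_{-s,t}} \|\varphi_1\|_{M^1_s}\|\varphi_2\|_{M^p_s}.
$$
Finally Lemma~\ref{Weyl connection lemma} gives $A_a^{\varphi_1,\varphi_2} = 2^{-d}L_{a\ast R_{\varphi_1}\varphi_2} = L_\sigma$, and Theorem~\ref{sigma-Schatten}(2) yields $\|A_a^{\varphi_1,\varphi_2}\|_{S_p}\lesssim \|\sigma\|_{M^p}$, closing case (1). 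For case (2), $2\le p\le\infty$, $p\le r$, the only changes are: use $(p_2,q_2)=(p',p')$ in Theorem~\ref{thm-cross-Wigner}, so that $R_{\varphi_1}\varphi_2\in M^{1,p}_{-t_0,0}$; redo the convolution bookkeeping (now $\masfR(\mathrm p)$ lands in the regime making $\sigma\in M^{p,p'}$); and invoke Theorem~\ref{sigma-Schatten}(3) instead of (2).

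The step I expect to be the main obstacle is the exact matching of the three-parameter inequalities in Theorem~\ref{mainconvolution} (the conditions \eqref{lastineq2A}--\eqref{lastineq2C} and the constraints $\masfR(\mathrm p)\le 1/2$, $\masfR(\mathrm q)\le 1$) with the hypotheses stated in Theorem~\ref{Schatten}, keeping careful track of which Lebesgue exponent goes into which slot, of the weight signs flipped by $V^*_g$ and by the reflection $\check{\;}$, and of the two weight groups (the $x$-weight carrying the $t$'s, the $\omega$-weight carrying the $s$'s) separately. This is a bounded amount of elementary but error-prone computation; everything else is a direct citation of the earlier results. As in the rest of the paper I would only sketch this verification, referring to \cite{Teof2016} for the full details, and note that the Grossmann-Royer formalism shortens the algebra by replacing $W(\varphi_1,\varphi_2)$ with $2^d R_{\varphi_1}\varphi_2$ throughout.
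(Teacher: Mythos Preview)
Your strategy is exactly the paper's: apply Theorem~\ref{thm-cross-Wigner} to place $R_{\varphi_1}\varphi_2$ in a weighted $M^{1,\cdot}$ space, feed this together with $a$ into Theorem~\ref{mainconvolution} to obtain the Weyl symbol in $M^p$ (resp.\ $M^{p,p'}$), and conclude via Lemma~\ref{Weyl connection lemma} and Theorem~\ref{sigma-Schatten}.

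There is one concrete slip in the bookkeeping you yourself flag as error-prone: you have the second exponent of $R_{\varphi_1}\varphi_2$ swapped between the two cases. In case~(1) ($1\le p\le 2$, $\varphi_2\in M^p_s$) the correct target from Theorem~\ref{thm-cross-Wigner} is $R_{\varphi_1}\varphi_2\in M^{1,p}_{s,0}$ (indeed you compute $\tilde q=p$ and then write $M^{1,p'}$), because the $q$-triple in Theorem~\ref{mainconvolution} must be $(p',q,p)$ so that $\masfR(\mathrm q)=1-1/q\le 1$; with your $M^{1,p'}$ the $q$-triple becomes $(p',q,p')$ and $\masfR(\mathrm q)=2-2/p'-1/q$ fails for $p$ near $1$. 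Symmetrically, in case~(2) ($2\le p\le\infty$, $\varphi_2\in M^{p'}_s$) one needs $R_{\varphi_1}\varphi_2\in M^{1,p'}_{s,0}$, not $M^{1,p}$, so that the $q$-triple is $(p,q,p')$ and again $\masfR(\mathrm q)=1-1/q$. Once this swap is corrected, your argument matches the paper line by line.
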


\begin{proof} {\em 1.}
By Theorem \ref{thm-cross-Wigner} it follows that
$ R_{\varphi_1 } \varphi_2 \in M^{1, p_w} _{-t_0,0} (\mathbb{R}^{2d}), $
with $ t_0 \geq -s $ and $ p_w \in [2p/(p+2), p].$ Therefore
$ R_{\varphi_1 } \varphi_2   \in M^{1, p} _{s,0} (\mathbb{R}^{2d}). $

To apply Theorem \ref{mainconvolution} we notice that
for $ \masfR (\mathrm{p}) = (p', r, 1)$
the condition $\frac{1}{2} \geq  \masfR (\mathrm{p}) \geq 0$ is equivalent to
$ p \leq r \leq  2p/(2-p) $, and that for
$ \masfR (\mathrm{p}) = (p', q, p)$
the condition $ \masfR (\mathrm{q}) \leq 1$ is equivalent to
$ 1 \leq q $.
Now, Theorem \ref{mainconvolution} implies that
$ a *  R_{\varphi_1 } \varphi_2   \in M^{p} (\mathbb{R}^{2d})$,
and the result follows from Theorem \ref{sigma-Schatten} {\em 2.}

{\em 2.} By Theorem \ref{thm-cross-Wigner}  it follows that
$ R_{\varphi_1 } \varphi_2 \in M^{1, p_w} _{-t_0,0} (\mathbb{R}^{2d}), $
with $ t_0 \geq -s $ and $ p_w \in [p', 2p'/(p'+2), p].$ Therefore
$ R_{\varphi_1 } \varphi_2 \in M^{1, p'} _{s,0} (\mathbb{R}^{2d}). $

Now, for $ 2 \leq p \leq \infty $, $\frac{1}{2} \geq  \masfR (\mathrm{p}) \geq 0$ is equivalent to  $ p  \leq r $,
and  for $ \masfR (\mathrm{q}) = (p, q, p')$
the condition $ \masfR (\mathrm{q}) \leq 1$ is equivalent to $ 1 \leq q $.
The statement follows from
Theorem \ref{mainconvolution} {\em 2.} and Theorem \ref{sigma-Schatten} {\em 3.}
\qed
\end{proof}

We remark that we corrected a typo from the formulation of \cite[Theorem 3.9]{Teof2016}
also note that a particular choice: $ r = p $, $ q = \infty $ and $ t= 0 $ recovers
\cite[Theorem 3.4]{CG02}.

In the next theorem we give some necessary conditions. The proof follow from the proofs of Theorems 4.3 and 4.4 in
\cite{CG02} and is therefore omitted.

\begin{theorem} \label{necessary}
Let the assumptions of Theorem \ref{mainconvolution} hold
and let $ a \in \mathcal{S}' (\mathbb{R}^{2d}).$
\begin{enumerate}
\item
If there exists a constant $ C = C(a) > 0 $ depending only on the symbol $a$ such that
$$
\|A _a ^{\varphi_1, \varphi_2}\|_{S_\infty} \leq C \|\varphi_1\|_{M^{p_1} _{t_1}}
\|\varphi_2\|_ {M^{p_2} _{t_2}},
$$
for all $\varphi _1, \varphi _2  \in  \mathcal{S} (\mathbb{R}^{d}), $
then $a \in M^{\infty, r} _{u,v} (\mathbb{R}^{2d})$ where $ 1\leq r \leq p_0 $,
$u \geq t_0 $ and $ v \geq d \masfR (\mathrm{p}) $ with
$ v > d \masfR (\mathrm{p}) $ when $ \masfR (\mathrm{p}) > 0 $.
\item
If there exists a constant $ C = C(a) > 0 $ depending only on the symbol $a$ such that
$$
\| A _a ^{\varphi_1, \varphi_2} \|_{S_2} \leq C
\| \varphi_1 \|_{M^{1} }
\| \varphi_2 \|_{M^{1} }
$$
for all $\varphi _1, \varphi _2  \in  \mathcal{S} (\mathbb{R}^{d}), $
then $a \in M^{2, \infty}  (\mathbb{R}^{2d})$.
\end{enumerate}
\end{theorem}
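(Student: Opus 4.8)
The plan is to prove both statements by the testing method of \cite[Theorems 4.3 and 4.4]{CG02}: one feeds the assumed norm estimates for $A_a^{\varphi_1,\varphi_2}$ a family of time-frequency shifts of a fixed Gaussian window and reads the resulting matrix coefficients as (samples of) the short-time Fourier transform of $a$. The common starting point is the weak form of Definition \ref{locopdef}, which by Lemma \ref{locopsame} may be written as $\langle A_a^{\varphi_1,\varphi_2}f,g\rangle=\int_{\mathbb{R}^{2d}}a(z)\,V_{\varphi_1}f(z)\,\overline{V_{\varphi_2}g(z)}\,dz$. Fix a normalized Gaussian $\phi$ on $\mathbb{R}^d$. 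Inserting $f=M_{\omega_1}T_{x_1}\phi$, $g=M_{\omega_2}T_{x_2}\phi$, $\varphi_1=\varphi_2=M_{\eta_2}T_{\eta_1}\phi$, and using the covariance of the short-time Fourier transform, the integrand becomes $a(z)$ times the product of two shifted copies of $V_\phi\phi$ (a Gaussian on $\mathbb{R}^{2d}$, up to a unimodular factor) times a phase that is affine in $(x_1,\omega_1,x_2,\omega_2,\eta_1,\eta_2)$; hence the integral equals, up to a nonzero constant, $V_\Psi a$ evaluated at a point of $\mathbb{R}^{4d}$ which is an affine surjective function of those parameters, for a fixed Gaussian window $\Psi$ on $\mathbb{R}^{2d}$. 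This identity is the bridge converting operator bounds into pointwise bounds on $V_\Psi a$.

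For part (2) I would take the shift $(\eta_1,\eta_2)$ of $\varphi_1=\varphi_2$ arbitrary but note $\|M_{\eta_2}T_{\eta_1}\phi\|_{M^1}=\|\phi\|_{M^1}$ is independent of it, so the hypothesis gives a bound on $\|A_a^{\varphi_1,\varphi_2}\|_{S_2}$ that is uniform in $(\eta_1,\eta_2)$. Choosing a lattice $\Lambda\subset\mathbb{R}^{2d}$ fine enough that $\{\pi_\lambda\phi\}_{\lambda\in\Lambda}$ (with $\pi_\lambda$ the corresponding time-frequency shift) is a tight Gabor frame for $L^2(\mathbb{R}^d)$, one has the standard equivalence $\|A_a^{\varphi_1,\varphi_2}\|_{S_2}^2\asymp\sum_{\lambda,\mu\in\Lambda}|\langle A_a^{\varphi_1,\varphi_2}\pi_\lambda\phi,\pi_\mu\phi\rangle|^2$. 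Reindexing $(\lambda,\mu)$ by midpoint and difference and applying the bridge, the right-hand side becomes $\asymp\sum_{\nu,\rho}|V_\Psi a(\nu,c\rho)|^2$ up to an affine change of variables, with $\nu$ ranging over a lattice in the phase space where $a$ lives and $c\rho$ over a lattice in the frequency variable of $a$. Letting the common shift of $\varphi_1=\varphi_2$ sweep $\mathbb{R}^{2d}$ translates the $\nu$-lattice arbitrarily; the discretization of modulation spaces (see \cite[Ch.~12]{Gro}) then turns the uniform bound on these $\ell^2$-sums into $\sup_{\xi}\|V_\Psi a(\cdot,\xi)\|_{L^2(\mathbb{R}^{2d})}<\infty$, that is $a\in M^{2,\infty}(\mathbb{R}^{2d})$.

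For part (1) the same bridge applies, but now the $S_\infty$-norm only bounds single matrix coefficients, and a single coherent state as window is too crude --- it yields only $a\in M^\infty(\mathbb{R}^{2d})$. To capture the sharp exponent $r\le p_0$ and the frequency weight $v$ I would, following \cite{CG02}, replace the Gaussian window by finite superpositions $\varphi^{(N)}=\sum_{|k|\le N}\pi_{\beta k}\phi$ over an auxiliary lattice, whose norm $\|\varphi^{(N)}\|_{M^{p_j}_{t_j}}$ grows like $N^{2d/p_j+t_j}$ by the atomic decomposition of modulation spaces; expanding $\langle A_a^{\varphi^{(N)},\varphi^{(N)}}\pi_\lambda\phi,\pi_\mu\phi\rangle$ by the bridge produces weighted sums of samples of $V_\Psi a$, and balancing these against the growth $N^{2d(1/p_1+1/p_2)+t_1+t_2}$ of the hypothesis (in which the quantity $\masfR(\mathrm{p})$ appears through $1/p_1+1/p_2$ and $p_0$) extracts exactly the $\ell^r$-integrability in the frequency variable. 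Equivalently, one can pass through the Weyl connection $A_a^{\varphi_1,\varphi_2}=2^{-d}L_{a*R_{\varphi_1}\varphi_2}$ of Lemma \ref{Weyl connection lemma}: the hypothesis then reads $\|a*R_{\varphi_1}\varphi_2\|_{M^\infty}\lesssim\|\varphi_1\|_{M^{p_1}_{t_1}}\|\varphi_2\|_{M^{p_2}_{t_2}}$ for all windows --- using the elementary fact, immediate from Lemma \ref{Weyl psido char} and testing on time-frequency shifts of a Gaussian, that a Weyl operator bounded on $L^2$ has symbol in $M^\infty$ --- and, since by the converse part of Theorem \ref{thm-cross-Wigner} the Grossmann--Royer transforms $R_{\varphi_1}\varphi_2$ exhaust (a ball in) the space $M^{1,p_0'}_{-t_0,0}(\mathbb{R}^{2d})$, this forces $a$ into the asserted space $M^{\infty,r}_{u,v}(\mathbb{R}^{2d})$; the constraints $u\ge t_0$, $v\ge d\,\masfR(\mathrm{p})$ (strict when $\masfR(\mathrm{p})>0$) are then produced by exactly the scaling (Knapp-type) examples underlying Theorem \ref{otpmimality1}.

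The main obstacle is the last step of part (1): obtaining the \emph{sharp} Lebesgue exponent $r$ and the \emph{sharp} frequency weight $v$, rather than merely $a\in M^\infty$. The crude coherent-state test destroys precisely this information, and recovering it calls for a finely tuned family of test windows together with careful accounting of how their $M^{p_j}_{t_j}$-norms scale and of the passage from discrete Gabor samples to the continuous mixed-norm of $V_\Psi a$ --- this bookkeeping is the technical core of \cite[Theorems 4.3 and 4.4]{CG02}, which is why only the statement is recorded here and the details are left to that reference.
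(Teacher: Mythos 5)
The paper itself gives no argument here: Theorem \ref{necessary} is stated with the remark that the proof follows from the proofs of Theorems 4.3 and 4.4 in \cite{CG02} and is omitted, so your decision to defer the technical core to that reference is, in itself, consistent with the paper. The problem is that the sketch you do supply for part (2) is flawed. Write $\pi_z=M_\omega T_x$ for $z=(x,\omega)$ and take $\varphi_1=\varphi_2=\pi_\eta\phi$ as you propose. Completing the square in the product of the two Gaussian short-time Fourier transforms shows that, in modulus, $\langle A_a^{\varphi_1,\varphi_2}\pi_\lambda\phi,\pi_\mu\phi\rangle$ equals $e^{-\pi|\lambda-\mu|^2/4}\,\bigl|V_\Psi a\bigl(\tfrac{\lambda+\mu}{2}-\eta,\,J(\lambda-\mu)\bigr)\bigr|$ for a fixed Gaussian $\Psi$ and a fixed rotation $J$. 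Two consequences: first, your claimed equivalence $\|A\|_{S_2}^2\asymp\sum_{\nu,\rho}|V_\Psi a(\nu,c\rho)|^2$ is false, because the sum carries the weight $e^{-c|\rho|^2}$; the uniform hypothesis therefore only yields $\|V_\Psi a(\cdot,\zeta)\|\lesssim e^{c|\zeta|^2}$, which is far from $a\in M^{2,\infty}$. Second, with equal windows the frequency variable of $V_\Psi a$ is tied to the frame-index difference $\lambda-\mu$, so sweeping the common shift $\eta$ only recovers the time variable. The correct route (and the one behind \cite[Theorem 4.4]{CG02}) decouples the two: by Lemma \ref{Weyl connection lemma} and the unitarity of the Weyl transform, $\|A_a^{\varphi_1,\varphi_2}\|_{S_2}=2^{-d}\|a* R_{\varphi_1}\varphi_2\|_{L^2}$; choosing $\varphi_1=\pi_{z_1}\phi$, $\varphi_2=\pi_{z_2}\phi$ with \emph{different} shifts makes $R_{\varphi_1}\varphi_2$, up to a unimodular constant, equal to $M_\zeta T_m\Phi$ with unit amplitude, $\Phi$ a fixed Gaussian on $\mathbb{R}^{2d}$ and $\zeta$ proportional to $J(z_1-z_2)$ arbitrary, whence $\|a* M_\zeta T_m\Phi\|_{L^2}=\|V_\Phi a(\cdot,\zeta)\|_{L^2}\le C\|\phi\|_{M^1}^2$ uniformly in $\zeta$, which is exactly $a\in M^{2,\infty}$.

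For part (1), your alternative argument is also not sound as stated. The set $\{R_{\varphi_1}\varphi_2\}$, as $\varphi_1,\varphi_2$ range over bounded sets of $M^{p_1}_{t_1}$ and $M^{p_2}_{t_2}$, does not exhaust a ball of $M^{1,p_0'}_{-t_0,0}(\mathbb{R}^{2d})$: Grossmann--Royer (cross-Wigner) transforms form a thin nonlinear subset, and the converse part of Theorem \ref{thm-cross-Wigner} asserts necessity of index conditions, not any surjectivity, so no duality argument can push $a$ into $M^{\infty,r}_{u,v}$ from the bound $\|a* R_{\varphi_1}\varphi_2\|_{M^\infty}\lesssim\|\varphi_1\|_{M^{p_1}_{t_1}}\|\varphi_2\|_{M^{p_2}_{t_2}}$; similarly, Theorem \ref{otpmimality1} concerns necessity for the convolution map, not for the present operator estimate, so invoking its scaling examples does not by itself produce the weight conditions on $u$ and $v$. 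What remains of your part (1) is the correct general strategy (testing on superpositions of time-frequency shifts whose $M^{p_j}_{t_j}$-norms are tracked, as in \cite[Theorem 4.3]{CG02}) plus an explicit deferral of the bookkeeping to that reference --- which matches what the paper does, but the two heuristic shortcuts you offer in place of that bookkeeping do not work.
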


We finish the section with results from \cite{CPRT1} related to the weights which may have exponential growth.
To that end we use the boundedness result Theorem \ref{thm-cross-Wigner} formulated in terms of such weights.
The proof is a modified version of the proof of \cite[Proposition 2.5]{CG02} and therefore omitted.

\begin{lemma}\label{wigestp} Let $w_s,\tau_s$ be the weights defined in \eqref{eqc1} and \eqref{eqc2}. If $1\leq p\leq\infty$, $s\geq 0$, $\f_1\in M^{1}_{w_s}(\Ren)$ and
$\f_2\in M^{p}_{w_s}(\Ren)$, then  $R_{\f_1} \f_2\in
M^{1,p}_{\tau_s}(\Renn)$, and
\begin{equation}\label{wigest}
\| R_{\f_1} \f_2\|_{M^{1,p}_{\tau_s}}\lesssim
\|\f_1\|_{M^{1}_{w_s}}\|\f_2\|_{ M^{p}_{w_s}}.
\end{equation}
\end{lemma}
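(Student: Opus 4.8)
The plan is to reduce this lemma to the already-quoted Theorem~\ref{thm-cross-Wigner}, exactly as the paper signals when it says the proof is ``a modified version of the proof of \cite[Proposition 2.5]{CG02}''. The key point is that $R_{\varphi_1}\varphi_2$ is, up to the elementary change of variables and dilation of Lemma~\ref{GRandRelatives}, the same object as $V_{\varphi_1}\varphi_2$ (equivalently $W(\varphi_2,\varphi_1)$), so the boundedness statement is really a statement about the cross-ambiguity/cross-Wigner map into a weighted mixed-norm modulation space, and it is the weighted analogue of \eqref{cross-Wig-mod}. First I would record, using Lemma~\ref{GRandRelatives}, that $R_{\varphi_1}\varphi_2(x,\omega)=A(\varphi_2,\varphi_1^{\,\check{}})(2x,2\omega)=e^{\pi i x\omega}V_{\varphi_1^{\,\check{}}}\varphi_2(2x,2\omega)$ or the corresponding identity in terms of $V$; modulation by the chirp $e^{\pi i x\omega}$ and the linear dilation $(x,\omega)\mapsto(2x,2\omega)$ are both bounded bijections on every $M^{p,q}_m$ with $m\in\cM_v$, and they transform the weight $\tau_s(x,\omega)=e^{s\|\omega\|}$ into an equivalent weight of the same type (up to a constant factor in $s$). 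Hence it suffices to prove the estimate for $V_{\varphi_1}\varphi_2$ (or $W(\varphi_1,\varphi_2)$) in $M^{1,p}$ with the $\omega$-weight $e^{s\|\omega\|}$.

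Next I would choose Lebesgue and weight parameters in Theorem~\ref{thm-cross-Wigner} to match the present situation: take $p_1=q_1=1$, $p_2=p$, $q_2=p$, so that $p_i=1\le 1=:P$ and $q_i=p\le p=:Q$ are compatible with \eqref{uslov1}, and check \eqref{uslov2}, namely $\min\{1+1/p,\,1+1/p\}=1+1/p\ge 1/P+1/Q=1+1/p$, which holds with equality. Then Theorem~\ref{thm-cross-Wigner} gives $R_{\varphi_1}\varphi_2\in M^{1,p}_{s,0}(\Renn)$ with $\|R_{\varphi_1}\varphi_2\|_{M^{1,p}_{s,0}}\lesssim\|\varphi_1\|_{M^{1}_{|s|}}\|\varphi_2\|_{M^{p}_{s}}$, which is the polynomial-weight version. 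To upgrade the polynomial weight $\langle\cdot\rangle^s$ to the exponential weight in \eqref{wigest}, I would either invoke the exponential-weight reformulation of Theorem~\ref{thm-cross-Wigner} that the paper alludes to in Subsection~\ref{cont-prop} (the same proof in \cite{CN} works for $v$-moderate weights $m\in\cM_v$, and $w_s$, $\tau_s$ are such), or argue directly: by \eqref{tau} we read $\tau_s$ on $\mathbb{R}^{4d}$ as $w_s$ in the frequency variable, and the covariance property Proposition~\ref{svojstva}~\emph{3.} together with the submultiplicativity $w_s(z_1+z_2)\le w_s(z_1)w_s(z_2)$ lets one transfer the weight onto the two windows, reducing $\|R_{\varphi_1}\varphi_2\|_{M^{1,p}_{\tau_s}}$ to $\|R_{\varphi_1}\varphi_2\|_{M^{1,p}}$ tested against the $w_s$-weighted windows $\varphi_1\in M^1_{w_s}$, $\varphi_2\in M^p_{w_s}$.

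Concretely, the mechanism is the standard one from \cite[Proposition 2.5]{CG02}: one expands the STFT of $R_{\varphi_1}\varphi_2$ with a Gaussian-type window $\Phi=R_{g}g$ (or $\varphi\otimes\varphi$), uses the pointwise factorization $|V_\Phi(R_{\varphi_1}\varphi_2)(z,\zeta)|\lesssim (|V_g\varphi_1|\ast|V_g\varphi_2|)$-type inequality (the ``magic'' STFT identity for the Wigner/Grossmann--Royer transform), and then the weight $\tau_s(z,\zeta)=w_s(\zeta)$ is absorbed by submultiplicativity into the two convolved factors carrying $w_s$; the $L^{1,p}$ norm in $(z,\zeta)$ then factors through Young's inequality into $\|V_g\varphi_1\|_{L^1_{w_s}}\|V_g\varphi_2\|_{L^{1,p}_{w_s}}\asymp\|\varphi_1\|_{M^1_{w_s}}\|\varphi_2\|_{M^p_{w_s}}$, using that $g\in\cS^{(1)}$ so that $V_g g$ has Gaussian decay (Theorem~\ref{nec-suf-cond}) and the window-change lemma Proposition~\ref{emjedanve}~\emph{3.}--\emph{4.} makes the choice of $g$ irrelevant. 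The density of $\cS^{(1)}$ in the relevant modulation spaces (Proposition~\ref{emjedanve}) then extends the estimate from Schwartz (or Gelfand--Shilov) windows to all $\varphi_1\in M^1_{w_s}$, $\varphi_2\in M^p_{w_s}$.

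The main obstacle I anticipate is purely bookkeeping rather than conceptual: one must be careful that the dilation $(x,\omega)\mapsto(2x,2\omega)$ and the chirp in Lemma~\ref{GRandRelatives} really do preserve the \emph{mixed}-norm $M^{1,p}$ structure and map $\tau_s$ to an equivalent weight --- the dilation mixes no variables so this is fine, but it changes the constant $s$ and the implied constants, which is harmless. The only genuinely substantive input is that Theorem~\ref{thm-cross-Wigner} (from \cite{CN}) is valid for exponential $v$-moderate weights and not merely polynomial ones; since the paper already uses exactly this in the proof of Theorem~\ref{conv-a-cross-Wigner} and states Lemma~\ref{wigestp} is ``a modified version of \cite[Proposition 2.5]{CG02}'', I would simply cite that and not reprove the weighted cross-Wigner estimate from scratch.
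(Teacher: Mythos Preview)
Your third paragraph is the actual proof the paper intends: the paper omits the argument entirely and simply cites \cite[Proposition~2.5]{CG02}, whose mechanism is precisely the ``magic formula'' $|V_{\Phi}(R_{\varphi_1}\varphi_2)(z,\zeta)| = |V_g\varphi_2(z+J\zeta/2)|\,|V_g\varphi_1(z-J\zeta/2)|$ (a pointwise \emph{product}, not a convolution---your phrasing slips here), followed by integration in $z$ to produce a genuine convolution, submultiplicativity of $w_s$ to distribute the weight $\tau_s(z,\zeta)=w_s(\zeta)$, and Young's inequality $L^1_{w_s}*L^p_{w_s}\hookrightarrow L^p_{w_s}$. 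That yields $\|\varphi_1\|_{M^1_{w_s}}\|\varphi_2\|_{M^p_{w_s}}$ directly (note: $\|V_g\varphi_2\|_{L^p_{w_s}}$, not $L^{1,p}_{w_s}$, is what appears on the right).

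Your first two paragraphs, however, are a detour that does not close. Theorem~\ref{thm-cross-Wigner} as stated in the paper is only for the polynomial weights $\langle\cdot\rangle^s$; invoking it and then ``upgrading'' to exponential weights by asserting that \cite{CN} ``works for $v$-moderate weights'' is not justified here, and the paper does not claim it. The dilation/chirp reductions via Lemma~\ref{GRandRelatives} are fine but unnecessary once you use the CG02 identity directly. So drop the Theorem~\ref{thm-cross-Wigner} route and present only the direct argument; that is both correct and exactly what the paper has in mind.
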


\begin{theorem}\label{tempbound}
Let  $A _a ^{\varphi_1, \varphi_2}$ be the localization operator with the symbol $a$ and windows $\f_1$ and $\f_2$.
\begin{enumerate}
\item If $s\geq 0$, $a\in  M^{\infty}_{1/\tau_s}(\Renn)$, and $\f_1,\f_2\in
M^{1}_{w_s}(\Ren)$, then $\gaw$ is bounded  on  $M^{p,q}(\Ren)$ for all
$1\leq p,q\leq\infty$, and the operator norm satisfies  the
uniform estimate
$$\|\gaw\|_{op}\lesssim  \|a\|_{M^{\infty}_{1/\tau_s}}\|\f_1\|_
{M^{1}_{w_s}}\|\f_2\|_{ M^{1}_{w_s}}.$$
\item
If $1 \leq p  \leq 2$, then  the mapping $(a,\f _1, \f _2) \mapsto
A _a ^{\varphi_1, \varphi_2} $ is  bounded  from
$M^{p,\infty }_{1/\tau _s}(\rdd ) \times M^1_{w_s} (\rd )\times
M^p_{w_s} (\rd )$ into $S_p$, in other words,
$$
\|\gaw\|_{S_p}\lesssim
\|a\|_{M^{p,\infty}_{1/\tau _s}}\|\f_1\|_{M^1_{w_s}}\|\f_2\|_{M^p_{w_s}}\,.
$$
\item  If $2 \leq p  \leq \infty$, then the mapping $(a,\f _1, \f _2) \mapsto
A _a ^{\varphi_1, \varphi_2} $ is  bounded  from
$M^{p,\infty }_{1/\tau _s} \times M^1_{w_s}\times M^{p'}_{w_s}$ into
$S_p$, and
$$\|\gaw\|_{S_p}\lesssim
\|a\|_{M^{p,\infty}_{1/\tau _s}}\|\f_1\|_{M^1_{w_s}}\|\f_2\|_{M^{p'}_{w_s}}\,
.$$
\end{enumerate}
\end{theorem}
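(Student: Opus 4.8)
The strategy mirrors that of Theorem \ref{conv-a-cross-Wigner} and Theorem \ref{Schatten}, but with the polynomial weights replaced by the exponential weights $w_s$ and $\tau_s$ of \eqref{eqc1}--\eqref{eqc2}, exploiting the Weyl connection (Lemma \ref{Weyl connection lemma}) together with the exponential-weight Wigner estimate of Lemma \ref{wigestp}. Throughout I use that $\tau_s(z,\zeta)=w_s(\zeta)$ via \eqref{tau}, and that both $w_s$ and $\tau_s$ are submultiplicative, so the convolution estimate of Proposition \ref{mconvmp} applies to them with the appropriate choices of weight factors $m_1\otimes\nu$ and $v_1\otimes v_2\nu^{-1}$.

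First, for all three parts: by Lemma \ref{Weyl connection lemma}, $A_a^{\varphi_1,\varphi_2}=2^{-d}L_\sigma$ with Weyl symbol $\sigma=a\ast R_{\varphi_1}\varphi_2$. By Lemma \ref{wigestp}, if $\varphi_1\in M^1_{w_s}(\Ren)$ and $\varphi_2\in M^{p}_{w_s}(\Ren)$ (resp.\ $\varphi_2\in M^{p'}_{w_s}$ in parts (1) and (3)) then $R_{\varphi_1}\varphi_2\in M^{1,p}_{\tau_s}(\Renn)$ (resp.\ $M^{1,p'}_{\tau_s}$ or, for part (1), taking $p=1$, $M^{1,1}_{\tau_s}=M^1_{\tau_s}$), with the corresponding norm bound. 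The point is then a convolution estimate: $a\in M^{\infty}_{1/\tau_s}(\Renn)$ convolved with something in $M^{1,r}_{\tau_s}(\Renn)$ lands in an unweighted modulation space $M^{r,1}(\Renn)$ or, for the Schatten cases, in $M^p(\Renn)$. Here one applies Proposition \ref{mconvmp} on $\rdd$ (so the ambient dimension there is $2d$), with $v$ replaced by the submultiplicative weight $\tau_s$ and $m\equiv1$: since $1/\tau_s$ is $\tau_s$-moderate and the weight on the second factor is exactly $\tau_s=v$ in the notation of that proposition, the weight hypotheses are met, and the Lebesgue exponents are arranged by picking $p,q,r,s,t$ appropriately. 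For part (1) one chooses exponents so the output is $M^{\infty,1}(\Renn)$ and invokes \cite[Theorem 14.5.2]{Gro} to get boundedness of $L_\sigma$ on every $M^{p,q}(\Ren)$; for parts (2) and (3) one chooses exponents so the output is $M^{p}(\Renn)$ (resp.\ $M^{p,p'}(\Renn)$) and invokes Theorem \ref{sigma-Schatten} (2) (resp.\ (3)) to conclude $L_\sigma\in S_p$. In each case the norm estimate is the product of the three norm estimates along the chain, yielding exactly the displayed uniform bounds.

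Concretely, for part (2) with $1\le p\le2$: Lemma \ref{wigestp} gives $R_{\varphi_1}\varphi_2\in M^{1,p}_{\tau_s}$; then $a\in M^{\infty,\infty}_{1/\tau_s}=M^\infty_{1/\tau_s}$ convolved with $M^{1,p}_{\tau_s}$ lies in $M^{p,p}=M^p$ by Proposition \ref{mconvmp} (take the Lebesgue data with $\tfrac1\infty+\tfrac11-1=0=\tfrac1{?}$—one checks the inner exponents give $r=p$ on the first slot and $p$ on the second after the $s,t,t'$ bookkeeping, using $\nu\equiv1$ and the moderateness $1/\tau_s\lesssim\tau_s$). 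Then Theorem \ref{sigma-Schatten} (2) gives $\|L_\sigma\|_{S_p}\lesssim\|\sigma\|_{M^p}\lesssim\|a\|_{M^{p,\infty}_{1/\tau_s}}\|\varphi_1\|_{M^1_{w_s}}\|\varphi_2\|_{M^p_{w_s}}$; the factor $2^{-d}$ is harmless. Part (3) with $2\le p\le\infty$ is identical with $\varphi_2\in M^{p'}_{w_s}$, $R_{\varphi_1}\varphi_2\in M^{1,p'}_{\tau_s}$, output $M^{p,p'}$, and Theorem \ref{sigma-Schatten} (3). Part (1) is the special case $p=1$ of the scheme together with the observation that $M^\infty_{1/\tau_s}\ast M^1_{\tau_s}\hookrightarrow M^{\infty,1}$ (again Proposition \ref{mconvmp}), whence \cite[Theorem 14.5.2]{Gro} applies.

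\textbf{Main obstacle.} The delicate point is the precise bookkeeping of the mixed-norm Lebesgue exponents and the weight factors in Proposition \ref{mconvmp} on $\Renn$: one must verify that the weight $1/\tau_s$ on the symbol slot, combined with the $\tau_s$-weight coming out of Lemma \ref{wigestp}, telescopes against $v=\tau_s$ in the right-hand factor $M^{q,st'}_{v_1\otimes v_2\nu^{-1}}$, which forces a specific choice of the auxiliary weight $\nu$ on $\Ren$ and of the split between the two coordinate groups $x$ and $\omega$. Everything else is a direct concatenation of Lemma \ref{wigestp}, the convolution estimate, and the known Weyl-symbol-to-Schatten bounds (Theorem \ref{sigma-Schatten}, \cite[Theorem 14.5.2]{Gro}); since these results are quoted from \cite{CG02, CPRT1}, I would simply indicate the chain and refer to \cite{CPRT1} for the verification of the weight arithmetic, as the paper does for its other quoted results.
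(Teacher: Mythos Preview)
Your proposal follows exactly the same route as the paper's proof: write $A_a^{\varphi_1,\varphi_2}=2^{-d}L_\sigma$ with $\sigma=a\ast R_{\varphi_1}\varphi_2$ via Lemma \ref{Weyl connection lemma}, estimate $R_{\varphi_1}\varphi_2$ by Lemma \ref{wigestp}, feed both into the convolution estimate Proposition \ref{mconvmp} (with $m\equiv 1$, $\nu=1/w_s$, so that the second factor weight becomes $v_1\otimes v_2\nu^{-1}=1\otimes w_s=\tau_s$), and conclude with \cite[Theorem 14.5.2]{Gro} for part (1) and Theorem \ref{sigma-Schatten} for parts (2)--(3). Two small remarks: in your concrete discussion of part (2) you wrote $a\in M^{\infty,\infty}_{1/\tau_s}$ when the hypothesis is $a\in M^{p,\infty}_{1/\tau_s}$ (your final norm estimate is nonetheless the correct one); and the paper's own proof cites ``Theorem \ref{sigma-Schatten} {\it a})'' for part (1), which is a slip---your reference to \cite[Theorem 14.5.2]{Gro} is the right one.
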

\begin{proof}
{\em 1.} We use the convolution relation \eqref{mconvm} to show that the Weyl
symbol $a \ast  R_{\f_1} \f_2$ of $\gaw $ is in $M^{\infty ,1}$.
 If $\f_1,\f_2\in  M^{1}_{w_s}(\Ren)$, then by  \eqref{wigest}, we
 have $ R_{\f_1} \f_2\in M^{1}_{\tau_s}(\Renn)$.
Applying  Proposition \ref{mconvmp} in the form $M^{\infty} _{1/\tau
  _{s}} \ast M^1_{\tau _s} \subseteq M^{\infty , 1}$,  we obtain
$\sigma=a\ast  R_{\f_1} \f_2\in M^{\infty, 1}$. The result now
follows from  Theorem \ref{sigma-Schatten}  {\it a}).

Similarly, the proof of {\em 2.} and {\em 3.} is based on results of  Proposition \ref{mconvmp} and Theorem \ref{sigma-Schatten}, items b) - d).
\end{proof}
For the sake of completeness, we state the necessary boundedness result, which follows by
straightforward modifications of \cite[Theorem 4.3]{CG02}.

\begin{theorem}
\label{necfr}
Let $a \in \cS^{(1) '} (\Renn )$ and $s\geq 0$.  If there exists a constant
$C=C(a)>0$ depending only on $a$ such that
\begin{equation}
  \label{eqr3}
  \|\gaw\|_{S_\infty}\leq  C\, \|\f_1\|_{ M^1_{w_s}}\|\f_2\|_{
    M^1_{w_s}}
\end{equation}
 for all $ \f _1, \f _2 \in \cS^{(1) } (\Ren   )$,  then $a\in M^\infty _{1/
   \tau_s}$.
\end{theorem}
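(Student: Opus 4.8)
The plan is to mimic the proof of \cite[Theorem 4.3]{CG02}, transferring it into the Grossmann-Royer formalism and into the exponential-weight setting, using the Weyl connection from Lemma \ref{Weyl connection lemma} as the bridge. First I would recall that, by Lemma \ref{Weyl connection lemma}, the hypothesis \eqref{eqr3} says that the family of Weyl operators $2^{-d} L_{a * R_{\varphi_1}\varphi_2}$ is uniformly bounded on $L^2(\mathbb{R}^d)$ (since $S_\infty = B(L^2)$) by $C\|\varphi_1\|_{M^1_{w_s}}\|\varphi_2\|_{M^1_{w_s}}$. The goal $a \in M^\infty_{1/\tau_s}(\Renn)$ means that $\sup_{z,\zeta} |V_\Phi a(z,\zeta)|\, \tau_s(\zeta)^{-1} < \infty$ for some (hence any) nonzero window $\Phi \in \cS^{(1)}(\Renn)$, and in view of \eqref{tau} this is the estimate $|V_\Phi a(z,\zeta)| \lesssim w_s(\zeta)$.

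The heart of the argument is to test the boundedness hypothesis on time-frequency shifts of the windows. Concretely, I would choose $\varphi_1, \varphi_2$ to be suitable shifts $\pi(w)\gamma$ of a fixed Gaussian-type $\gamma \in \cS^{(1)}(\mathbb{R}^d)$ (where $\pi(w) = M_\eta T_y$ for $w = (y,\eta)$); then $R_{\varphi_1}\varphi_2$ is, by the covariance property Proposition \ref{svojstva} {\em 3.}, a phase-space translate of the fixed function $R_\gamma\gamma$, which is a Gaussian on $\Renn$. Consequently $a * R_{\varphi_1}\varphi_2$ evaluated against the reproducing formula reconstructs, up to controllable factors, the STFT of $a$ with respect to the Gaussian window $\Phi = R_\gamma\gamma$. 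The key quantitative input is that the operator norm on $L^2$ of a Weyl operator dominates suitable averages of the Weyl symbol's STFT; combined with the uniform bound \eqref{eqr3} — in which $\|\pi(w)\gamma\|_{M^1_{w_s}} \asymp w_s(w)$ because $M^1_{w_s}$ is isometrically (up to constants) invariant under $\pi(w)$ with weight factor $w_s(w)$ — one reads off $|V_\Phi a(z,\zeta)| \lesssim w_s(\zeta)$, i.e. $a \in M^\infty_{1/\tau_s}$ by \eqref{tau}. Throughout I would use Lemma \ref{GRandRelatives} to pass freely between $R_g f$, $V_g f$ and $W(f,g)$, and Lemma \ref{Weyl psido char} (the identity $\langle L_\sigma f, g\rangle = 2^d\langle \sigma, R_f g\rangle$) to express matrix coefficients of $L_\sigma$ as pairings of $\sigma$ with Grossmann-Royer transforms.

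The main obstacle I expect is the book-keeping of the exponential weight through these manipulations: one must verify that the translation $w \mapsto \pi(w)\gamma$ costs exactly the factor $w_s(w)$ in the $M^1_{w_s}$-norm (this uses $w_s \in \cM_{w_s}$ and the standard STFT covariance, cf. \cite{Gro}), that the Gaussian $R_\gamma\gamma$ genuinely lies in $\cS^{(1)}(\Renn) \subset M^1_v$ so it is an admissible window by Proposition \ref{emjedanve} {\em 4.}, and that the convolution $a * R_{\varphi_1}\varphi_2$ may be legitimately unwound to recover $V_{\Phi}a$ — the latter being a distributional computation that is meaningful because $a \in \cS^{(1)'}(\Renn)$ and the Gaussian is in $\cS^{(1)}$. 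Since \cite[Theorem 4.3]{CG02} carries out precisely this scheme for polynomial weights, and the only change here is replacing polynomial moderate weights by the exponential ones $w_s, \tau_s$ (which are still in the admissible class $\cM_v$ for an appropriate submultiplicative $v$), the modifications are routine; accordingly I would, as the theorem statement already signals, omit the detailed computation and simply indicate that it follows by straightforward modifications of \cite[Theorem 4.3]{CG02}, with Lemma \ref{Weyl connection lemma}, Lemma \ref{wigestp} and \eqref{tau} supplying the needed dictionary between localization operators, their Weyl symbols, and the exponentially weighted modulation-space norms.
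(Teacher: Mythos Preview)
Your proposal is correct and matches the paper's own treatment: the paper does not give a detailed proof either, stating only that the result ``follows by straightforward modifications of \cite[Theorem 4.3]{CG02}.'' Your outline of those modifications---testing on time-frequency shifts of a Gaussian, using the Weyl connection (Lemma \ref{Weyl connection lemma}) and covariance (Proposition \ref{svojstva} {\em 3.}), and tracking the exponential weight via $\|\pi(w)\gamma\|_{M^1_{w_s}} \asymp w_s(w)$ and \eqref{tau}---is exactly the intended route.
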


Note that a trace-class result for certain quasianalytic distributions
(based on the heat kernel and parametrix techniques)  is given in \cite{CPRT1}.

\section{Further extensions} \label{extensions}

The results of previous sections can be extended in several directions.
We mention here some references to product formulas,
Shubin type pseudodifferential operators, multilinear localization operators,
and generalizations to quasi-Banach spaces and quasianalytic spaces of test functions and their dual spaces (of Fourier ultra-hyperfunctions). The list of topics and references is certainly incomplete and we leave it to the reader to accomplish it according to his/hers preferences.

The product of two localization operators can be written as a localization operator in very few cases. Exact formulas hold only in special cases, e.g. when the windows are Gaussians, \cite{Wong02}. Therefore, a symbolic calculus is developed in \cite{CorGro2006}
where such product is written as a sum of localization operators plus a remainder term expressed in a Weyl operator form. This is done in linear case in the framework of the symbols which may have subexponential growth.

The multilinear localization operators were first introduced in \cite{CKasso}
and their continuity properties are formulated in terms of modulation spaces. The
key point is the interpretation of these operators as multilinear Kohn-Nirenberg pseudodifferential operators.
The  multilinear pseudodifferential operators were already studied in the context of modulation spaces in \cite{BGHKasso,BKasso2004, BKasso2006},
see also a more recent contribution \cite{MOPf} where such approach is strengthened and applied to the bilinear and trilinear Hilbert transforms.

To deal with multilinear localization operators,
instead of standard modulation spaces $M^{p,q}$ observed in \cite{CKasso},
continuity properties in \cite{Teof2018,Teof2019} are formulated in terms of a modified version of modulation spaces denoted by $\mathcal{M}^{p,q} _{s,t}$, and given as follows.

By a slight abuse of the notation (as it is done in e.g. \cite{MOPf}),
$ \vec{f} $  denotes both the vector $ \vec{f} = (f_1,f_2, \dots, f_n) $ and the tensor product
$ \vec{f} = f_1 \otimes f_2 \otimes \dots \otimes f_n $. This will not cause confusion, since
the meaning of $ \vec{f} $  will be clear from the context.

For example, if $t=(t_1, t_2, \dots, t_n) $,  and $F_j = F_j (t_j),$ $t_j \in \mathbb{R}^d $,   $ j = 1,2,\dots,n$,
then
\begin{equation} \label{product}
\prod_{j=1} ^{n} F_j (t_j)= F_1 (t_1) \cdot  F_2 (t_2) \cdot \dots \cdot F_n (t_n) = F_1 (t_1) \otimes  F_2 (t_2) \otimes \dots \otimes F_n (t_n) = \vec{F} (t).
\end{equation}

To give an interpretation of multilinear operators in the weak sense we note that, when
$ \vec{f} = (f_1,f_2, \dots, f_n) $ and $ \vec{\varphi} = (\varphi_1, \varphi_2, \dots, \varphi_n) $,
$ f_j, \varphi _j   \in {\mathcal S} ^{(1)} (\mathbb{R}^d )$, $ j = 1,2,\dots, n,$
we put
\begin{equation} \label{STFTtensor-n}
R_{\vec{\varphi}} \vec{f} (x,\omega)
= \int_{\mathbb{R}^{nd}} \vec{f} (2x - t) \prod_{j=1} ^n e^{4\pi i \omega_j (t_j - x_j)}\varphi_j (t_j) dt,
\end{equation}
see also \eqref{product} for the notation.

\par

According to \eqref{STFTtensor-n}, $ \mathcal{M}^{p,q}_{s,t} (\mathbb{R}^{nd}) $ denotes the the set of
$ \vec{f} = (f_1,f_2, \dots, f_n), $ $ f_j \in {\mathcal S}' (\mathbb{R}^{d})$,  $ j = 1,2,\dots,n$, such that
$$
\| \vec{f} \|_{\mathcal{M}^{p,q}_{s,t}} \equiv \left  ( \int _{\mathbb{R}^{nd}} \left ( \int _{\mathbb{R}^{nd}}
| V_{\vec{\varphi}} \vec{f} (x,\omega)
\langle  x \rangle ^t
\langle \omega \rangle ^s|^p\, dx  \right )^{q/p}d\omega  \right )^{1/q}<\infty,
$$
where $ \vec{\varphi} = (\varphi_1, \varphi_2, \dots, \varphi_n) $,
$  \varphi _j   \in {\mathcal S}  (\mathbb{R}^d ) \setminus 0$, $ j = 1,2,\dots, n,$ is a given window function.

\par

The kernel theorem for $  {\mathcal S} (\mathbb{R}^{d})$ and $ {\mathcal S}' (\mathbb{R}^{d})$
(see \cite{Trev}) implies that there is an isomorphism between $ \mathcal{M}^{p,q}_{s,t} (\mathbb{R}^{nd}) $
and $M^{p,q}_{s,t}(\mathbb{R}^{nd})$ (which commutes with the operators from \eqref{trans-mod}). This allows us to identify
$ \vec{f} \in  \mathcal{M}^{p,q}_{s,t} (\mathbb{R}^{nd}) $ with (its isomorphic image) $ F \in M^{p,q}_{s,t}(\mathbb{R}^{nd})$ (and vice versa).

Next we give multilinear version of Definition \ref{locopdef}.

\begin{definition} \label{multlocopdef}
Let $f_j \in  {\mathcal S}(\mathbb{R}^d)$, $j=1,2,\dots,n$,
and $ \vec{f} = (f_1,f_2, \dots, f_n) $.
The {\em multilinear localization operator}\index{localization operator!multilinear} $\aaf $ with {\em symbol}
$a  \in  {\mathcal S} (\mathbb{R}^{2nd} )$ and {\em windows}
$$\varphi= (\varphi _1, \varphi _2, \dots, \varphi _n) \;\; \text{and} \;\;
\phi = (\phi_1, \phi_2, \dots, \phi _n), \;\;\; \varphi _j, \phi_j \in {\mathcal S}  (\mathbb{R}^d ), \;\;\; j = 1,2,\dots,n,
$$
is given by
$$
\aaf \vec{f} (t) =\int_{\mathbb{R}^{2nd} } a (x,\omega )
 \prod_{j=1} ^{n}\left (  R_{\check{\varphi _j}} f_j (\frac{x_j}{2},\frac{\omega_j}{2})
R (\check{\phi _j} (t)) (\frac{x_j}{2},\frac{\omega_j}{2}) \right )
\, dx d\omega,
$$
where
$x_j, \omega_j , t_j  \in \mathbb{R}^d, $ $ j=1,2,\dots, n,$ and
$ x = (x_1,x_2, \dots, x_n), $ $ \omega = (\omega_1 ,\omega_2 \dots, \omega_n ), $ $t = (t_1,t_2 \dots, t_n) $.
\end{definition}

Let $ \mathcal{R}$ denote the trace mapping that assigns to each function $F$
defined on  $ \mathbb{R}^{nd}$ a function defined on $ \mathbb{R}^{d}$ by the formula
$$
 \mathcal{R}: F \mapsto F \left | _{{t_1=t_2=\dots = t_n}} , \;\;\; t_j \in \mathbb{R}^{d}, \; j=1,2,\dots, n. \right .
$$
Then $  \mathcal{R} \aaf $ is the multilinear operator given in \cite[Definition 2.2]{CKasso}.

The approach  to multilinear localization operators related to  Weyl pseudodifferential operators is given in \cite{Teof2018,Teof2019}. Both Weyl and  Kohn-Nirenberg correspondences are particular cases of the
so-called $\tau-$pseudodifferential operators, $\tau \in [0,1]$ ($ \tau = 1/2$ gives Weyl operators, and $\tau = 0 $ we reveals Kohn-Nirenberg operators). We refer to \cite{Shubin91} for such operators,
and e.g. \cite{CDT, CNT, delgado, Toft-2017-b} for recent contributions in that context (see also the references given there).

The properties of such multilinear operators and their extension to Gelfand-Shilov spaces and their dual spaces
will be the subject of a separate contribution.

Furthermore, to obtain continuity properties  in the framework of quasianalytic Gelfand-Shilov spaces and corresponding dual spaces
analogous to those given in Section \ref{localization}, another techniques should be used, cf. \cite{CPRT2}.

Finally, we mention possible extension to quasi-Banach modulation spaces, when the Lebesgue parameters $p$ and $q$ are allowed to
take the values in $(0,1)$ as well. For such spaces and broad classes of pseudodifferential operators acting on them
we refer to \cite{Toft-2017-aa, Toft-2017-b}.

\begin{acknowledgement}
This research is supported by MPNTR of Serbia, projects no. 174024, and DS 028 (TIFMOFUS).
\end{acknowledgement}


\begin{thebibliography}{99}


\bibitem{AbreuDorfler12} Abreu, L.~D., D\"orfler, M.,
 An inverse problem for localization operators,
Inverse Problems \textbf{28 (11)}, 115001, 16 pp (2012)

\bibitem{BGHKasso}  B\'enyi, \'A., Gr{\"o}chenig, K.~H.~, Heil, C.~,  Okoudjou, K. A.,
Modulation spaces and a class of bounded multilinear pseudodifferential operators,
J. Operator Theory \textbf{54 (2)},  387-–399 (2005)

\bibitem{BKasso2004}  B\'enyi, \'A., Okoudjou,  K. A.,
Bilinear pseudodifferential operators on modulation spaces.
J. Fourier Anal. Appl. \textbf{10 (3)}, 301-–313 (2004)

\bibitem{BKasso2006} B\'enyi, \'A., Okoudjou,  K. A.,
Modulation space estimates for multilinear pseudodifferential operators.
Studia Math. \textbf{172 (2)},  169–-180 (2006)

\bibitem{Berezin71}
 Berezin, F.~A.,
 Wick and anti-Wick symbols of operators.
 Mat. Sb. (N.S.),  \textbf{86(128)}, 578--610 (1971)

\bibitem{BCG02}
Boggiatto, P., Cordero, E., Gr\"ochenig, K.,
Generalized {A}nti-{W}ick operators with symbols in distributional
{S}obolev spaces.
Integral Equations  Operator Theory \textbf{48}, 427--442 (2004)


\bibitem{BOW}
Boggiatto, P., Oliaro, A.,  Wong, M.~W.,
 $L^p$ boundedness and compactness of localization operators,
J. Math. Anal. Appl. \textbf{322 (1)}, 193–-206 (2006)


\bibitem{CalVai} Calder\'on, A.-P., Vaillancourt, R.,
 On the boundedness of pseudo-differential operators,
J. Math. Soc. Japan  \textbf{23}, 374–-378 (1971)


\bibitem{CGR-1} Cappiello, M., Gramchev, T., Rodino, L.,
Sub-exponential decay and uniform holomorphic extensions for
semilinear pseudodifferential equations, Communications in  Partial
Differential Equations \textbf{35}, 846--877   (2010)

\bibitem{CGR-2} Cappiello, M., Gramchev, T., Rodino, L.,
Entire extensions and exponential decay for semilinear elliptic
equations, Journal d'Analyse Math\'ematique  \textbf{111}, 339--367  (2010)

\bibitem{Elena07} Cordero, E., Gelfand--Shilov window classes for weighted modulation spaces.
Integral Transforms Spec. Funct.  \textbf{18 (11-12)}, 829--837 (2007)


\bibitem{CDT}
 Cordero, E., D'Elia L., Trapasso,  S. I.,
 Norm Estimates for $\tau$-pseudodifferential operators in Wiener amalgam and modulation spaces,
	 	arXiv:1803.07865.

\bibitem{CG02}
Cordero E., Gr\"ochenig, K.,
{T}ime-frequency analysis of localization operators,
Journal of Functional Analysis  \textbf{205(1)}, 107--131 (2003)


\bibitem{CorGro2006}
Cordero E., Gr\"ochenig, K.,
 Symbolic calculus and Fredholm property for localization operators,
J. Fourier Anal. Appl. \textbf{12 (4)}, 371–-392 (2006)


\bibitem{CN}
Cordero E., Nicola, F.,
 Sharp Integral Bounds for Wigner Distributions,
 International Mathematics Research Notices,  \textbf{2016 (00)}, 1-–29 (2016)


\bibitem{CNT}
Cordero, E., Nicola F., Trapasso,  S. I.,
 Almost diagonalization of  $\tau$-pseudodifferential operators with symbols in Wiener amalgam and modulation spaces,
 	 	 	arXiv:1802.10314.

\bibitem{CKasso}
Cordero, E., Okoudjou, K.~A.~,
 Multilinear localization operators,
 J. Math. Anal. Appl. \textbf{325 (2)}, 1103-–1116 (2007)



\bibitem{CPRT2}
Cordero, E.,  Pilipovi\'c, S., Rodino, L., Teofanov, N.,
Localization operators and exponential weights for modulation spaces,
Mediterranean Journal of Mathematics \textbf{2(4)}, 381--394 (2005)

\bibitem{CPRT1}
Cordero, E.,  Pilipovi\'c, S., Rodino, L., Teofanov, N.,
Quasianalytic Gelfand-Shilov spaces with application to localization operators,
Rocky Mountain Journal of Mathematics, \textbf{40 (4)}, 1123-–1147, (2010)

\bibitem{CCK1994} Chung, J., Chung, S.-Y., Kim, D.,
A characterization for Fourier hyperfunctions,
Publ. Res. Inst. Math. Sci. \textbf{30 (2)}, 203--208,  (1994)


\bibitem{CCK}
Chung, J., Chung, S.-Y., Kim, D.,
Characterization of the {G}elfand--{S}hilov spaces via {F}ourier transforms,
Proceedings of the American Mathematical Society  \textbf{124(7)}, 2101--2108 (1996)


\bibitem{Daube88} Daubechies, I.~,
 Time-frequency localization operators: a geometric phase space  approach.
 IEEE Trans. Inform. Theory, \textbf{34(4)}, 605--612 (1988)


\bibitem{deGosson2011}  de Gosson, M., Symplectic Methods in Harmonic Analysis and in Mathematical Physics, Birkh\"auser, 2011.

\bibitem{deGosson2016}  de Gosson, M.,  Born-Jordan Quantization, Theory and Applications, Springer (2016)

\bibitem{deGosson2017}  de Gosson, M.,  The Wigner Transform, World Scientific, London (2017)


\bibitem{delgado} Delgado, J., Ruzhansky, M., Wang, B., Approximation property and nuclearity on mixed-norm $L^p$, modulation and Wiener amalgam spaces. J. Lond. Math. Soc. \textbf{94 (2)}, 391--408  (2016)

\bibitem{Englis} Engli\v s,  M., Toeplitz operators and localization operators,
Trans. Amer. Math. Soc. \textbf{361 (2)}, 1039--1052 (2009)


\bibitem{Englis02} Engli\v s, M.,
An excursion into Berezin-Toeplitz quantization and related topics,
In: D. Bahns, W. Bauer, I. Witt (eds.)  Oper. Theory Adv. Appl. \textbf{251},  69--115, Birkh\"auser,  2016.

\bibitem{F1}  Feichtinger, H.~G.,
 Modulation spaces on locally
compact abelian groups,
 Technical Report, University Vienna, 1983. and also in
M. Krishna, R. Radha,  S. Thangavelu (eds.), Wavelets and Their Applications,
Allied Publishers, 99--140 (2003)


\bibitem{FG-Atom1}
Feichtinger, H.~G., Gr{\"o}chenig, K.,
Banach spaces related to integrable group representations and their
  atomic decompositions I,
J.\ Funct.\ Anal., \textbf{86(2)} 307--340 (1989)

\bibitem{FG-Atom2}
Feichtinger, H.~G., Gr{\"o}chenig, K.,
Banach spaces related to integrable group representations and their
  atomic decompositions II.
 Monatsh.\ f.\ Math. \textbf{108} 129--148 (1989)

\bibitem{FeiNowak2003}
Feichtinger, H.~G.,  Nowak, K.,
A first survey of Gabor multipliers, in H. Feichtinger, T. Strohmer (eds.)
Advances in Gabor Analysis, Birkh\"auser, 99--128 (2003)

\bibitem{FG2006} Fern\'andez, C., Galbis, A.
Compactness of time-frequency localization operators on $L^2 (\R^d)$, J. Funct. Anal. \textbf{233 (2)}, 335--350 (2006)

\bibitem{folland89}
Folland, G.~B.,  Harmonic Analysis in Phase Space, Princeton Univ. Press, Princeton, NJ (1989)

\bibitem{GS55}  Gelfand, I. M.,  Shilov, G. E.,
On a new method in uniqueness theorems for solution of Cauchy's problem for systems
of linear partial differential equations. (Russian) Dokl. Akad. Nauk SSSR (N.S.) \textbf{102}, 1065–-1068  (1955)

\bibitem{GS}  Gelfand, I. M.,  Shilov, G. E., Generalized Functions, Vol. II, III, Academic Press, New York, 1968,
reprinted by the AMS (2016)


\bibitem{GLPR}  Gramchev, T., Lecke, A., Pilipovi\'c, S., Rodino, L.,
Gelfand-Shilov type spaces through Hermite expansions, in
S. Pilipovi\'c, J. Toft (eds.) Pseudo-Differential Operators and Generalized Functions,
Operator Theory: Advances and Applications \textbf{245}, Birkh\"auser, Basel, 2015.

\bibitem{Gro} Gr{\"o}chenig, K.
Foundations of Time-frequency analysis, Birkh\"auser, Boston, 2001.

\bibitem{Gro07} Gr\"ochenig, K. H.,
Weight functions in time-frequency analysis.
In: Rodino, L., Schulze, B.-W., Wong M. W. (eds.) Pseudodifferential
Operators: Partial Differential Equations and Time-Frequency Analysis,
 Fields Institute Comm., \textbf{52} 343--366,  (2007)

\bibitem{Grochenig0} Gr{\"o}chenig, K.~H.~, Heil, C.,
Modulation spaces and pseudo-differential operators,
Integr. Equat. Oper. Th., \textbf{34}, 439--457 (1999)


\bibitem{Grochenig1b} Gr{\"o}chenig, K.~H.~, Heil, C.,
Modulation spaces as symbol classes for pseudodifferential operators. In:
Krishna, M., Radha, R., Thangavelu, S. (eds.) Wavelets and Their Applications,
151--170, Allied Publishers, Chennai (2003)


\bibitem{GZ01}
Gr{\"o}chenig K., Zimmermann, G.,
 Hardy's theorem and the short-time {F}ourier transform of {S}chwartz  functions,
 J. London Math. Soc., \textbf{63}, 205--214 (2001)


\bibitem{GZ} Gr{\"o}chenig K., Zimmermann, G.,
Spaces of test functions via the STFT,
Journal of Function Spaces and Applications \textbf{2}, 25--53   (2004)

\bibitem{Grossmann} Grossmann, A.,
Parity operator and quantization of d-functions,
Comm. Math. Phys. \textbf{48(3)}, 191--194 (1976)

\bibitem{KPP} Kami\'nski, A., Peri\v si\'c, D., Pilipovi\'c, S.,
On Various Integral Transformations of Tempered
Ultradistributions, Demonstratio Math., \textbf{33(3)}, 641-655  (2000)

\bibitem{K}  Komatsu, H., Ultradistributions I, structure
theorems and  a characterization, Journal of the Faculty of Science, Section I A, Mathematics, \textbf{20},  25--105 (1973)

\bibitem{L06}
Langenbruch, M.,  Hermite functions and weighted spaces of generalized functions,
Manuscripta Math., \textbf{119}, 269--285 (2006)


\bibitem{LMPSX} Lerner, N., Morimoto, Y.,  Pravda-Starov, K.,  Xu, C.-J.,
Gelfand--Shilov smoothing properties of the radially symmetric spatially
homogeneous Boltzmann equation without angular cutoff,
Journal of Differential Equations \textbf{256(2)}, 797-–831  (2014)


\bibitem{LCP} Lozanov--Crvenkovi\'c, Z., Peri\v si\'c, D.,
\newblock Hermite expansions of elements of Gelfand-Shilov spaces in quasianalytic and non quasianalytic case,
\newblock Novi Sad J. Math., \textbf{37 (2)}, 129--147 (2007)

\bibitem{LCPT} Lozanov--Crvenkovi\'c, Z., Peri\v si\'c, D.,  Taskovi\'c, M.,
\newblock Gelfand-Shilov spaces, Structural and Kernel theorems,
\newblock Preprint. arXiv:0706.2268v2

\bibitem{MOPf} Molahajloo, S.,  Okoudjou K. A., Pfander,  G. E.,
Boundedness of Multilinear Pseudodifferential Operators on Modulation Spaces, J. Fourier Anal. Appl.,  \textbf{22 (6)},
1381–-1415 (2016)


\bibitem{NR}  Nicola, F.,  Rodino, L., Global Pseudo-differential
calculus on Euclidean spaces, Pseudo-Differential Operators. Theory and Applications  \textbf{4}, {B}irkh{\"a}user Verlag, (2010)

\bibitem{Pe}  Petche, H.-J., Generalized functions and
the boundary values of holomorphic functions, J.\ Fac.\ Sci.\, The
University of Tokyo, Sec. IA, \textbf{31(2)}, 391--431  (1984)

\bibitem{P1} Pilipovi\'c, S., Tempered ultradistributions,
Bollettino della Unione Matematica Italiana, \textbf{7(2-B)}, 235--251 (1988)

\bibitem{PT1} Pilipovi\'c, S., Teofanov, N., Wilson bases
and ultra-modulation spaces, Math. Nachr., \textbf{242}, 179--196  (2002)


\bibitem{PT2} Pilipovi\'c, S., Teofanov, N.,
Pseudodifferential operators on ultra-modulation spaces
 J. Funct. Anal.,   \textbf{208}, 194--228 (2004)


\bibitem{PTT2} Pilipovi\'c, S., Teofanov, N., Toft, J.,
Micro-local analysis in Fourier Lebesgue and modulation spaces, II,
J. Pseudo-Differ. Oper. Appl., \textbf{1}, 341--376 (2010)


\bibitem{Prang} Prangoski, B.,
Pseudodifferential operators of infinite order in spaces of tempered ultradistributions,
Journal of Pseudo-Differential Operators and Applications, \textbf{4 (4)}, 495-–549 (2013)

\bibitem{Prang2} Prangoski, B., Laplace transform in spaces of ultradistributions,
Filomat,  \textbf{27(5)}, 747-–760 (2013)


\bibitem{RamTop93}  Ramanathan, J., Topiwala, P.,
 Time-frequency localization via the Weyl correspondence.
 SIAM J. Math. Anal., \textbf{24(5)}, 1378–-1393 (1993)

\bibitem{R}  Rodino, L., Linear Partial Differential Operators in Gevrey Spaces,
World Scientific (1993)

\bibitem{Royer}  Royer, A.,
Wigner function as the expectation value of a parity operator.
Phys. Rev. A, \textbf{15 (2)}, 449--450 (1977)

\bibitem{Svarc} Schwartz, L., Th\'eorie des distributions,
Hermann, Paris (1950-1951)


\bibitem{Shif} Shifman, M. (editor), Felix Berezin, Life and Death of the Mastermind of Supermathematics,
World Scientific, Singapore (2007)

\bibitem{Shubin91}
Shubin, M.~A.,  Pseudodifferential Operators and Spectral Theory,
Springer-Verlag, Berlin, second edition (2001)

\bibitem{Sjostrand1}  Sj{\"o}strand, J., An algebra of
pseudodifferential operators,  Math. Res. Lett., \textbf{1}, 185--192 (1994)


\bibitem{Slepian}
Slepian, D.,
Some comments on Fourier analysis, uncertainty and modeling,
SIAM Review, \textbf{25 (3)}, 379-–393  (1983)


\bibitem{T2} Teofanov, N., Ultradistributions and time-frequency analysis. In:
Boggiatto, P., Rodino, L., Toft, J., Wong, M. W. (eds.)  Oper. Theory Adv. Appl., \textbf{164}, pp. 173--191,
Birkh\"auser, Verlag  (2006)


\bibitem{T3} Teofanov, N.,
 Modulation spaces, Gelfand-Shilov spaces and pseudodifferential operators,
 Sampl. Theory Signal Image Process, \textbf{5 (2)}, 225--242 (2006)


\bibitem{Teof2015}  Teofanov, N.,
Gelfand-Shilov spaces and localization operators,
Funct. Anal. Approx. Comput. \textbf{7 (2)}, 135-–158  (2015)


\bibitem{Teof2016} Teofanov, N.,
Continuity and Schatten-von Neumann properties
for localization operators on modulation spaces,
Mediterr.  J. Math. \textbf{13 (2)}, 745--758  (2016)


\bibitem{Teof2018}  Teofanov, N.,
Bilinear Localization Operators on Modulation Spaces,
Journal of Function Spaces,  doi: 10.1155/2018/7560870
(2018)

\bibitem{Teof2019}  Teofanov, N.,
Continuity properties of multilinear localization operators on modulation spaces,
chapter in P. Boggiatto, E. Cordero, H. Feichtinger, M. de Gosson, F. Nicola, A. Oliaro, A. Tabacco (ed.), Landscapes of Time-frequency Analysis, in preparation.

\bibitem{Toft2} Toft, J., Continuity properties for
modulation spaces with applications to pseudo-differential calculus,
I,  J. Funct. Anal., \textbf{207}, 399--429 (2004)



\bibitem{To8} Toft, J., Continuity
properties for modulation spaces with applications to
pseudo-differential calculus, II,  Ann. Global Anal. Geom.,
\textbf{26}, 73--106  (2004)

\bibitem{Toft2007a}
Toft, J., Continuity and Schatten properties for pseudo-differential operators on modulation spaces.
In: Toft, J., Wong, M. W., Zhu, H. (eds.) Oper. Theory Adv. Appl. \textbf{172}, 173–-206, Birkh\"auser, Verlag (2007)


\bibitem{Toft2007b}
Toft, J.,   Continuity and Schatten properties for Toeplitz operators on modulation spaces,
In: Toft, J., Wong, M. W., Zhu, H. (eds.) Oper. Theory Adv. Appl. \textbf{172},  313-–328, Birkh\"auser, Verlag (2007)


\bibitem{Toft-2012}
Toft, J.,
The Bargmann transform on modulation and Gelfand-Shilov spaces,
with applications to Toeplitz and pseudo-differential operators,
J. Pseudo-Differ. Oper. Appl., \textbf{3 (2)}, 145-–227 (2012)

\bibitem{Toft-2} Toft, J.,
Multiplication properties in Gelfand-Shilov pseudo-differential calculus,
in S. Molahajloo, S. Pilipovi'c, J. Toft, M. W. Wong (eds.) Pseudo-differential operators, generalized functions and asymptotics,
Operator Theory: Advances and Applications \textbf{231}, 117--172, Birkh\"auser/Springer, Basel (2013)

\bibitem{Toft-2017} Toft, J., Images of function and distribution spaces under the Bargmann transform,
  J. Pseudo-Differ. Oper. Appl. \textbf{8 (1)}, 83--139 (2017)

\bibitem{Toft-2017-a} Toft, J., Matrix Parameterized Pseudo-differential Calculi on Modulation Spaces. In: Oberguggenberger M., Toft J., Vindas J., Wahlberg P. (eds) Generalized Functions and Fourier Analysis. Operator Theory: Advances and Applications, \textbf{260}, Birkh\"auser (2017)

\bibitem{Toft-2017-aa} Toft, J., Continuity and compactness for pseudo-differential operators with symbols in quasi-Banach spaces or H\"ormander classes, Anal. Appl. (Singap.) \textbf{15 (3)}, 353--389 (2017)

\bibitem{Toft-2017-b} Toft, J., Schatten properties, nuclearity and minimality of phase shift invariant spaces,
Applied and Computational Harmonic Analysis, doi.org/10.1016/j.acha.2017.04.003

\bibitem{TJPT2014}
Toft, J., Johansson, K., Pilipovi\' c, S., Teofanov, N.,
Sharp convolution and multiplication estimates in
weighted spaces, Analysis and Applications,
\textbf{13 (5)}, 457–-480 (2015)



\bibitem{TKNN2012}
Toft, J., Khrennikov, A., Nilsson, B., Nordebo, S.,
Decompositions of Gelfand-Shilov kernels into kernels of similar class,
 J. Math. Anal. Appl. \textbf{396 (1)}, 315-–322 (2012)


\bibitem{Trev} Treves, F., Topological Vector Spaces, Distributions and Kernels, Academic Press, New York (1967)

\bibitem{Weisz} Weisz, F.
Multiplier theorems for the short-time Fourier transform,
Integral Equations Operator Theory \textbf{60 (1)}, 133--149 (2008)


\bibitem{Wong1998} Wong,  M.~W.~,
 Weyl Transforms,  Springer-Verlag (1998)

\bibitem{Wong02} Wong, M.~W.,
Wavelet transforms and localization operators,
Operator Theory: Advances and Applications, \textbf{136},
Birkh\"auser Verlag, Basel (2002)

\end{thebibliography}
\end{document}